\documentclass[12pt]{amsart}
\usepackage{amsmath,amssymb,latexsym,esint,cite,mathrsfs}
\usepackage{verbatim,wasysym}
\usepackage[left=2.6cm,right=2.6cm,top=3cm,bottom=3cm]{geometry}
\makeatletter \@addtoreset{equation}{section} \makeatother

\setlength{\parindent}{1em}

\usepackage{graphicx}
\usepackage{subfigure}
\usepackage{graphicx,epic,eepic}

\usepackage{color,enumitem,graphicx}
\usepackage[colorlinks=true,urlcolor=blue,
citecolor=red,linkcolor=blue,linktocpage,pdfpagelabels,
bookmarksnumbered,bookmarksopen]{hyperref}


\numberwithin{equation}{section}
\newtheorem{theorem}{Theorem}[section]
\newtheorem{lemma}[theorem]{Lemma}

\newtheorem{proposition}[theorem]{Proposition}
\newtheorem{remark}[theorem]{Remark}
\newtheorem{corollary}[theorem]{Corollary}
\numberwithin{equation}{section}

\allowdisplaybreaks

\begin{document}

\title[Stability of Hardy-Sobolev inequality]
{Stability of Hardy-Sobolev inequality involving $p$-Laplace}

\author[S. Deng]{Shengbing Deng$^{\ast}$}
\address{\noindent Shengbing Deng (Corresponding author) \newline
School of Mathematics and Statistics, Southwest University,
Chongqing 400715, People's Republic of China}\email{shbdeng@swu.edu.cn}

\author[X. Tian]{Xingliang Tian}
\address{\noindent Xingliang Tian  \newline
School of Mathematics and Statistics, Southwest University,
Chongqing 400715, People's Republic of China.}\email{xltian@email.swu.edu.cn}

\thanks{$^{\ast}$ Corresponding author}

\thanks{2020 {\em{Mathematics Subject Classification.}} 35P30, 46E35, 35J62}

\thanks{{\em{Key words and phrases.}} Hardy-Sobolev inequality; $p$-Laplace problem; Non-degeneracy; Gradient stability; Remainder term}

\allowdisplaybreaks

\begin{abstract}
This paper is devoted to considering the following Hardy-Sobolev inequality
\[
\int_{\mathbb{R}^N}|\nabla u|^p \mathrm{d}x
\geq \mathcal{S}_\beta\left(\int_{\mathbb{R}^N}\frac{|u|^{p^*_\beta}}{|x|^{\beta}} \mathrm{d}x\right)^\frac{p}{p^*_\beta},\quad \forall u\in  C^\infty_0(\mathbb{R}^N),
\]
for some constant $\mathcal{S}_\beta>0$, where $1<p<N$, $0\leq \beta<p$, $p^*_\beta=\frac{p(N-\beta)}{N-p}$.
Firstly, since this problem involves quasilinear operator, we need to establish a compact embedding theorem for some suitable weighted spaces.  Moreover,  due to the Hardy term $|x|^{-\beta}$, some new estimates are established. Based on those works, we give the classification to the linearized problem related to the extremals which has its own interest such as in blow-up analysis. Then we investigate the gradient stability of above inequality by using spectral estimate combined with a compactness argument, which extends the work of Figalli and Zhang (Duke Math. J., 2022) to a weighted case.
\end{abstract}

\vspace{3mm}

\maketitle

\section{{\bfseries Introduction}}\label{sectir}

\subsection{Motivation}\label{subsectmot}
    Given $N\geq 2$ and $p\in (1,N)$, denote the homogeneous Sobolev space $\mathcal{D}^{1,p}_0(\mathbb{R}^N)$ be the closure of $C^\infty_0(\mathbb{R}^N)$ with respect to the norm
    \[
    \|u\|_{\mathcal{D}^{1,p}_0(\mathbb{R}^N)}
    :=\left(\int_{\mathbb{R}^N}|\nabla u|^p\mathrm{d}x\right)^\frac{1}{p}.
    \]
    The Sobolev inequality states as
    \begin{equation}\label{bzsi}
    \|\nabla u\|^p_{L^p(\mathbb{R}^N)}\geq \mathcal{S}\|u\|^p_{L^{p^*}(\mathbb{R}^N)},\quad \forall u\in \mathcal{D}^{1,p}_0(\mathbb{R}^N).
    \end{equation}
    for some $\mathcal{S}>0$, where $p^*:=\frac{pN}{N-p}$.
    It is well known that Aubin \cite{Au76} and Talenti \cite{Ta76} found the optimal constant and the extremals for inequality \eqref{bzsi}. Indeed, equality is achieved precisely by the functions (up to scalar multiplications)
    \begin{equation}\label{defvlz}
    V_{\lambda,z}(x):=\lambda^{\frac{N-p}{p}}V(\lambda(x-z)),\quad \mbox{for all}\quad \lambda>0,\quad z\in \mathbb{R}^N,
    \end{equation}
    where
    \begin{equation*}
    V(x)=\gamma_{N,p}(1+|x|^{\frac{p}{p-1}})^{-\frac{N-p}{p}},\quad \mbox{for some constant}\quad  \gamma_{N,p}>0, 
    \end{equation*}
    which solve the related Sobolev critical equation
    \begin{equation}\label{Ppwh}
    -\mathrm{div}(|\nabla u|^{p-2}\nabla u)= u^{p^*-1} ,\quad u>0 \quad \mbox{in}\quad \mathbb{R}^N,\quad u\in \mathcal{D}^{1,p}_0(\mathbb{R}^N).
    \end{equation}
    All the solutions to equation \eqref{Ppwh} are indeed the only ones of \eqref{defvlz}. Caffarelli et al. \cite{CGS89} proved the claim when $p=2$ without the restricted condition $u\in\mathcal{D}^{1,2}_0(\mathbb{R}^N)$. The case $p\neq 2$ has been firstly solved by Guedda and V\'{e}ron \cite{GV88} for the radial case, where the authors classified all the positive radial solutions and successively by Damascelli et al. \cite{DMMS14} when $\frac{2N}{N+2}\leq p<2$, by Damascelli and Ramaswamy \cite{DR01} and V\'{e}tois \cite{Ve16} when $1<p<2$ and finally by Sciunzi \cite{Sc16} for the remaining case, namely when $2<p<N$. And recently, Catino et al. \cite{CMR22} and Ou \cite{Ou22} proved the claim when $p\neq 2$ without the restricted condition $u\in\mathcal{D}^{1,p}_0(\mathbb{R}^N)$ for some special cases.

    Pistoia and Vaira \cite{PV21} proved that the solution $V$ of equation \eqref{Ppwh} is non-degenerate in the sense that all solutions of equation
    \begin{align}\label{Ppwhlp}
    & -{\rm div}(|\nabla V|^{p-2}\nabla \varphi)-(p-2){\rm div}(|\nabla V|^{p-4}(\nabla V\cdot\nabla \varphi)\nabla V)
    =\left(p^*-1\right)V^{p^*-2}\varphi,
    \end{align}
    in $\mathbb{R}^N$, $\varphi\in \mathcal{D}^{1,2}_{0,*}(\mathbb{R}^N)$, are linear combination of the functions
    \begin{equation*}
    Z_0(x)=\frac{N-p}{p}V+x\cdot \nabla V,\quad Z_i(x)=\frac{\partial V(x)}{\partial x_i},\quad i=1,\ldots,N.
    \end{equation*}
    Here $\mathcal{D}^{1,2}_{0,*}(\mathbb{R}^N)$ is the weighted Sobolev space which is defined as the completion of $C^\infty_c(\mathbb{R}^N)$ with respect to the norm
    \begin{align}\label{defd120*}
    \|\varphi\|_{\mathcal{D}^{1,2}_{0,*}(\mathbb{R}^N)}:=\left(\int_{\mathbb{R}^N}|\nabla V|^{p-2}|\nabla \varphi|^2\mathrm{d}x\right)^{\frac{1}{2}}.
    \end{align}
    We note that Pistoia and Vaira \cite{PV21} obtained the above conclusion by proving that $\mathcal{D}^{1,2}_{0,*}(\mathbb{R}^N)\hookrightarrow L^{2}_{0,*}(\mathbb{R}^N)$ continuously, where $L^{2}_{0,*}(\mathbb{R}^N)$ is the set of measurable functions $\varphi: \mathbb{R}^N\to \mathbb{R}$ whose norm
    \[
    \|\varphi\|_{L^{2}_{0,*}(\mathbb{R}^N)}:=\left(\int_{\mathbb{R}^N}
    V^{p^*-2}\varphi^2\mathrm{d}x\right)^{\frac{1}{2}}.
    \]
    Furthermore, Figalli and Neumayer \cite{FN19} proved that  $\mathcal{D}^{1,2}_{0,*}(\mathbb{R}^N)\hookrightarrow \hookrightarrow L^{2}_{0,*}(\mathbb{R}^N)$ compactly when $2\leq p<N$ then they showed the solutions of \eqref{Ppwhlp} in $L^{2}_{0,*}(\mathbb{R}^N)$ are linear combination of the functions
    $Z_0$ and $Z_i$ $(i=1,\ldots,N)$. Figalli and Zhang \cite{FZ22} proved that  $\mathcal{D}^{1,2}_{0,*}(\mathbb{R}^N)\hookrightarrow \hookrightarrow L^{2}_{0,*}(\mathbb{R}^N)$ compactly for all $1< p<N$ and the non-degenerate conclusion in \cite{FN19} also holds.

    Now, let us consider the stability of inequality (\ref{bzsi}). For $p=2$, Brezis and Lieb in \cite{BrL85} asked the question whether a remainder term - proportional to the quadratic distance of the function $u$ to be the manifold  $\mathcal{M}_0:=\{cV_{\lambda,z}: c\in\mathbb{R}, \lambda>0, z\in\mathbb{R}^N\}$ - can be added to the right hand side of (\ref{bzsi}). This question was answered affirmatively by Bianchi and Egnell \cite{BE91} by using spectral estimate combined with Lions' concentration and compactness principle (see \cite{Li85-2}), which reads that there exists constant $c_{\mathrm{BE}}>0$ such that
    \begin{equation}\label{defcbe}
    \int_{\mathbb{R}^N}|\nabla u|^2 \mathrm{d}x- \mathcal{S}\left(\int_{\mathbb{R}^N}|u|^{2^*} \mathrm{d}x\right)^{\frac{2}{2^*}}\geq c_{\mathrm{BE}} \inf_{v\in \mathcal{M}_0}\|\nabla u-\nabla v\|^2_{L^2(\mathbb{R}^N)}, \quad \forall u\in \mathcal{D}^{1,2}_0(\mathbb{R}^N).
    \end{equation}
    After that, this result was extended later to the biharmonic case by Lu and Wei \cite{LW00} and the case of arbitrary high order in \cite{BWW03}, and the whole fractional order case was proved in \cite{CFW13}.
    Furthermore, R\u{a}dulescu et. al \cite{RSW02} gave the remainder terms of Hardy-Sobolev inequality for exponent two.
    Wang and Willem \cite{WaWi03} studied Caffarelli-Kohn-Nirenberg inequalities (see \cite{CKN84}) with Lebesgue-type remainder terms, see also \cite{ACP05,DT23,ST18} for remainder terms of weighted Hardy inequalities.
    Recently, Wei and Wu \cite{WW22} established the stability of the profile decompositions to a special case of the Caffarelli-Kohn-Nirenberg inequality and also gave the gradient type remainder term. It is worth to mention some recent works,  Dolbeault et al. \cite{DEFS22} obtained  the lower bound estimate for sharp constant $c_{\mathrm{BE}}$ obtained in \eqref{defcbe}, and K\"{o}nig \cite{Ko22} gave its upper bound that is $c_{\mathrm{BE}}<4/(N+4)$, and  K\"{o}nig \cite{Ko22-2} proved that $c_{\mathrm{BE}}<2-2^{\frac{N-2}{N}}$ and there exist a minimizer for $c_{\mathrm{BE}}$ in $\mathcal{D}^{1,2}_0(\mathbb{R}^N)\setminus \mathcal{M}_0$.

    While for $p\neq 2$, it needs much delicate analysis to deal with the stability of inequality (\ref{bzsi}). Cianchi et al. \cite{CFMP09} first proved a stability version of Lebesgue-type for every $1<p<N$, Figalli and Neumayer \cite{FN19} proved the gradient stability for the Sobolev inequality when $p\geq 2$, Neumayer \cite{Ne20} extended the result in \cite{FN19} to all $1<p<N$. It is worth to mention that very recently, Figalli and Zhang \cite{FZ22} obtained the sharp stability of critical points of the Sobolev inequality (\ref{bzsi}) for all $1<p<N$ which reads
    \[
    \frac{\|\nabla u\|_{L^p(\mathbb{R}^N)}}{\|u\|_{L^{p^*}(\mathbb{R}^N)}}-\mathcal{S}^{\frac{1}{p}}
    \geq c_{\mathrm{FZ}} \inf_{v\in \mathcal{M}_0}\left(\frac{\|\nabla u-\nabla v\|_{L^p(\mathbb{R}^N)}}{\|\nabla u\|_{L^p(\mathbb{R}^N)}}\right)^{\max\{2,p\}},\quad \forall u\in \mathcal{D}^{1,p}_0(\mathbb{R}^N),
    \]
    for some constant $c_{\mathrm{FZ}}>0$,  
     furthermore the exponent $\max\{2,p\}$ is sharp, and this can be understood as a weak form of Bianchi-Egnell type
    \[
    \int_{\mathbb{R}^N}|\nabla u|^p \mathrm{d}x- \mathcal{S}\left(\int_{\mathbb{R}^N}|u|^{p^*} \mathrm{d}x\right)^{\frac{p}{p^*}}\geq c_{\mathrm{FZ}} \inf_{v\in \mathcal{M}_0}\|\nabla u-\nabla v\|_{L^p(\mathbb{R}^N)}^{\max\{2,p\}}.
    \]

\subsection{Problem setup and main results}\label{subsectmr}

    In present paper, we extend the works of Figalli and Zhang \cite{FZ22} to a weighted case, and R\u{a}dulescu et. al \cite{RSW02} to $p$-Laplace case, that is,
    we consider the gradient stability of the following Hardy-Sobolev inequality
\begin{align}\label{defhsi}
\int_{\mathbb{R}^N}|\nabla u|^p \mathrm{d}x
\geq \mathcal{S}_\beta\left(\int_{\mathbb{R}^N}\frac{|u|^{p^*_\beta}}{|x|^{\beta}} \mathrm{d}x\right)^\frac{p}{p^*_\beta},\quad \forall u\in \mathcal{D}^{1,p}_0(\mathbb{R}^N),
\end{align}
for some constant $\mathcal{S}_\beta>0$, where
    \begin{equation*}
    1<p<N,\quad 0<\beta<p,\quad p^*_{\beta}=\frac{p(N-\beta)}{N-p}.
    \end{equation*}
In \cite{GY00}, Ghoussoub and Yuan gave the optimal constant $\mathcal{S}_\beta$ and proved that the equality is achieved precisely by the functions (up to scalar multiplications):
    \begin{equation}\label{defvlzb}
    U_{\lambda}(x):=\frac{C_{N,p,\beta}\lambda^{\frac{N-p}{p}}}
    {(1+\lambda^{\frac{p-\beta}{p-1}}|x|^{\frac{p-\beta}{p-1}})
    ^{\frac{N-p}{p-\beta}}},
    \quad \mbox{where}\quad C_{N,p,\beta}=\left[(N-\beta)\left(\frac{N-p}{p-1}\right)^{p-1}
    \right]^{\frac{N-p}{p(p-\beta)}},
    \end{equation}
    for some $\lambda>0$.
    Moreover the functions $U_\lambda$ are the only radial solutions (up to scalings) of
    \begin{equation*}
    -\mathrm{div}(|\nabla u|^{p-2}\nabla u)=\frac{u^{p^*_{\beta}-1}}{|x|^{\beta}},\quad u>0 \quad \mbox{in}\quad \mathbb{R}^N.
    \end{equation*}
    Inspired by the work of Pistoia and Vaira \cite{PV21}, our first result concerns the non-degeneracy property of the solution $U:=U_1$ in suitable space. Let us
    denote the weighted Sobolev space $L^2_{\beta,*}(\mathbb{R}^N)$ as the set of measurable functions with respect to  the norm
    \begin{equation}\label{defd12*na}
    \|\varphi\|_{L^{2}_{\beta,*}(\mathbb{R}^N)}:=\left(\int_{\mathbb{R}^N}
    |x|^{-\beta} U^{p^*_{\beta}-2}\varphi^2\mathrm{d}x\right)^{\frac{1}{2}}.
    \end{equation}
    Denote also $C^1_{c,0}(\mathbb{R}^N)$  be the space of compactly supported functions of class $C^1$ that are zero in a neighborhood of the origin, then we define the weighted Sobolev space $\mathcal{D}^{1,2}_{0,*}(\mathbb{R}^N)$ as the completion of $C^1_{c,0}(\mathbb{R}^N)$ with respect to the norm
    \begin{equation}\label{defd12*nb}
    \|u\|_{\mathcal{D}^{1,2}_{0,*}(\mathbb{R}^N)}:=\left(\int_{\mathbb{R}^N}|\nabla U|^{p-2} |\nabla u|^2  \mathrm{d}x\right)^{\frac{1}{2}},
    \end{equation}
    and note that this norm is equivalent to \eqref{defd120*} replacing $V$ by $U$.

    \begin{remark}\label{remdefsn2}\rm
    As stated in \cite[Remark 3.1]{FZ22}, it is important for us to consider weighted that are not necessarily integrable at the origin, since $|\nabla U|^{p-2}\sim |x|^{\frac{(p-2)(1-\beta)}{p-1}}\not\in L^1(B_1)$ for $p\leq \frac{N+2(1-\beta)}{N+1-\beta}$. This is why, when defining weighted Sobolev spaces, we consider the space $C^1_{c,0}(\mathbb{R}^N)$, so that gradients vanish near zero. Of course, replacing $C^1(\mathbb{R}^N)$ by $C^1_{c,0}(\mathbb{R}^N)$ plays no role in the case $p> \frac{N+2(1-\beta)}{N+1-\beta}$.
    \end{remark}

    We first give a crucial compact embedding result as follows.
    \begin{proposition}\label{propcet}
    Suppose $1<p<N$ and $0<\beta<p$. The space $\mathcal{D}^{1,2}_{0,*}(\mathbb{R}^N)$ compactly embeds into $L^2_{\beta,*}(\mathbb{R}^N)$.
    \end{proposition}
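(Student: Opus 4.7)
The plan is to reduce compactness to two ingredients: classical Rellich--Kondrachov on annular regions away from the origin and infinity, together with tightness estimates near the origin and near infinity. The tightness estimates will follow from a careful comparison of the two weights appearing in the norms, which behave like explicit pure powers of $|x|$ in each regime. From \eqref{defvlzb} one computes $U(x)\sim 1$ and $|\nabla U(x)|\sim |x|^{(1-\beta)/(p-1)}$ as $|x|\to 0$, and $U(x)\sim |x|^{-(N-p)/(p-1)}$ and $|\nabla U(x)|\sim |x|^{-(N-1)/(p-1)}$ as $|x|\to\infty$. A short bookkeeping then shows that, in the classical Hardy ansatz $\int|x|^{L}u^{2}\leq C\int|x|^{L+2}|\nabla u|^{2}$, the actual source exponent produced by $|\nabla U|^{p-2}$ exceeds what Hardy requires by exactly $\tfrac{p-\beta}{p-1}>0$ at infinity, while at the origin the target exponent $-\beta$ exceeds Hardy's left-hand exponent by the same slack; this gap is the source of the vanishing constants below.

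First I would establish the local tightness estimate at the origin: for $u\in C^{1}_{c,0}(\mathbb{R}^N)$ and $\rho>0$ small,
\[
\int_{B_\rho}|x|^{-\beta}U^{p^*_\beta-2}u^{2}\,\mathrm{d}x \;\leq\; C\,\rho^{\frac{p-\beta}{p-1}}\int_{B_\rho}|\nabla U|^{p-2}|\nabla u|^{2}\,\mathrm{d}x.
\]
The argument uses the pointwise bound $|x|^{-\beta}\leq\rho^{(p-\beta)/(p-1)}|x|^{L}$ on $B_\rho$ with $L=-[p+\beta(p-2)]/(p-1)<-\beta$, then applies a weighted Caffarelli--Kohn--Nirenberg Hardy inequality with exponent difference equal to $2$, after which the resulting weight $|x|^{L+2}=|x|^{(p-2)(1-\beta)/(p-1)}$ matches the asymptotic of $|\nabla U|^{p-2}$ near $0$. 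By the symmetric argument at infinity, combining Hardy's inequality $\int|x|^{M}u^{2}\leq C\int|x|^{M+2}|\nabla u|^{2}$ (with $M=[\beta-(p-2)N-2p]/(p-1)$, the exponent of $|x|^{-\beta}U^{p^*_\beta-2}$ at infinity) with the pointwise bound $|x|^{M+2}\leq R^{-(p-\beta)/(p-1)}|\nabla U|^{p-2}$ on $\mathbb{R}^N\setminus B_R$, one obtains
\[
\int_{\mathbb{R}^N\setminus B_R}|x|^{-\beta}U^{p^*_\beta-2}u^{2}\,\mathrm{d}x \;\leq\; C\,R^{-\frac{p-\beta}{p-1}}\int_{\mathbb{R}^N\setminus B_R}|\nabla U|^{p-2}|\nabla u|^{2}\,\mathrm{d}x.
\]
Both estimates extend to $\mathcal{D}^{1,2}_{0,*}(\mathbb{R}^N)$ by density, and in particular they already yield the continuous embedding $\mathcal{D}^{1,2}_{0,*}\hookrightarrow L^{2}_{\beta,*}$.

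The second ingredient is that on any fixed annulus $A_{\rho,R}=\{\rho<|x|<R\}$ both weights $|\nabla U|^{p-2}$ and $|x|^{-\beta}U^{p^*_\beta-2}$ are bounded above and below by positive constants, so the $\mathcal{D}^{1,2}_{0,*}$-norm controls the $H^{1}(A_{\rho,R})$-norm (the $L^{2}$ part coming from the continuous embedding just obtained), while the $L^{2}_{\beta,*}$-norm is equivalent to the $L^{2}(A_{\rho,R})$-norm. Classical Rellich--Kondrachov then gives that every bounded sequence in $\mathcal{D}^{1,2}_{0,*}$ admits a subsequence converging strongly in $L^{2}_{\beta,*}(A_{\rho,R})$. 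Given $u_n\rightharpoonup u$ weakly in $\mathcal{D}^{1,2}_{0,*}$, I would pass to $v_n=u_n-u$, fix $\varepsilon>0$, choose $\rho$ small and $R$ large via the two tightness estimates so that both tail integrals are less than $\varepsilon/3$ uniformly in $n$, and conclude by extracting along a diagonal subsequence using Rellich on $A_{\rho,R}$.

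The main technical obstacle is to justify the weighted Caffarelli--Kohn--Nirenberg Hardy inequalities with the specific exponents $L$ and $M$ uniformly over the full range $1<p<N$, $0<\beta<p$, and to deal with any sub-range in which the dimensional condition for the Hardy constant to be finite happens to fail (where an alternative argument combining the direct integrability of $|x|^{-\beta}U^{p^*_\beta-2}$ with the ambient Hardy--Sobolev inequality \eqref{defhsi} would be needed). A subtle but essential point is that the integrals against $|\nabla U|^{p-2}$ near the origin are well defined only because test functions are taken in $C^{1}_{c,0}(\mathbb{R}^N)$ and hence vanish in a neighborhood of $0$, which is precisely the role of the modified definition of $\mathcal{D}^{1,2}_{0,*}$ emphasized in Remark~\ref{remdefsn2}.
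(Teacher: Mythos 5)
Your overall architecture (smallness of the two tails plus Rellich--Kondrachov on annuli and a diagonal extraction) is exactly the paper's; the gap lies in how you derive the tail estimates. The origin estimate as you state it, with both integrals restricted to $B_\rho$, is false whenever $p>2$: take $u_\epsilon\in C^1_{c,0}(\mathbb{R}^N)$ with $u_\epsilon\equiv 0$ on $B_\epsilon$, $u_\epsilon\equiv 1$ on $B_\rho\setminus B_{2\epsilon}$, $|\nabla u_\epsilon|\lesssim \epsilon^{-1}$ in the transition annulus, and any fixed extension outside $B_\rho$. The left-hand side stays bounded below by $c\rho^{N-\beta}$, while $\int_{B_\rho}|\nabla U|^{p-2}|\nabla u_\epsilon|^2\lesssim \epsilon^{\,N-2+\frac{(p-2)(1-\beta)}{p-1}}\to 0$, since the exponent equals $\frac{(N-2)(p-1)+(p-2)(1-\beta)}{p-1}\ge N-p>0$; for $p>2$ the weight $|\nabla U|^{p-2}$ degenerates at the origin, so a gradient concentrated there is invisible to your right-hand side. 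Replacing $\int_{B_\rho}$ by $\int_{\mathbb{R}^N}$ on the right (which is all the compactness argument needs, and is what the paper's \eqref{continouseb1} asserts) does not rescue the derivation: the global Hardy inequality $\int|x|^{L}u^2\le C\int|x|^{L+2}|\nabla u|^2$ requires dominating $|x|^{L+2}=|x|^{(p-2)(1-\beta)/(p-1)}$ by $|\nabla U|^{p-2}$ on all of $\mathbb{R}^N$, and at infinity $|\nabla U|^{p-2}\sim|x|^{-(N-1)(p-2)/(p-1)}$, so the domination fails for every $p>2$ (it would need $\beta\ge N$); and the integration-by-parts proof of Hardy localized to $B_\rho$ works only when $N+L<0$, which never holds for $p\ge 2$. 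The estimate at infinity has the symmetric defect: the exterior Hardy inequality with exponent $M$ and no boundary condition on $\partial B_R$ requires $N+M>0$, i.e.\ $N+\beta>2p$, which fails on a large part of the admissible range (for instance whenever $2p>N+\beta$, e.g.\ $N=2$, $p$ close to $2$), while the global version needs a pointwise domination near the origin that is likewise unavailable in general (already for $p=2$, $\beta<2$ one has $|x|^{M+2}=|x|^{\beta-2}\to\infty$ against $|\nabla U|^{p-2}\equiv 1$). The degenerate hypersurfaces $N+L=0$, $N+M=0$ that you flag are, by comparison, harmless.

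So the matched-power Hardy route is sound only on part of the range $1<p<2$ and collapses precisely where the statement must still hold; note that the target estimates themselves are true, it is your derivation that breaks. The paper gets them with more robust tools: the global Poincar\'e inequality \eqref{continouseb} comes from the second variation of the Hardy--Sobolev quotient at $U$; the origin tail \eqref{continouseb1} from a weighted Sobolev inequality (Hardy--Sobolev, resp.\ Maz'ya's radial-weight Sobolev inequality) giving an $L^q$ gain with $q>2$ and then H\"older on $B_\rho$, so the smallness comes from the shrinking domain and is insensitive to gradient concentration; and the infinity tail \eqref{continouseb2} from a logarithmic cutoff combined with a Fubini/Cauchy--Schwarz radial argument, which yields only a $|\log\rho|^{-2}$ gain, but that suffices for the diagonal argument. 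To salvage your plan you would need, at the least, localized Hardy inequalities with explicit boundary or annulus remainder terms and a mechanism (absorption, or local strong convergence) to control them uniformly; as written, that essential work is missing.
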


    Based on above compact embedding theorem, we will prove that $U$ is non-degenerate in $L^2_{\beta,*}(\mathbb{R}^N)$. This leads to study the problem:
    \begin{align}\label{Ppwhla}
    -{\rm div}(|\nabla U|^{p-2}\nabla v)-(p-2){\rm div}(|\nabla U|^{p-4}(\nabla U\cdot\nabla v)\nabla U)
        = (p^*_{\beta}-1)|x|^{-\beta} U^{p^*_{\beta}-2}v,
    \end{align}
    in $\mathbb{R}^N$, $v\in L^2_{\beta,*}(\mathbb{R}^N)$.

    \begin{theorem}\label{coroPpwhlpa}
    Suppose $1<p<N$ and $0<\beta<p$. Then the space of solutions of problem (\ref{Ppwhla}) has dimension $1$ and is spanned by $(\frac{N-p}{p}U+x\cdot \nabla U)$.
    \end{theorem}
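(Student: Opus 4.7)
The plan is to adapt the strategy of Pistoia--Vaira \cite{PV21} and Figalli--Zhang \cite{FZ22} to the weighted setting. By Proposition \ref{propcet}, the quadratic form on the left of \eqref{Ppwhla} -- equivalent to $\|v\|^2_{\mathcal{D}^{1,2}_{0,*}(\mathbb{R}^N)}$ via the elementary bound $\min(1,p-1)|\xi|^2 \leq |\xi|^2 + (p-2)(\hat e\cdot \xi)^2 \leq \max(1,p-1)|\xi|^2$ -- defines a self-adjoint operator with compact resolvent on $L^2_{\beta,*}(\mathbb{R}^N)$, so the solution space of \eqref{Ppwhla} is finite-dimensional. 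Since $U$ is radial, this operator commutes with rotations of $\mathbb{R}^N$; decomposing $v(x) = \sum_{k\geq 0}\psi_k(r)Y_k(\theta)$ along spherical harmonics with $-\Delta_{S^{N-1}}Y_k = k(k+N-2)Y_k$ decouples the problem into the family of radial Sturm--Liouville equations
\begin{equation*}
-\frac{p-1}{r^{N-1}}\bigl(r^{N-1}|U'|^{p-2}\psi_k'\bigr)' + \frac{k(k+N-2)}{r^2}|U'|^{p-2}\psi_k = (p^*_\beta-1)\,r^{-\beta}U^{p^*_\beta-2}\psi_k.
\end{equation*}
It is enough to show that the $k=0$ problem has a one-dimensional weighted solution space spanned by $Z_0 := \frac{N-p}{p}U + x\cdot\nabla U$ and that the $k\geq 1$ problems admit no nontrivial solution.

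For $k=0$, differentiating the scaling family $U_\lambda(x) = \lambda^{(N-p)/p}U(\lambda x)$ at $\lambda=1$ produces $Z_0$ as an explicit solution. Since the radial equation is linear of second order, any second independent local solution can be obtained by reduction of order (a Wronskian formula), and an asymptotic expansion at $r=0$ and $r=\infty$ using the explicit form \eqref{defvlzb} of $U$, together with the behaviour $|\nabla U|^{p-2}\sim r^{(p-2)(1-\beta)/(p-1)}$ near the origin, shows that this second solution fails the weighted integrability required to belong to $\mathcal{D}^{1,2}_{0,*}(\mathbb{R}^N)$. Hence the $k=0$ eigenspace is exactly $\mathbb{R}\,Z_0$.

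For the non-radial modes the key new feature is that $\beta>0$ breaks the translation invariance present when $\beta=0$: substituting the pure-Sobolev candidate $\psi(r)=U'(r)$ into the $k=1$ equation produces a residual term proportional to $\beta\,r^{-\beta-1}U^{p^*_\beta-1}$, so $\partial_i U$ are no longer solutions once $\beta>0$. To rule out every $k\geq 1$ solution I would pass to Emden--Fowler-type coordinates $r = e^t$, $\psi(r) = r^{-\alpha}w(t)$ (with $\alpha$ dictated by the scaling symmetry of the Hardy--Sobolev equation), turning the mode-$k$ ODE into an autonomous problem on $\mathbb{R}$ whose spectrum can be computed essentially in closed form along the lines of \cite[\S4]{FZ22}, and then verify that the value $p^*_\beta-1$ appears in this spectrum only at $k=0$.

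The main obstacle will be the quantitative spectral comparison at $k\geq 1$: in the unweighted limit $\beta=0$ the principal $k=1$ eigenvalue sits exactly at $p^*-1$, producing the $N$ translation modes, so one must track the strict separation induced by $\beta>0$ -- either through an explicit diagonalization in Emden--Fowler coordinates (reducing to a Jacobi/hypergeometric spectral problem) or via a coercive identity that uses $\beta>0$ as the positivity parameter. The $k=0$ Wronskian computation and the initial reduction via Proposition \ref{propcet} are largely routine by comparison.
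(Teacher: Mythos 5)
Your setup is sound: the spherical-harmonic decomposition is exactly the paper's first step, and the radial Sturm--Liouville equation you write is equivalent to the paper's system \eqref{Ppwhl2p2tpy} (after dividing by $|U'|^{p-2}$ the coefficients agree). Your $k=0$ plan (scaling derivative gives $Z_0$; reduction of order plus asymptotics excludes the second solution) would also work, although note that the admissibility class in \eqref{Ppwhla} is $L^2_{\beta,*}(\mathbb{R}^N)$ (the paper's class $\mathcal{W}$), not $\mathcal{D}^{1,2}_{0,*}(\mathbb{R}^N)$, so the integrability check must be done against the $L^2_{\beta,*}$ weight.

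The genuine gap is the exclusion of all modes $k\geq 1$, which is the actual content of the theorem, and which you explicitly leave as ``the main obstacle'' with two candidate strategies (Emden--Fowler diagonalization or a coercivity identity) but no execution. A proof must produce the strict spectral separation, not posit it. The paper closes this step with a short, concrete device that your sketch misses: the change of variable $r=s^{q}$ with $q=p/(p-\beta)$ transforms the mode-$k$ equation \eqref{Ppwhl2p2tpy} into the linearized equation \eqref{Ppwhl2p2tp} of the \emph{unweighted} $p$-Laplace bubble in fractional dimension $K=\frac{p(N-\beta)}{p-\beta}$, with angular parameter $q^{2}\lambda_k$ in place of $\lambda_k$. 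The known non-degeneracy spectrum for that problem (valid for non-integer $K$, as in \cite{FN19,PV21}) says admissible solutions exist only when $q^{2}\lambda_k\in\{0,\,K-1\}$, and then the elementary inequality
\begin{equation*}
0<\left(\frac{p-\beta}{p}\right)^{2}\left[\frac{p(N-\beta)}{p-\beta}-1\right]<N-1=\lambda_1,\qquad 0<\beta<p,
\end{equation*}
shows that $q^{2}\lambda_k=K-1$ is impossible for every $k\geq 1$; this is precisely the quantitative statement you flagged but did not prove. Without either this reduction (or an equivalent closed-form spectral computation, e.g. verifying that the candidate $k=1$ eigenfunction $\eta_1$ in \eqref{Ppwhl2ptevp} corresponds to the angular value $K-1$ which $\beta>0$ pushes strictly below $q^2\lambda_1$), your argument does not establish that the solution space is one-dimensional, so the proposal as written is incomplete at its decisive point. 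A secondary remark: your observation that the residual of $\psi=U'$ in the $k=1$ equation is ``proportional to $\beta$'' only shows $\partial_i U$ is not a solution; it does not rule out some other nontrivial $k\geq 1$ solution, which is what is needed.
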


    Next is the main result of the present paper, which is the gradient stability of inequality (\ref{defhsi}).
    \begin{theorem}\label{thmprtp}
    Suppose $1<p<N$ and $0<\beta<p$. Then there exists constant $\mathcal{B}=\mathcal{B}(N,p,\beta)>0$ such that for every $u\in \mathcal{D}^{1,p}_0(\mathbb{R}^N)$, it holds that
    \begin{align}\label{hsrt}
    \int_{\mathbb{R}^N}|\nabla u|^p \mathrm{d}x
    -\mathcal{S}_\beta\left(\int_{\mathbb{R}^N}
    \frac{|u|^{p^*_\beta}}{|x|^{\beta}} \mathrm{d}x\right)^\frac{p}{p^*_\beta}
    \geq \mathcal{B}\inf_{v\in \mathcal{M}_\beta}
    \|\nabla u-\nabla v\|_{L^p(\mathbb{R}^N)}^\gamma,
    \end{align}
    where $\gamma={\max\{p,2\}}$, $\mathcal{M}_\beta:=\{cU_{\lambda}: c\in\mathbb{R}, \lambda>0\}$ is the set of extremal functions for Hardy-Sobolev inequality \eqref{defhsi}.
    \end{theorem}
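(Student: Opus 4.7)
The plan is to follow the Bianchi--Egnell paradigm in its $p$-Laplace incarnation developed by Figalli and Zhang \cite{FZ22}, now transplanted to the weighted Hardy--Sobolev setting. I will argue by contradiction: suppose \eqref{hsrt} fails along a sequence $u_n\in\mathcal{D}^{1,p}_0(\mathbb{R}^N)$, normalized by $\|\nabla u_n\|_{L^p}=1$, whose deficit
\[
\delta(u_n):=\int_{\mathbb{R}^N}|\nabla u_n|^p\,dx -\mathcal{S}_\beta\Bigl(\int_{\mathbb{R}^N}|x|^{-\beta}|u_n|^{p^*_\beta}\,dx\Bigr)^{p/p^*_\beta}
\]
tends to zero while $\delta(u_n)/d_n^\gamma\to 0$, with $d_n:=\inf_{v\in\mathcal{M}_\beta}\|\nabla u_n-\nabla v\|_{L^p}$. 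Because the Hardy weight $|x|^{-\beta}$ destroys translation invariance, $\mathcal{M}_\beta$ is generated only by scaling and scalar multiplication, and a Lions-type concentration-compactness argument tailored to $\mathcal{S}_\beta$ (without a translation parameter) will rule out both vanishing and dichotomy; combined with the dilation invariance $u\mapsto\lambda^{(N-p)/p}u(\lambda\cdot)$, this reduces me, after rescaling and passing to a subsequence, to $u_n\to U$ strongly in $\mathcal{D}^{1,p}_0(\mathbb{R}^N)$.

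Working in this neighborhood of $\mathcal{M}_\beta$, I will pick $(c_n,\lambda_n)$ realizing $d_n$ and write (after rescaling by $\lambda_n$) $u_n=c_nU+\varphi_n$, with $\varphi_n$ small and -- by a suitable choice of $c_n$ and $\lambda_n$ -- orthogonal to the two tangent directions $U$ and $Z:=\tfrac{N-p}{p}U+x\cdot\nabla U$ in the inner product associated to \eqref{defd12*nb}. A pointwise convexity expansion of $|\xi|^p$ around $c_n\nabla U$ then gives
\[
\int_{\mathbb{R}^N}|\nabla(c_nU+\varphi_n)|^p\,dx \geq c_n^p\int_{\mathbb{R}^N}|\nabla U|^p\,dx + \tfrac{p}{2}c_n^{p-2}Q_1(\varphi_n) + (\text{linear term}) - R_1,
\]
where
\[
Q_1(\varphi):=\int_{\mathbb{R}^N}|\nabla U|^{p-2}|\nabla\varphi|^2\,dx + (p-2)\int_{\mathbb{R}^N}|\nabla U|^{p-4}(\nabla U\cdot\nabla\varphi)^2\,dx,
\]
while a matching upper expansion of the Hardy term introduces $Q_2(\varphi):=\int_{\mathbb{R}^N}|x|^{-\beta}U^{p^*_\beta-2}\varphi^2\,dx$. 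The linear contributions cancel by the Euler--Lagrange equation for $U$ together with the orthogonality of $\varphi_n$, so $\delta(u_n)$ is bounded below by a positive multiple of $\mathcal{Q}(\varphi_n):=Q_1(\varphi_n)-(p^*_\beta-1)Q_2(\varphi_n)$ minus remainders.

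For the spectral step I will invoke Theorem \ref{coroPpwhlpa} and Proposition \ref{propcet}: the only directions on which $\mathcal{Q}$ fails to be positive are $U$ (on which $\mathcal{Q}(U)=(p-p^*_\beta)\int|\nabla U|^p<0$, absorbed by the choice of $c_n$) and $Z$ (on which $\mathcal{Q}(Z)=0$, absorbed by the choice of $\lambda_n$), and the compact embedding promotes this into a uniform spectral gap $\eta>0$ on the orthogonal complement, yielding $\mathcal{Q}(\varphi_n)\ge\eta\|\varphi_n\|^2_{\mathcal{D}^{1,2}_{0,*}(\mathbb{R}^N)}$. Conversion to the $L^p$-gradient distance is then routine: for $p\ge 2$ a H\"older splitting of $\mathbb{R}^N$ into $\{|\nabla\varphi_n|\lesssim|\nabla U|\}$ and its complement gives $\|\varphi_n\|^2_{\mathcal{D}^{1,2}_{0,*}}\gtrsim\|\nabla\varphi_n\|_{L^p}^p$ (hence $\gamma=p$), while for $1<p<2$ the reverse H\"older yields the natural exponent $\gamma=2$.

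The hardest part will be the quantitative control of the remainders $R_1,R_2$, especially in the sub-quadratic regime $1<p<2$ where $\xi\mapsto|\xi|^p$ lacks $C^2$ regularity and the $p$-Dirichlet expansion degenerates wherever $c_n\nabla U+\nabla\varphi_n$ nearly vanishes. Following \cite{FZ22}, I plan to split $\mathbb{R}^N$ into $\{|\nabla\varphi_n|\lesssim|c_n\nabla U|\}$ and its complement, applying a Clarkson-type pointwise inequality on the first region and pure convexity on the second, and then balancing the resulting powers of $\|\nabla\varphi_n\|_{L^p}$ against the spectral-gap estimate. The additional technical burden here is to carry the Hardy weight $|x|^{-\beta}$ through each of these estimates, using in particular the delicate integrability of $|\nabla U|^{p-2}$ near the origin highlighted in Remark \ref{remdefsn2}, which is precisely why the weighted space $\mathcal{D}^{1,2}_{0,*}(\mathbb{R}^N)$ is defined as the completion of $C^1_{c,0}(\mathbb{R}^N)$.
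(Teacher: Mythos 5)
Your skeleton is the paper's: argue by contradiction, use Lions' concentration--compactness to reduce to the case $d_n\to 0$, expand around the nearest point $c_nU_{\lambda_n}$ with the FZ-type algebraic inequalities, and close with a spectral gap coming from Theorem \ref{coroPpwhlpa} and the compactness of Proposition \ref{propcet}, treating $1<p<2$ by the Figalli--Zhang splitting. However, one step in your outline, as stated, fails: for $p>2$ you extract the exponent $\gamma=p$ from the claimed bound $\|\varphi_n\|_{\mathcal{D}^{1,2}_{0,*}(\mathbb{R}^N)}^2\gtrsim \|\nabla\varphi_n\|_{L^p(\mathbb{R}^N)}^p$, and no H\"older splitting gives this. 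On $\{|\nabla\varphi_n|\le |\nabla U|\}$ the weighted norm does dominate $|\nabla\varphi_n|^p$, but on the complementary region it does not, and since $|\nabla U(x)|^{p-2}\sim |x|^{-\frac{(N-1)(p-2)}{p-1}}\to 0$ as $|x|\to\infty$, a fixed bump translated far from the origin (corrected by small multiples of the two tangent directions to stay orthogonal) has vanishing $\mathcal{D}^{1,2}_{0,*}$ norm but $\|\nabla\varphi\|_{L^p}$ of order one. In the paper the term $d_n^p$ is not produced by the spectral gap at all: it comes from the coercive $p$-th order remainder $\mathcal{C}_1|y|^p$ in the strict convexity inequality \eqref{uinb2p} of Lemma \ref{lemui1p}, which survives in the expansion \eqref{epknug} as $\mathcal{C}_1 d_n^p\|\nabla w_n\|_{L^p}^p=\mathcal{C}_1 d_n^p$, while the (perturbed, $|\omega_n|^{p-2}$-weighted) gap of Lemma \ref{lemsgap} is used only to show that the order-$d_n^2$ contributions of the two expansions combine to something nonnegative. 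So you must retain that $|y|^p$ term rather than absorb it into your remainder $R_1$; discarding it leaves you with an estimate that cannot reach the sharp exponent.

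A second point you gloss over is where the genuinely new weighted work lies. When $1<p\le\frac{2N}{N+2-\beta}$ (so $p^*_\beta\le 2$), the quadratic potential $\int_{\mathbb{R}^N}|x|^{-\beta}U^{p^*_\beta-2}\varphi_n^2\,\mathrm{d}x$ need not be finite for $\varphi_n\in\mathcal{D}^{1,p}_0(\mathbb{R}^N)$, so the upper expansion of the Hardy term must be written with the perturbed potential of Lemma \ref{lemui1p*l}, and the spectral gap must be re-proved for that perturbed quantity. This is exactly what forces the paper to establish the Orlicz-type compactness Lemma \ref{propcetl}, the weighted Poincar\'e inequality of Corollary \ref{propcetlpi}, the Hardy--Poincar\'e inequality of Lemma \ref{lem:A-hpiw}, and the improved gap of Lemma \ref{lemsgap2}$(i)$; ``carrying the weight $|x|^{-\beta}$ through FZ22'' means proving these weighted analogues, not only tracking the integrability of $|\nabla U|^{p-2}$ near the origin. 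Two smaller items: your orthogonalization in the $\mathcal{D}^{1,2}_{0,*}$ inner product is acceptable, since for the radial tangent directions $U$ and $\frac{N-p}{p}U+x\cdot\nabla U$ it is equivalent, via their eigenvalue equations, to the $L^2_{\beta,*}$ orthogonality that the gap lemmas actually require; but you still need an argument (as in the paper's Lemmas \ref{lemma:rtnm2b} and \ref{lemma:rtnm2b2}) that a minimizing pair $(c_n,\lambda_n)$ exists at all, i.e.\ that the infimum defining $d_n$ is attained, before any first-order orthogonality can be invoked.
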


    \begin{remark}\label{remp2cy}\rm
    By using perturbation methods as those in \cite{FZ22}, it is easy to verify that the exponent $\gamma$ in Theorem \ref{thmprtp} is sharp.

    In fact, if $1<p<2$, let us fix $U:=U_1$ and consider first $u_n(x):=\left(\frac{|x|}{|A_nx|}\right)^{\frac{\beta}{p^*_\beta}} U(A_nx)$ for $x\in\mathbb{R}^N\setminus\{0\}$, and $u_n(0):=U(0)$, where $A_n\in\mathbb{R}^N\times\mathbb{R}^N$ denotes the diagonal matrix
    \[
    A_n=\mathrm{diag}\left(1,\ldots,1,1+\frac{1}{n}\right).
    \]
    Then it is not difficult to check that
    \[
    \int_{\mathbb{R}^N}|\nabla u_n|^p \mathrm{d}x
    \geq (1+ n^{-1})\int_{\mathbb{R}^N}|\nabla U|^p \mathrm{d}x
    + C_1(N,p,\beta)n^{-2}
    \]
    and
    \[
    \left(\int_{\mathbb{R}^N}
    \frac{|u_n|^{p^*_\beta}}{|x|^{\beta}} \mathrm{d}x\right)^\frac{p}{p^*_\beta}
    = (1+n^{-1})^\frac{p}{p^*_\beta}
    \left(\int_{\mathbb{R}^N}
    \frac{|U|^{p^*_\beta}}{|x|^{\beta}} \mathrm{d}x\right)^\frac{p}{p^*_\beta}
    \leq (1+n^{-1})
    \left(\int_{\mathbb{R}^N}
    \frac{|U|^{p^*_\beta}}{|x|^{\beta}} \mathrm{d}x\right)^\frac{p}{p^*_\beta},
    \]
    for $n$ sufficiently large, thus the left hand side of \eqref{hsrt} behaves as $n^{-2}$. While the right side of \eqref{hsrt} behaves as $n^{-\gamma}$, that is,
    \begin{align*}
    \inf_{v\in \mathcal{M}_\beta}
    \|\nabla u-\nabla v\|_{L^p(\mathbb{R}^N)}^\gamma
    \sim & \left(\int_{\mathbb{R}^N}|\nabla (U(A_nx)-U(x))|^p \mathrm{d}x\right)^{\frac{\gamma}{p}}
    \\ \sim & \left(\int_{\mathbb{R}^N}\left|\frac{1}{n}\frac{\partial U}{\partial \bar{x}_N}\bar{x}_N\right|^p \mathrm{d}x\right)^{\frac{\gamma}{p}}
    \\ \sim & n^{-\gamma},
    \end{align*}
    hence \eqref{hsrt} cannot hold with $\gamma<2$.

    On the other hand, if $p\geq 2$, let us fix $\varphi\in C^\infty_c(B_1)$ a nontrivial function, here $B_1=B(\mathbf{0},1)$ denotes the unit ball centered at the origin, and consider now
    \[
    \tilde{u}_n(x):=\left(\frac{|x|}{|x_n+x|}\right)^{\frac{\beta}{p^*_\beta}}\left(U(x)
    +
    \varphi(x_n+x)\right),
     \]
    for $x\in\mathbb{R}^N\setminus\{-x_n\}$, and $\tilde{u}_n(-x_n):=U(-x_n)+\varphi(0)$,
     where $x_n\in\mathbb{R}^N$ is a sequence of points satisfying $|x_n|\to \infty$ as $n\to \infty$. One can check that
    \[
    \int_{\mathbb{R}^N}|\nabla \tilde{u}_n|^p \mathrm{d}x
    = \int_{\mathbb{R}^N}|\nabla U|^p \mathrm{d}x
    + \int_{\mathbb{R}^N}|\nabla \varphi|^p \mathrm{d}x
    + r_{n,1}
    \]
    and
    \[
    \int_{\mathbb{R}^N}
    \frac{|\tilde{u}_n|^{p^*_\beta}}{|x|^{\beta}} \mathrm{d}x
    = \int_{\mathbb{R}^N}
    \frac{|U|^{p^*_\beta}}{|x|^{\beta}} \mathrm{d}x
    + \int_{\mathbb{R}^N}
    \frac{|\varphi|^{p^*_\beta}}{|x|^{\beta}} \mathrm{d}x
    + r_{n,2}
    \]
    with $|r_{n,1}|+|r_{n,2}|\leq C(|\nabla U(x_n)|+U(x_n))\leq CU(x_n)\to 0$ as $n\to \infty$.  Hence, choosing a sequence $\varepsilon_n\to 0$ as $n\to \infty$ such that $U(x_n)\ll \varepsilon_n\ll 1$, the functions \[
    \hat{u}_n(x):=\left(\frac{|x|}{|x_n+x|}\right)^{\frac{\beta}{p^*_\beta}}
    \left(U(x)+\varepsilon_n
    \varphi(x_n+x)\right)
    \]
    for $x\in\mathbb{R}^N\setminus\{-x_n\}$, and $\hat{u}_n(-x_n):=U(-x_n)+\varepsilon_n\varphi(0)$
     satisfy
    \[
    \int_{\mathbb{R}^N}|\nabla \hat{u}_n|^p \mathrm{d}x
    = \int_{\mathbb{R}^N}|\nabla U|^p \mathrm{d}x
    + \varepsilon_n^p\int_{\mathbb{R}^N}|\nabla \varphi|^p \mathrm{d}x
    + o(\varepsilon_n^p)
    \]
    and
    \[
    \int_{\mathbb{R}^N}
    \frac{|\hat{u}_n|^{p^*_\beta}}{|x|^{\beta}} \mathrm{d}x
    = \int_{\mathbb{R}^N}
    \frac{|U|^{p^*_\beta}}{|x|^{\beta}} \mathrm{d}x
    + \varepsilon_n^{p^*_\beta}\int_{\mathbb{R}^N}
    \frac{|\varphi|^{p^*_\beta}}{|x|^{\beta}} \mathrm{d}x
    + o(\varepsilon_n^{p^*_\beta}).
    \]
    Thanks to these facts, one easily deduces that the left hand side of \eqref{hsrt} behaves as $\varepsilon_n^p$, while the right hand side of \eqref{hsrt} behaves as $\varepsilon^\gamma_n$. Hence \eqref{hsrt} cannot hold with $\gamma<p$.
    \end{remark}

    \begin{remark}\label{remp2c}\rm
    In this paper, to handle the general case $1<p<N$ with $0<\beta<p$ and obtain the remainder terms, as stated in \cite{FZ22}, we need to consider three cases $1<p\leq\frac{2N}{N+2-\beta}$, $\frac{2N}{N+2-\beta}< p<2$, and $2\leq p<N$ by using different arguments.
    In fact, note that $p^*_{\beta}=\frac{p(N-\beta)}{N-p}\leq 2$ implies $p\leq \frac{2N}{N+2-\beta}$, $p^*_{\beta}>2$ implies $p>\frac{2N}{N+2-\beta}$, respectively. Moreover, $\frac{2N}{N+2-\beta}<2$ is equivalent to $\beta<2$. Therefore, when $1<p<2$, we will split this problem into two cases:
    \begin{itemize}
    \item[$(i)$]
    $1<p\leq \frac{2N}{N+2-\beta}$;
    \item[$(ii)$]
    $\frac{2N}{N+2-\beta}< p<2$.
    \end{itemize}
    The reason why we consider the above two cases is that, it needs some appropriate algebraic inequalities which requires to compare $p$ and $p^*_{\beta}$ with $2$, see Lemmas \ref{lemui1p} and \ref{lemui1p*l}. However,  $2\leq p<N$ implies $p^*_{\beta}> 2$, and the $\mathcal{D}^{1,p}_{0}(\mathbb{R}^N)$ norm is stronger than any weighted $L^2_{\beta,*}(\mathbb{R}^N)$ norm (see \eqref{defd12*na}), so that we can deal with this case directly.
    \end{remark}

\subsection{Structure of the paper}\label{subsect:structrue}

In Section \ref{sectce}, we prove the compactness that is we give the proof of Proposition \ref{propcet}. Section \ref{sectndr} is devoted to proving the non-degeneracy of extremal function $U$. Then in Section \ref{sectspana}, we give the spectral analysis with the help of compactness. In Section \ref{sectpromr}, we first prove a local variant of Theorem \ref{thmprtp} then complete the proof of Theorem \ref{thmprtp} by Lions' concentration and compactness principle.  Finally, we collect some technical estimates in Appendixes \ref{sectpls} and \ref{appsue}.

    \noindent{\bfseries Notations.}
    Throughout this paper, we denote $B_\rho:=B(\mathbf{0},\rho)$ be the ball with radius $\rho$ centered at the origin.
Moreover, $c$, $C$, $C'$ and $C_i$ are indiscriminately used to denote various absolutely positive constants from line to line. $a\sim b$ means that $C'b\leq a\leq Cb$.

\section{Compact embedding theorem}\label{sectce}

    We first establish the following Poincar\'{e} type inequalities which will be useful later.
    \begin{lemma}\label{lemcztj}
    Let $1<p<N$ and $0<\beta<p$. Then there exists $C=C(N,p,\beta)>0$ such that for any $\varphi\in \mathcal{D}^{1,2}_{0,*}(\mathbb{R}^N)\cap L^2_{\beta,*}(\mathbb{R}^N)$, it holds that
    \begin{align}\label{continouseb}
    \int_{\mathbb{R}^N}|\nabla U|^{p-2}|\nabla \varphi|^2\mathrm{d}x
    \geq C \int_{\mathbb{R}^N}
    |x|^{-\beta}|U|^{p^*_\beta-2}\varphi^2\mathrm{d}x.
    \end{align}
    Also, there exists $\vartheta=\vartheta(N,p,\beta)>0$ such that, for any $\rho\in (0,1)$, we have
    \begin{align}\label{continouseb1}
    \int_{\mathbb{R}^N}|\nabla U|^{p-2}|\nabla \varphi|^2\mathrm{d}x
    \geq \frac{C}{\rho^{\vartheta}}\int_{B_\rho}
    |x|^{-\beta}|U|^{p^*_\beta-2}\varphi^2\mathrm{d}x,
    \end{align}
    and
    \begin{align}\label{continouseb2}
    \int_{\mathbb{R}^N}|\nabla U|^{p-2}|\nabla \varphi|^2\mathrm{d}x
    \geq C|\log \rho|^2\int_{\mathbb{R}^N\setminus B_{\frac{1}{\rho}}}
    |x|^{-\beta}|U|^{p^*_\beta-2}\varphi^2\mathrm{d}x.
    \end{align}
    \end{lemma}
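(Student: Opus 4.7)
The plan is to first derive a ground-state representation identity via a Picone-type trick, which immediately yields (\ref{continouseb}) with constant $1$, and then to refine this identity using the explicit asymptotic profile of $U$ to obtain the localized estimates (\ref{continouseb1}) and (\ref{continouseb2}).

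For (\ref{continouseb}), I would take $\varphi\in C^1_{c,0}(\mathbb{R}^N)$ and set $u := \varphi/U$, which is well-defined because $U>0$ everywhere and $\varphi$ vanishes near the origin. Writing $\varphi = uU$ expands $|\nabla\varphi|^2 = u^2|\nabla U|^2 + 2uU(\nabla U\cdot\nabla u) + U^2|\nabla u|^2$. The cross term
$2uU|\nabla U|^{p-2}\nabla U\cdot\nabla u = U|\nabla U|^{p-2}\nabla U\cdot\nabla(u^2)$
is then integrated by parts using the identity
$\mathrm{div}(U|\nabla U|^{p-2}\nabla U) = |\nabla U|^p - |x|^{-\beta}U^{p^*_\beta}$,
which follows from the Euler-Lagrange equation satisfied by $U$. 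This yields the ground-state identity
\[
\int_{\mathbb{R}^N}|\nabla U|^{p-2}|\nabla\varphi|^2\,\mathrm{d}x
= \int_{\mathbb{R}^N}|x|^{-\beta}U^{p^*_\beta-2}\varphi^2\,\mathrm{d}x
+ \int_{\mathbb{R}^N}U^2|\nabla U|^{p-2}|\nabla u|^2\,\mathrm{d}x,
\]
so that the non-negativity of the last term gives (\ref{continouseb}) with $C=1$. A density argument then extends the inequality to $\mathcal{D}^{1,2}_{0,*}\cap L^2_{\beta,*}$.

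For (\ref{continouseb1}) and (\ref{continouseb2}), I would combine the ground-state identity with weighted Hardy-type estimates for the function $u = \varphi/U$. On $B_\rho$ with $\rho\in(0,1)$, the explicit form of $U$ yields $|x|^{-\beta}U^{p^*_\beta-2}\sim|x|^{-\beta}$ and $U^2|\nabla U|^{p-2}\sim|x|^{(p-2)(1-\beta)/(p-1)}$; after rescaling $x=\rho y$ to the unit ball, the scaling mismatch between these two weights produces an overall factor $\rho^{(p-\beta)/(p-1)}$, suggesting $\vartheta = (p-\beta)/(p-1) > 0$. A parallel analysis at infinity proves (\ref{continouseb2}) via the decay $U(x)\sim|x|^{-(N-p)/(p-1)}$; there the relevant integrals sit at the borderline of integrability for generic parameters, which converts the would-be polynomial correction into the logarithmic factor $|\log\rho|^2$.

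The hard part will be justifying the weighted Hardy inequalities themselves, since the exponents of the two weights do not in general satisfy the scale-invariant Caffarelli-Kohn-Nirenberg condition (they would only match when $\beta = p$), and since $u=\varphi/U$ carries no natural boundary condition on $\partial B_\rho$ or $\partial B_{1/\rho}$. Overcoming this will likely require suitable cutoff constructions, together with a case split depending on the sign of $(p-2)(1-\beta)$ and on whether $p^*_\beta$ exceeds $2$, in line with Remark \ref{remp2c}.
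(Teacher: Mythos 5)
Your proof of \eqref{continouseb} is correct and takes a genuinely different route from the paper: instead of the second-variation argument used there, you use the substitution $\varphi=uU$ and the equation $-\mathrm{div}(|\nabla U|^{p-2}\nabla U)=|x|^{-\beta}U^{p^*_\beta-1}$ to get the ground-state identity
\[
\int_{\mathbb{R}^N}|\nabla U|^{p-2}|\nabla\varphi|^2\,\mathrm{d}x
=\int_{\mathbb{R}^N}|x|^{-\beta}U^{p^*_\beta-2}\varphi^2\,\mathrm{d}x
+\int_{\mathbb{R}^N}U^2|\nabla U|^{p-2}\bigl|\nabla(\varphi/U)\bigr|^2\,\mathrm{d}x,
\]
which is legitimate for $\varphi\in C^1_{c,0}(\mathbb{R}^N)$ (no boundary terms, $U$ smooth and $\nabla U\neq0$ on the support) and even yields the sharp constant $C=1$, whereas the paper obtains \eqref{continouseb} from the sign of the second variation of $\mathfrak F$ at $U$. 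This part is fine.

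The genuine gap is in \eqref{continouseb1} and \eqref{continouseb2}, which you reduce to ``weighted Hardy-type estimates for $u=\varphi/U$'' that you do not prove and that do not follow from scaling alone. For \eqref{continouseb1}, the two weights $|x|^{-\beta}$ and $U^2|\nabla U|^{p-2}\sim|x|^{(p-2)(1-\beta)/(p-1)}$ are subcritical for Hardy (their exponents differ by $2-\frac{p-\beta}{p-1}<2$), and since $u$ satisfies no boundary condition on $\partial B_\rho$, any Hardy inequality on $B_\rho$ carries a boundary/average term; if you rescale to $B_1$ as you propose, that correction term scales like $\rho^{N-\beta}$ times the average of $u^2$ on an annulus of radius $\sim\rho$, which is of order one relative to $\int|x|^{-\beta}U^{p^*_\beta-2}\varphi^2$ and produces no gain $\rho^\vartheta$ at all; to get a positive power one must instead run the Hardy argument from a fixed reference annulus (say $B_1\setminus B_{1/2}$) and control the average there via \eqref{continouseb}, and then the exponent one obtains is in general $\min\{\tfrac{p-\beta}{p-1},\,N-\beta\}$ (possibly with a case split and a logarithmic borderline), not simply $\tfrac{p-\beta}{p-1}$ — none of which is carried out. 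For \eqref{continouseb2} your argument is only the assertion that ``borderline integrability'' converts a power into $|\log\rho|^2$; in fact the exterior weight balance is not critical for generic $(N,p,\beta)$, and the logarithm has to be produced by an explicit mechanism. The paper does this with a logarithmic cutoff $\eta_\rho$ supported in $\{|x|\ge\rho^{-1/2}\}$, a radial Fubini/Cauchy--Schwarz estimate for $\int|x|^{-\beta}U^{p^*_\beta-2}|\eta_\rho\varphi|^2$, and absorption of the resulting $|\log\rho|^{-2}$-term via \eqref{continouseb}; likewise it proves \eqref{continouseb1} not by Hardy for $\varphi/U$ but by a weighted Sobolev inequality on $B_1$ (with a case split $\beta\le1$ versus $\beta>1$) followed by H\"older on $B_\rho$. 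Since \eqref{continouseb1}--\eqref{continouseb2} are exactly what drives the compactness in Proposition \ref{propcet}, the missing Hardy-type inequalities with boundary terms and correct $\rho$-dependence constitute the substance of the lemma, so as it stands the proposal is incomplete for these two estimates.
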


    \begin{proof}
    To prove \eqref{continouseb}, we can assume by approximation that $\varphi\in C^1_{c,0}(\mathbb{R}^N)$ as in \cite[Lemma 4.2]{FN19}.  Define
    \[
    \mathfrak{F}(u):=\int_{\mathbb{R}^N}|\nabla u|^p \mathrm{d}x
    -\mathcal{S}_\beta\left(\int_{\mathbb{R}^N}
    \frac{|u|^{p^*_\beta}}{|x|^{\beta}} \mathrm{d}x\right)^\frac{p}{p^*_\beta}.
    \]
    We know $U$ is a local minimum of the functional $\mathfrak{F}$, then
    \begin{align*}
    0\leq & \frac{d^2}{d\epsilon^2}\Big|_{\epsilon=0} \mathfrak{F}(U+\epsilon \varphi)
    \\ = & p \int_{\mathbb{R}^N}|\nabla U|^{p-2}|\nabla \varphi|^2\mathrm{d}x
    + p(p-2) \int_{\mathbb{R}^N}|\nabla U|^{p-4}(\nabla U\cdot\nabla\varphi)^2\mathrm{d}x
    \\ & - \mathcal{S}_\beta
    \Bigg[
    p\left(\frac{p}{p^*_\beta}-1\right)
    \left(\int_{\mathbb{R}^N}|x|^{-\beta}|U|^{p^*_\beta} \mathrm{d}x\right)^{\frac{p}{p^*_\beta}-2}
    \left(\int_{\mathbb{R}^N}
    |x|^{-\beta}|U|^{p^*_\beta-2}U\varphi\mathrm{d}x\right)^2
    \\ & \quad\quad + p(p^*_\beta-1)
    \left(\int_{\mathbb{R}^N}|x|^{-\beta}|U|^{p^*_\beta} \mathrm{d}x\right)^{\frac{p}{p^*_\beta}-1}
    \int_{\mathbb{R}^N}
    |x|^{-\beta}|U|^{p^*_\beta-2}\varphi^2\mathrm{d}x
    \Bigg].
    \end{align*}
    Noting that
    \begin{align*}
    \int_{\mathbb{R}^N}|\nabla U|^{p-4}(\nabla v\cdot\nabla\varphi)^2\mathrm{d}x
    \leq \int_{\mathbb{R}^N}|\nabla U|^{p-2}|\nabla \varphi|^2\mathrm{d}x,
    \end{align*}
    and
    \begin{align*}
    \left(\int_{\mathbb{R}^N}|x|^{-\beta}|U|^{p^*_\beta} \mathrm{d}x\right)^{\frac{p}{p^*_\beta}-2}
    \left(\int_{\mathbb{R}^N}
    |x|^{-\beta}|U|^{p^*_\beta-2}U\varphi\mathrm{d}x\right)^2\geq 0,
    \end{align*}
    these imply that
    \begin{align*}
    0\leq & p(p-1) \int_{\mathbb{R}^N}|\nabla U|^{p-2}|\nabla \varphi|^2\mathrm{d}x
    \\ & - \mathcal{S}_\beta p(p^*_\beta-1)
    \left(\int_{\mathbb{R}^N}|x|^{-\beta}|U|^{p^*_\beta} \mathrm{d}x\right)^{\frac{p}{p^*_\beta}-1}
    \int_{\mathbb{R}^N}
    |x|^{-\beta}|U|^{p^*_\beta-2}\varphi^2\mathrm{d}x
    \\ = & p(p-1) \int_{\mathbb{R}^N}|\nabla U|^{p-2}|\nabla \varphi|^2\mathrm{d}x
    - p(p^*_\beta-1)
    \int_{\mathbb{R}^N}
    |x|^{-\beta}|U|^{p^*_\beta-2}\varphi^2\mathrm{d}x,
    \end{align*}
    due to
    \[
    \int_{\mathbb{R}^N}|x|^{-\beta}|U|^{p^*_\beta} \mathrm{d}x
    =\mathcal{S}_\beta^{\frac{p^*_\beta}{p^*_\beta-p}}.
    \]
    Thus \eqref{continouseb} holds.

    To prove \eqref{continouseb1}, we can also assume by approximation that $\varphi\in C^1_{c,0}(\mathbb{R}^N)$ and we apply the Sobolev inequality with radial weights. More precisely, we have
    \begin{align}\label{wsit}
    \int_{\mathbb{R}^N}|\nabla U|^{p-2}|\nabla \varphi|^2\mathrm{d}x
    \geq & c\left(\int_{B_1}
    |x|^{-\frac{q\beta}{2}}|\varphi|^q\mathrm{d}x\right)^{\frac{2}{q}},
    \end{align}
    where $q=q(N,p,\beta)>2$.
    In fact, when $0<\beta\leq 1$, $|\nabla U|^{p-2}\geq  c$ inside $B_1$, it follows by Hardy-Sobolev inequality (see \cite{GY00}) that
    \[
    \int_{\mathbb{R}^N}|\nabla U|^{p-2}|\nabla \varphi|^2\mathrm{d}x
    \geq  c_1\int_{B_1}|\nabla \varphi|^2\mathrm{d}x
    \geq  c_2\left(\int_{B_1}
    |x|^{-\frac{q\beta}{2}}|\varphi|^q\mathrm{d}x\right)^{\frac{2}{q}},
    \]
    where $q=q(N,p,\beta)=\frac{2N}{N-2+\beta}>2$, thus \eqref{wsit} holds. Otherwise, if $1< \beta<p$, $|\nabla U|^{p-2}\geq  c|x|^{\frac{(2-p)(\beta-1)}{p-1}}$ inside $B_1$, then applying \eqref{continouseb} and the Sobolev inequality with radial weights (see \cite[Section 2.1]{Ma85}) we deduce
    \begin{align*}
    \int_{\mathbb{R}^N}|\nabla U|^{p-2}|\nabla \varphi|^2\mathrm{d}x
    \geq & c_1\int_{\mathbb{R}^N}(|x|^{-\beta}|U|^{p^*_\beta-2}\varphi^2
    +|\nabla U|^{p-2}|\nabla \varphi|^2)\mathrm{d}x
    \\ \geq & c_2\int_{B_1}
    |x|^{-\beta}\left(\varphi^2+|x|^{1+\frac{\beta-1}{p-1}}|\nabla \varphi|^2\right)\mathrm{d}x
    \\ \geq & c_2\int_{B_1}
    |x|^{-\beta}\left(\varphi^2+|x|^{2}|\nabla \varphi|^2\right)\mathrm{d}x
    \\ \geq & c_3\int_{B_1}
    \left(\left(|x|^{-\frac{\beta}{2}}\varphi\right)^2+|x|^{2}\left|\nabla \left(|x|^{-\frac{\beta}{2}}\varphi\right)\right|^2\right)\mathrm{d}x
    \\ \geq & c_4\left(\int_{B_1}
    |x|^{-\frac{q\beta}{2}}|\varphi|^q\mathrm{d}x\right)^{\frac{2}{q}},
    \end{align*}
    where $q=q(N)>2$, then \eqref{wsit} holds.
    Therefore, by H\"{o}lder inequality, for any $\rho\in (0,1)$ we obtain
    \begin{align*}
    \int_{B_\rho}
    |x|^{-\beta}|U|^{p^*_\beta-2}\varphi^2\mathrm{d}x
    \leq & C_1 \int_{B_\rho}
    |x|^{-\beta}\varphi^2\mathrm{d}x
    \\ \leq & C_2 \rho^{N(1-\frac{q}{2})}\left(\int_{B_\rho}
    |x|^{-\frac{q\beta}{2}}|\varphi|^q\mathrm{d}x\right)^{\frac{2}{q}}
    \\ \leq & C_3 \rho^{N(1-\frac{q}{2})}\int_{\mathbb{R}^N}|\nabla U|^{p-2}|\nabla \varphi|^2\mathrm{d}x,
    \end{align*}
    as desired.

    To prove \eqref{continouseb2}, we define
    \begin{eqnarray*}
    \eta_\rho(x):=
    \left\{ \arraycolsep=1.5pt
       \begin{array}{ll}
        0,\ \ &{\rm for}\ \ |x|<\rho^{-\frac{1}{2}},\\[2mm]
        \frac{2\log |x|-|\log \rho|}{|\log \rho|},\ \ &{\rm for}\ \  \rho^{-\frac{1}{2}}\leq |x|\leq \rho^{-1},\\[2mm]
        1,\ \ &{\rm for}\ \ |x|> \rho^{-1},
        \end{array}
    \right.
    \end{eqnarray*}
    and $\phi_\rho:=\eta_\rho\varphi\in C^1_{c,0}(\mathbb{R}^N)$.
    We note that, since
    \begin{equation*}
    U(r)\sim
    (1+r^{\frac{p-\beta}{p-1}})
    ^{-\frac{N-p}{p-\beta}},\quad
    U'(r)\sim
    (1+r^{\frac{p-\beta}{p-1}})
    ^{-\frac{N-\beta}{p-\beta}}r^{\frac{1-\beta}{p-1}},
    \end{equation*}
    then
    \begin{equation*}
    |U|^{p^*_\beta-2}\sim
    (1+r^{\frac{p-\beta}{p-1}})
    ^{-\frac{(N-\beta)(p-2)}{p-\beta}-2},\quad
    |\nabla U|^{p-2}\sim
    (1+r^{\frac{p-\beta}{p-1}})
    ^{-\frac{(N-\beta)(p-2)}{p-\beta}}r^{\frac{(1-\beta)(p-2)}{p-1}}.
    \end{equation*}
    Thanks to Fubini's theorem and using polar coordinates,
    \begin{align*}
    \int_{\mathbb{R}^N}
    |x|^{-\beta}|U|^{p^*_\beta-2}|\phi_\rho|^2\mathrm{d}x
    \leq & C_1\int_{\mathbb{S}^{N-1}}\int^{+\infty}_0    r^{N-1-\beta}(1+r^{\frac{p-\beta}{p-1}})
    ^{-\frac{(N-\beta)(p-2)}{p-\beta}-2}|\phi_\rho(r\theta)|^2
    \mathrm{d}r\mathrm{d}\theta
    \\ \leq & C_2\int_{\mathbb{S}^{N-1}}\int^{+\infty}_0    r^{N-1-\beta}(1+r^{\frac{p-\beta}{p-1}})
    ^{-\frac{(N-\beta)(p-2)}{p-\beta}-2}
    \\ & \quad \times
    \int^{+\infty}_r |\phi_\rho(t\theta)||\nabla\phi_\rho(t\theta)|
    \mathrm{d}t\mathrm{d}r\mathrm{d}\theta
    \\ \leq & C_3\int_{\mathbb{S}^{N-1}}\int^{+\infty}_0 \int^{t}_0    r^{N-1-\beta}(1+r^{\frac{p-\beta}{p-1}})
    ^{-\frac{(N-\beta)(p-2)}{p-\beta}-2}
    \\ & \quad \times
    |\phi_\rho(t\theta)||\nabla\phi_\rho(t\theta)|
    \mathrm{d}r\mathrm{d}t\mathrm{d}\theta
    \\ \leq & C_4\int_{\mathbb{S}^{N-1}}\int^{+\infty}_0     t^{N-\beta}(1+t^{\frac{p-\beta}{p-1}})
    ^{-\frac{(N-\beta)(p-2)}{p-\beta}-2}
    |\phi_\rho(t\theta)||\nabla\phi_\rho(t\theta)|
    \mathrm{d}t\mathrm{d}\theta.
    \end{align*}
    Thus, by Cauchy-Schwarz inequality we obtain
    \begin{align*}
    \int_{\mathbb{R}^N}
    |x|^{-\beta}|U|^{p^*_\beta-2}|\phi_\rho|^2\mathrm{d}x
    \leq & C_5\left(\int_{\mathbb{S}^{N-1}}\int^{+\infty}_0     t^{N+1-\beta}(1+t^{\frac{p-\beta}{p-1}})
    ^{-\frac{(N-\beta)(p-2)}{p-\beta}-2}|\nabla\phi_\rho(t\theta)|^2
    \mathrm{d}t\mathrm{d}\theta\right)^{\frac{1}{2}}
    \\ & \quad \times
    \left(\int_{\mathbb{S}^{N-1}}\int^{+\infty}_0     t^{N-1-\beta}(1+t^{\frac{p-\beta}{p-1}})
    ^{-\frac{(N-\beta)(p-2)}{p-\beta}-2}|\phi_\rho(t\theta)|^2
    \mathrm{d}t\mathrm{d}\theta\right)^{\frac{1}{2}},
    \end{align*}
    and since the last term in the right hand coincides with $\|\phi_\rho\|_{L^2_{\beta,*}(\mathbb{R}^N)}$ (up to a multiplicative constant), we conclude that
    \begin{align}\label{pfleql}
    \int_{\mathbb{R}^N}
    |x|^{-\beta}|U|^{p^*_\beta-2}|\phi_\rho|^2\mathrm{d}x
    \leq & C_6\int_{\mathbb{S}^{N-1}}\int^{+\infty}_0     t^{N+1-\beta}(1+t^{\frac{p-\beta}{p-1}})
    ^{-\frac{(N-\beta)(p-2)}{p-\beta}-2}|\nabla\phi_\rho(t\theta)|^2
    \mathrm{d}t\mathrm{d}\theta
    \nonumber\\ \leq & C_7 \int_{\mathbb{R}^N}|x|^{2-\beta}(1+|x|^{\frac{p-\beta}{p-1}})
    ^{-\frac{(N-\beta)(p-2)}{p-\beta}-2}|\nabla\phi_\rho(x)|^2
    \mathrm{d}x.
    \end{align}
    Then we have
    \begin{align*}
    \int_{\mathbb{R}^N\setminus B_{\frac{1}{\rho}}}
    |x|^{-\beta}|U|^{p^*_\beta-2}\varphi^2\mathrm{d}x
    \leq & C_1\int_{\mathbb{R}^N}
    |x|^{-\beta}|U|^{p^*_\beta-2}|\phi_\rho|^2\mathrm{d}x
    \\ \leq & C_2 \int_{\mathbb{R}^N}|x|^{2-\beta}(1+|x|^{\frac{p-\beta}{p-1}})
    ^{-\frac{(N-\beta)(p-2)}{p-\beta}-2}|\nabla \phi_\rho|^2\mathrm{d}x
    \\ \leq & C_3 \int_{\mathbb{R}^N}|x|^{2-\beta}(1+|x|^{\frac{p-\beta}{p-1}})
    ^{-\frac{(N-\beta)(p-2)}{p-\beta}-2}\eta_\rho^2|\nabla\varphi|^2
    \mathrm{d}x
    \\ & + C_3 \int_{\mathbb{R}^N}|x|^{2-\beta}(1+|x|^{\frac{p-\beta}{p-1}})
    ^{-\frac{(N-\beta)(p-2)}{p-\beta}-2}|\nabla\eta_\rho|^2
    \varphi^2\mathrm{d}x
    \\ \leq & C_3 \int_{\mathbb{R}^N\setminus B_{\frac{1}{\rho^{1/2}}}}|x|^{2-\beta}(1+|x|^{\frac{p-\beta}{p-1}})
    ^{-\frac{(N-\beta)(p-2)}{p-\beta}-2}|\nabla\varphi|^2
    \mathrm{d}x
    \\ & + 4C_3 |\log \rho|^{-2}
    \int_{B_{\frac{1}{\rho}}\setminus B_{\frac{1}{\rho^{1/2}}}}|x|^{-\beta}(1+|x|^{\frac{p-\beta}{p-1}})
    ^{-\frac{(N-\beta)(p-2)}{p-\beta}-2}\varphi^2
    \mathrm{d}x
    \\ \leq & C_4 \rho^{\frac{p-\beta}{2(p-1)}}\int_{\mathbb{R}^N\setminus B_{\frac{1}{\rho^{1/2}}}}|\nabla U|^{p-2}|\nabla\varphi|^2
    \mathrm{d}x
    \\ & + C_4 |\log \rho|^{-2}\int_{B_{\frac{1}{\rho}}\setminus B_{\frac{1}{\rho^{1/2}}}}|x|^{-\beta}
    |U|^{p^*_\beta-2}\varphi^2\mathrm{d}x.
    \end{align*}
    Thus, combining with \eqref{continouseb} we deduce that
    \begin{align*}
    \int_{\mathbb{R}^N\setminus B_{\frac{1}{\rho}}}
    |x|^{-\beta}|U|^{p^*_\beta-2}\varphi^2\mathrm{d}x
    \leq &   C_4 \rho^{\frac{p-\beta}{2(p-1)}}\int_{\mathbb{R}^N}|\nabla U|^{p-2}|\nabla\varphi|^2
    \mathrm{d}x
    \\ & + C_4 |\log \rho|^{-2}\int_{\mathbb{R}^N}|x|^{-\beta}
    |U|^{p^*_\beta-2}\varphi^2\mathrm{d}x
    \\ \leq & C_5 |\log \rho|^{-2}\int_{\mathbb{R}^N}|\nabla U|^{p-2}|\nabla\varphi|^2
    \mathrm{d}x,
    \end{align*}
    due to $\frac{p-\beta}{2(p-1)}>0$ and $\rho\in (0,1)$, \eqref{continouseb2} holds.
    \end{proof}

\subsection{\bf Proof of Proposition \ref{propcet}.}
    Let $\{\varphi_n\}$ be a sequence of functions in $\mathcal{D}^{1,2}_{0,*}(\mathbb{R}^N)$ with uniformly bounded norm. It follows from \eqref{continouseb} that $\|\varphi_n\|_{L^2_{\beta,*}(\mathbb{R}^N)}$ are uniformly bounded as well.

    Since both $|\nabla U|^{p-2}$ and $|x|^{-\beta}|U|^{p^*_\beta-2}$ are locally bounded away from zero and infinity in $\mathbb{R}^N\setminus\{\mathbf{0}\}$, by Rellich-Kondrachov Theorem and a diagonal argument we deduce that, up to a subsequence, there exists $\varphi\in \mathcal{D}^{1,2}_{0,*}(\mathbb{R}^N)\cap L^2_{\beta,*}(\mathbb{R}^N)$ such that $\varphi_n\rightharpoonup \varphi$ in $\mathcal{D}^{1,2}_{0,*}(\mathbb{R}^N)\cap L^2_{\beta,*}(\mathbb{R}^N)$ and $\varphi_n\to \varphi$ locally in $L^2_{\beta,*}(\mathbb{R}^N\setminus\{\mathbf{0}\})$.

    Also, it follows from \eqref{continouseb1} and \eqref{continouseb2} that, for any $\rho\in (0,1)$,
    \begin{align*}
    \int_{B_\rho}
    |x|^{-\beta}|U|^{p^*_\beta-2}\varphi^2\mathrm{d}x
    \leq C\rho^{\vartheta},
    \quad
    \int_{B_{\frac{1}{\rho}}}
    |x|^{-\beta}|U|^{p^*_\beta-2}\varphi^2\mathrm{d}x
    \leq \frac{C}{|\log \rho|^2}.
    \end{align*}
    We conclude the proof by defining the compact set $K_\rho:=\overline{B_{\frac{1}{\rho}}}\setminus B_\rho$ and applying the strong convergence of $\varphi_n$ in $K_\rho$, together with the arbitrariness of $\rho$ (that can be chosen arbitrarily small).
    \qed

    As mentioned in \cite{FZ22}, because of the crucial inequality \ref{uinx2pl}, we shall see that Proposition \ref{propcet} allows us to deal with the case $\frac{2N}{N+2-\beta}< p<2$ when we show the stability of Hardy-Sobolev inequality \eqref{defhsi}. However, when $1<p\leq\frac{2N}{N+2-\beta}$ which implies $p<p^*_{\beta}\leq 2$, we will need a much more delicate compactness result that we now present.

    \begin{lemma}\label{propcetl}
    Let $1<p\leq \frac{2N}{N+2-\beta}$ with $0<\beta<p$. Let $\{v_n\}$ be a sequence of functions in $\mathcal{D}^{1,p}_{0}(\mathbb{R}^N)$ satisfying
    \begin{align}\label{propcetll}
    & \int_{\mathbb{R}^N}
    (|\nabla U|+\varepsilon_n|\nabla v_n|)^{p-2}|\nabla v_n|^2\mathrm{d}x\leq 1,
    \end{align}
    where $\varepsilon_n\in (0,1)$ is a sequence of positive numbers converging to $0$. Then, up to a subsequence, $v_n$ convergence weakly in $\mathcal{D}^{1,p}_{0}(\mathbb{R}^N)$ to some $v\in\mathcal{D}^{1,p}_{0}(\mathbb{R}^N)\cap L^2_{\beta,*}(\mathbb{R}^N)$. Also, given any constant $C_1\geq 0$ it holds
    \begin{align}\label{propcetlc}
    \int_{\mathbb{R}^N}|x|^{-\beta}
    \frac{(U+C_1\varepsilon_nv_n)^{p^*_{\beta}}}
    {U^2+|\varepsilon_nv_n|^2}|v_n|^2\mathrm{d}x
    \to \int_{\mathbb{R}^N}|x|^{-\beta}
    U^{p^*_{\beta}-2}|v|^2\mathrm{d}x.
    \end{align}
    \end{lemma}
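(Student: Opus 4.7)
The plan is a three-step compactness argument: extract a uniform $\mathcal{D}^{1,p}_{0}$ bound on $\{v_n\}$ from \eqref{propcetll}; pass to a weak limit; prove \eqref{propcetlc} via a universal pointwise majorant combined with a compact-annulus/tail splitting and Proposition \ref{propcet} on compact pieces.

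For the boundedness, since $p\leq\frac{2N}{N+2-\beta}\leq 2$, I would apply H\"older with exponents $2/p$ and $2/(2-p)$ to the identity
\begin{align*}
|\nabla v_n|^p=\bigl[(|\nabla U|+\varepsilon_n|\nabla v_n|)^{p-2}|\nabla v_n|^2\bigr]^{p/2}(|\nabla U|+\varepsilon_n|\nabla v_n|)^{p(2-p)/2},
\end{align*}
which via \eqref{propcetll} gives $\|\nabla v_n\|_p^p\leq(\|\nabla U\|_p^p+\varepsilon_n^p\|\nabla v_n\|_p^p)^{(2-p)/2}$. Since $\varepsilon_n<1$, an elementary algebraic step self-improves this to $\|\nabla v_n\|_p\leq C$. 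Reflexivity and Rellich then produce a subsequence with $v_n\rightharpoonup v$ in $\mathcal{D}^{1,p}_{0}(\mathbb{R}^N)$, $v_n\to v$ a.e., and $v_n\to v$ in $L^q_{\mathrm{loc}}$ for every $q<p^*$. In particular $\varepsilon_n v_n\to 0$ a.e., so the integrand in \eqref{propcetlc} converges pointwise to $|x|^{-\beta}U^{p^*_\beta-2}v^2$.

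The key pointwise majorant
\begin{align*}
\frac{(U+C_1\varepsilon_n v_n)^{p^*_\beta}}{U^2+(\varepsilon_n v_n)^2}|v_n|^2\leq CU^{p^*_\beta-2}|v_n|^2
\end{align*}
is established by case-splitting on $A_n=\{U\geq\varepsilon_n|v_n|\}$ and $B_n=\{U<\varepsilon_n|v_n|\}$: on $A_n$ one has $|U+C_1\varepsilon_n v_n|\leq(1+C_1)U$ and $U^2+(\varepsilon_n v_n)^2\geq U^2$; on $B_n$ one has $|U+C_1\varepsilon_n v_n|\leq(1+C_1)\varepsilon_n|v_n|$ and $U^2+(\varepsilon_n v_n)^2\geq(\varepsilon_n v_n)^2$, producing a factor $(\varepsilon_n|v_n|)^{p^*_\beta-2}\leq U^{p^*_\beta-2}$ (using $p^*_\beta-2\leq 0$ and $U\leq\varepsilon_n|v_n|$ on $B_n$). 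With the pointwise bound in hand, \eqref{propcetlc} follows by a generalized dominated convergence argument provided one proves $\int|x|^{-\beta}U^{p^*_\beta-2}v_n^2\to\int|x|^{-\beta}U^{p^*_\beta-2}v^2<\infty$. I would split $\mathbb{R}^N=K_\rho\cup(\mathbb{R}^N\setminus K_\rho)$ with $K_\rho=\overline{B_{1/\rho}}\setminus B_\rho$: on $K_\rho$ the weights are bounded above and below and $v_n\to v$ in $L^2(K_\rho)$ by Rellich, while the tail $\int_{\mathbb{R}^N\setminus K_\rho}|x|^{-\beta}U^{p^*_\beta-2}v_n^2$ should be $o_\rho(1)$ uniformly in $n$; diagonalizing in $\rho$ closes the argument, and Fatou applied to the pointwise bound simultaneously yields $v\in L^2_{\beta,*}(\mathbb{R}^N)$.

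The main obstacle is exactly this uniform-in-$n$ tail estimate. Since $p\leq 2$, the weight $(|\nabla U|+\varepsilon_n|\nabla v_n|)^{p-2}$ is pointwise \emph{smaller} than $|\nabla U|^{p-2}$, so the Poincar\'e-type inequalities \eqref{continouseb1}--\eqref{continouseb2} of Lemma \ref{lemcztj} cannot be applied directly to $\{v_n\}$. I plan to adapt those proofs by further splitting the domain according to whether $\varepsilon_n|\nabla v_n|\leq|\nabla U|$ (where the two weights are comparable and Lemma \ref{lemcztj} applies as stated) or not (where the $\mathcal{D}^{1,p}_0$ bound from Step 1, combined with Hardy--Sobolev and the vanishing factor $\varepsilon_n^{2-p}$, produce a small contribution). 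Balancing these two regimes so as to overwhelm the growth of $U^{p^*_\beta-2}$ near infinity (recall $p^*_\beta\leq 2$) is the delicate point of the argument.
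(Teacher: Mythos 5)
Your preliminary steps are fine: the H\"older argument giving $\|\nabla v_n\|_{L^p}\leq C$, the extraction of a weak limit with a.e.\ convergence, and the pointwise bound $\frac{(U+C_1\varepsilon_n v_n)^{p^*_\beta}}{U^2+|\varepsilon_n v_n|^2}|v_n|^2\leq C\,U^{p^*_\beta-2}|v_n|^2$ (valid since $p^*_\beta\leq 2$) are all correct and match the opening of the paper's proof. The gap is the reduction that follows: you propose to conclude \eqref{propcetlc} by generalized dominated convergence, which requires proving $\int_{\mathbb{R}^N}|x|^{-\beta}U^{p^*_\beta-2}v_n^2\,\mathrm{d}x\to\int_{\mathbb{R}^N}|x|^{-\beta}U^{p^*_\beta-2}v^2\,\mathrm{d}x<\infty$. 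This convergence is \emph{not} a consequence of \eqref{propcetll} and is false in general. Hypothesis \eqref{propcetll} only forces $\varepsilon_n^{p-2}\int|\nabla v_n|^p\lesssim 1$ on the region where $\varepsilon_n|\nabla v_n|\gg|\nabla U|$, and this still admits tall thin spikes: for $p<\frac{2N}{N+2}$ (a range allowed by the lemma), take $v_n$ a bump centered at a fixed $x_0\neq 0$ of height $\varepsilon_n^{-a}$ and width $\varepsilon_n^{b}$ with $b(N-p)=2-p+ap$ and $a$ large. Then \eqref{propcetll} holds after a harmless normalization, $\|\nabla v_n\|_{L^p}^p\sim\varepsilon_n^{2-p}\to 0$ so $v=0$, yet $\int|x|^{-\beta}U^{p^*_\beta-2}v_n^2\sim\varepsilon_n^{bN-2a}\to\infty$. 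So your majorants do not have convergent (or even bounded) integrals, and the intermediate claim you reduce to fails; note that the genuine conclusion \eqref{propcetlc} survives in this example only because of the damping denominator $U^2+|\varepsilon_n v_n|^2$, which is exactly what is thrown away when you pass to the majorant $CU^{p^*_\beta-2}v_n^2$.

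Your closing paragraph correctly identifies the uniform tail bound as the hard point, but the proposed fix (split according to $\varepsilon_n|\nabla v_n|\leq|\nabla U|$, apply Lemma \ref{lemcztj} on the good set and use the factor $\varepsilon_n^{2-p}$ on the bad set) does not work: \eqref{continouseb1}--\eqref{continouseb2} are global inequalities for functions in $\mathcal{D}^{1,2}_{0,*}(\mathbb{R}^N)$ and cannot be applied to the restriction of $v_n$ to a gradient-level set, while smallness of $\int|\nabla v_n|^p$ on the bad set gives no control of the zeroth-order weighted quantity there (again the spike example). The paper's proof is organized precisely around this obstruction: it truncates at the level of the \emph{function}, $v_{n,1}=\min\{v_n,\zeta U/\varepsilon_n\}$, controls the excess $v_{n,2}$ via the supersolution/convexity estimate \eqref{spsccy} and the H\"older estimate with exponent $Q$ in \eqref{vn2pl1}--\eqref{vn2pl4}, proving $\varepsilon_n^{p^*_\beta-2}\int|x|^{-\beta}|v_{n,2}|^{p^*_\beta}=O(\varepsilon_n^\eta)$ (the correct substitute for the norm convergence you assume); and for the truncated part it applies the weighted Hardy--Poincar\'e inequality of Lemma \ref{lem:A-hpiw} to the composite $w_n=\bigl(\bigl(1+\tfrac{\varepsilon_n v_n}{U}\bigr)^{p-2}v_n^2\bigr)^{1/p}$ together with the algebraic inequalities \eqref{nif}--\eqref{nis}, whose factor $(1+r)^{-\frac{p-\beta}{p-1}}$ is what yields the uniform-in-$n$ tail decay $R^{-\frac{p-\beta}{p-1}}$. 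None of these ingredients is present in your outline, so the proposal as written does not prove the lemma.
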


    \begin{proof}
    We follow the arguments as those in \cite[Lemma 3.4]{FZ22}.
    Up to replacing $v_n$ by $|v_n|$, we can assume that $v_n\geq 0$. Note that $p<p^*_\beta\leq 2$ under our assumption $1<p\leq \frac{2N}{N+2-\beta}$ with $0<\beta<p$.

    Observe that, under H\"{o}lder inequality,
    \begin{align*}
    \int_{\mathbb{R}^N}|\nabla v_n|^p\mathrm{d}x
    \leq & \left(\int_{\mathbb{R}^N}
    (|\nabla U|+\varepsilon_n|\nabla v_n|)^{p-2}|\nabla v_n|^2\mathrm{d}x\right)^{\frac{p}{2}}
    \left(\int_{\mathbb{R}^N}
    (|\nabla U|+\varepsilon_n|\nabla v_n|)^p\mathrm{d}x\right)^{1-\frac{p}{2}}
    \\ \leq & \left(\int_{\mathbb{R}^N}
    (|\nabla U|+\varepsilon_n|\nabla v_n|)^{p-2}|\nabla v_n|^2\mathrm{d}x\right)^{\frac{p}{2}}
    \left(\|U\|^p_{\mathcal{D}^{1,p}_0(\mathbb{R}^N)}
    +\varepsilon_n^p\int_{\mathbb{R}^N}
    |\nabla v_n|^p\mathrm{d}x\right)^{1-\frac{p}{2}}
    \end{align*}
    that combined with \eqref{propcetll} gives
    \begin{align}\label{propcetllb}
    \left(\int_{\mathbb{R}^N}
    |\nabla v_n|^p\mathrm{d}x\right)^{\frac{2}{p}}
    \leq C \int_{\mathbb{R}^N}
    (|\nabla U|+\varepsilon_n|\nabla v_n|)^{p-2}|\nabla v_n|^2\mathrm{d}x
    \leq C.
    \end{align}
    Thus, up to a subsequence, $v_n$ converges weakly in $\mathcal{D}^{1,p}_0(\mathbb{R}^N)$ and also a.e. to some function $v\in \mathcal{D}^{1,p}_0(\mathbb{R}^N)$. Hence, to conclude the proof, we need to show the validity of \eqref{propcetlc}.

    We first prove it under the assumption that $\varepsilon_n v_n\leq \zeta U$ with some small constant $\zeta=\zeta(N,p,\beta,C_1)\in (0,1)$ be determined. Later, we will remove this assumption.

    $\bullet$ {\em \color{blue} Step 1: prove of \eqref{propcetlc} when $\varepsilon_n v_n\leq \zeta U$}. Since $\varepsilon_n v_n$ is bounded by $\zeta U\leq U$, we have that $1+\frac{\varepsilon_n v_n}{U}\leq 2$, thus
    \begin{align*}
    \int_{\mathbb{R}^N}|x|^{-\beta}
    (U+\varepsilon_nv_n)^{p^*_{\beta}-2}|v_n|^2\mathrm{d}x
    = & \int_{\mathbb{R}^N}|x|^{-\beta}
    U^{p^*_{\beta}-2}\left(1+\frac{\varepsilon_n v_n}{U}\right)^{p^*_{\beta}-2}|v_n|^2\mathrm{d}x
    \\ \leq & 2^{p^*_\beta-p}\int_{\mathbb{R}^N}|x|^{-\beta}
    U^{p^*_{\beta}-2}\left(1+\frac{\varepsilon_n v_n}{U}\right)^{p-2}|v_n|^2\mathrm{d}x.
    \end{align*}
    Recall that
    \begin{equation}\label{UnUe}
    |U|^{p^*_\beta-2}\sim
    (1+r^{\frac{p-\beta}{p-1}})
    ^{-\frac{(N-\beta)(p-2)}{p-\beta}-2},\quad
    |\nabla U|^{p-2}\sim
    (1+r^{\frac{p-\beta}{p-1}})
    ^{-\frac{(N-\beta)(p-2)}{p-\beta}}r^{\frac{(1-\beta)(p-2)}{p-1}}.
    \end{equation}
    Moreover, from the following Hardy-Poincar\'{e} inequality, see Lemma \ref{lem:A-hpiw}, we deduce that:  For any $p>1$ and $\xi\geq 1$, and any compactly supported function $w\in \mathcal{D}^{1,p}_0(\mathbb{R}^N)$, one has
    \begin{align}\label{HPI}
    \int_{\mathbb{R}^N}|w|^{p}
    |x|^{-\beta}\left[\left(1+|x|^{\frac{p-\beta}{p-1}}
    \right)^{p-1}\right]^{\xi-1}
    \mathrm{d}x
    \leq & C\int_{\mathbb{R}^N}|\nabla w|^{p}
    \left[\left(1+|x|^{\frac{p-\beta}{p-1}}\right)^{p-1}\right]^{\xi}
    \mathrm{d}x.
    \end{align}
    By approximation, we can apply this inequality with
    \begin{align*}
    \xi=1+\frac{(2-p^*_\beta)(N-p)}{(p-1)(p-\beta)}
    \quad \mbox{and}\quad w=w_n:=\left(\left(1+\frac{\varepsilon_n v_n}{U}\right)^{p-2}|v_n|^2\right)^{\frac{1}{p}}.
    \end{align*}
    Thus, by $|U|^{p^*_\beta-2}\sim \left[\left(1+r^{\frac{p-\beta}{p-1}}\right)^{p-1}\right]^{\xi-1}$, we get
    \begin{small}
    \begin{align}\label{propcetlllx}
    & \int_{\mathbb{R}^N}|x|^{-\beta}
    (U+\varepsilon_nv_n)^{p^*_{\beta}-2}|v_n|^2\mathrm{d}x
    \nonumber\\ \leq & C\int_{\mathbb{R}^N}|\nabla w_n|^{p}
    \left[\left(1+|x|^{\frac{p-\beta}{p-1}}\right)^{p-1}\right]^{\xi}
    \mathrm{d}x
    \nonumber\\ \leq & C \int_{\mathbb{R}^N}|U|^{p^*_\beta-2}
    \left(1+|x|^{\frac{p-\beta}{p-1}}\right)^{p-1}
    \nonumber\\ & \quad \times \left[
    \left(1+\frac{\varepsilon_n v_n}{U}\right)^{-2}|v_n|^2
    \left(\frac{\varepsilon_n v_n |\nabla U|}{U^2}+\frac{\varepsilon_n |\nabla v_n|}{U}\right)^{p}
    +\left(1+\frac{\varepsilon_n v_n}{U}\right)^{p-2}|v_n|^{2-p}|\nabla v_n|^p
    \right]
    \mathrm{d}x
    \nonumber\\ \leq & C \int_{\mathbb{R}^N}|U|^{p^*_\beta-2}
    \left(1+|x|^{\frac{p-\beta}{p-1}}\right)^{p-1}
    \left[
    |v_n|^2\left(\frac{\zeta|\nabla U|}{U}+\frac{\varepsilon_n |\nabla v_n|}{U}\right)^{p}+|v_n|^{2-p}|\nabla v_n|^p
    \right]
    \mathrm{d}x,
    \end{align}
    \end{small}
    where, in the last inequality, we have used that $0\leq \frac{\varepsilon_n v_n}{U}\leq \zeta<1$.

    We now apply \eqref{nis} to the last integrand in \eqref{propcetlllx} with $\varepsilon=\varepsilon_n$, $r=|x|$, $a=|v_n|$, $b=|\nabla v_n|$. In this way, thanks to \eqref{propcetlllx} and since $U+\varepsilon_n v_n\leq 2U$, we deduce that for any $\varepsilon_0>0$ there exists $\zeta=\zeta(\varepsilon_0)\in (0,1)$ such that
    \begin{align*}
    \int_{\mathbb{R}^N}|x|^{-\beta}
    U^{p^*_{\beta}-2}|v_n|^2\mathrm{d}x
    \leq & 2^{2-p^*_\beta}\int_{\mathbb{R}^N}|x|^{-\beta}
    (U+\varepsilon_n v_n)^{p^*_{\beta}-2}|v_n|^2\mathrm{d}x
    \\ \leq & C\int_{\mathbb{R}^N}|\nabla w_n|^{p}
    \left[\left(1+|x|^{\frac{p-\beta}{p-1}}\right)^{p-1}\right]^{\xi}
    \mathrm{d}x
    \\ \leq & C\varepsilon_0\int_{\mathbb{R}^N}|x|^{-\beta}
    U^{p^*_{\beta}-2}|v_n|^2\mathrm{d}x
    \\ & + C(\varepsilon_0)\int_{\mathbb{R}^N}
    (|\nabla U|+\varepsilon_n|\nabla v_n|)^{p-2}|\nabla v_n|^2\mathrm{d}x.
    \end{align*}
    Thus, fixing $\varepsilon_0$ small enough such that $C\varepsilon_0\leq \frac{1}{2}$, it follows from \eqref{propcetll} and the inequality above that
    \begin{align}\label{propcetlllc}
    & \int_{\mathbb{R}^N}|x|^{-\beta}
    U^{p^*_{\beta}-2}|v_n|^2\mathrm{d}x
    + \int_{\mathbb{R}^N}|\nabla w_n|^{p}
    \left[\left(1+|x|^{\frac{p-\beta}{p-1}}\right)^{p-1}\right]^{\xi}
    \mathrm{d}x
    \nonumber\\ \leq & C \int_{\mathbb{R}^N}
    (|\nabla U|+\varepsilon_n|\nabla v_n|)^{p-2}|\nabla v_n|^2\mathrm{d}x
    \leq C.
    \end{align}
    In particular, the sequence $w_n$ is uniformly and locally bounded in $\mathcal{D}^{1,p}_0(\mathbb{R}^N)\subset L^{p^*}_\beta(\mathbb{R}^N)$. Since $1+\frac{\varepsilon_n v_n}{U}\sim 1$, this implies that $|v_n|^{\frac{2}{p}}$ is locally in $L^{p^*}_\beta(\mathbb{R}^N)$. Combining this higher integrability estimate with the a.e. convergence of $v_n$ to $v$, by dominated convergence we deduce that, for any $R>1$,
    \begin{align}\label{propcetlcB}
    \int_{B_R}|x|^{-\beta}
    \frac{(U+C_1\varepsilon_nv_n)^{p^*_{\beta}}}
    {U^2+|\varepsilon_nv_n|^2}|v_n|^2\mathrm{d}x
    \to \int_{B_R}|x|^{-\beta}
    U^{p^*_{\beta}-2}|v|^2\mathrm{d}x,
    \end{align}
    as $n\to \infty$ (recall that $\varepsilon_n\to 0$). 

    Also, since $1<p\leq \frac{2N}{N+2-\beta}$ and $0<\beta<p$ it follows that $N>\max\{\beta,2-\beta\}$, therefore
    \[
    \frac{(p-N)(p^*_\beta-2-p)}{p-1}-\beta+N=\frac{N-2p+\beta(N-p+1)}{p-1}
    =\frac{N+\beta N+\beta-(\beta+2)p}{p-1}>0.
    \]
    This allows us to apply Hardy-Poincar\'{e} inequality (see \cite[Lemma A.1]{FZ22}) to $v_n$ with
    \[
    \alpha=\frac{(N-p)(p^*_\beta-2-p)}{p-1}+\beta<N,
    \]
    namely,
    \[
    \int_{\mathbb{R}^N\setminus B_R}|v_n|^p|x|^{-\alpha}\mathrm{d}x
    \leq C \int_{\mathbb{R}^N\setminus B_R}|\nabla v_n|^p|x|^{-\alpha+p}\mathrm{d}x,\quad \forall R>1.
    \]
    Then similarly to \eqref{propcetlllx}, for any $R> 1$ we obtain
    \begin{align}\label{propcetlllxb}
    & \int_{\mathbb{R}^N\setminus B_R}|x|^{-\beta}
    \frac{(U+C_1\varepsilon_nv_n)^{p^*_{\beta}}}
    {U^2+|\varepsilon_nv_n|^2}|v_n|^2\mathrm{d}x
    \nonumber\\ \leq & C\int_{\mathbb{R}^N\setminus B_R}|x|^{-\beta}
    U^{p^*_{\beta}-2}\left(1+\frac{\varepsilon_n v_n}{U}\right)^{p-2}|v_n|^2\mathrm{d}x
    \nonumber\\ \leq & C \int_{\mathbb{R}^N\setminus B_R}|x|^{\frac{(p-N)(p^*_\beta-2-p)}{p-1}+p-\beta}
    \nonumber\\ & \quad \times \left[
    \left(1+\frac{\varepsilon_n v_n}{U}\right)^{-2}|v_n|^2
    \left(\frac{\varepsilon_n v_n |\nabla U|}{U^2}+\frac{\varepsilon_n |\nabla v_n|}{U}\right)^{p}
    +\left(1+\frac{\varepsilon_n v_n}{U}\right)^{p-2}|v_n|^{2-p}|\nabla v_n|^p
    \right]
    \mathrm{d}x
    \nonumber\\ \leq & C \int_{\mathbb{R}^N\setminus B_R}|x|^{\frac{(p-N)(p^*_\beta-2-p)}{p-1}+p-\beta}
    \left[
    |v_n|^2\left(\frac{\zeta|\nabla U|}{U}+\frac{\varepsilon_n |\nabla v_n|}{U}\right)^{p}+|v_n|^{2-p}|\nabla v_n|^p
    \right]
    \mathrm{d}x.
    \end{align}
    Then applying \eqref{nif} to the last integrand in \eqref{propcetlllxb} with $\varepsilon=\varepsilon_n$, $r=|x|$, $a=|v_n|$, $b=|\nabla v_n|$, we obtain that for any $\varepsilon'_0>0$ there exists $\zeta=\zeta(\varepsilon'_0)\in (0,1)$ such that
    \begin{small}
    \begin{align*}
    \int_{\mathbb{R}^N\setminus B_R}|x|^{-\beta}
    \frac{(U+C_1\varepsilon_nv_n)^{p^*_{\beta}}}
    {U^2+|\varepsilon_nv_n|^2}|v_n|^2\mathrm{d}x
    \leq & C\varepsilon'_0
    \int_{\mathbb{R}^N\setminus B_R}|x|^{-\beta}
    U^{p^*_{\beta}-2}|v_n|^2\mathrm{d}x
    \\ & + C(\varepsilon'_0)R^{-\frac{p-\beta}{p-1}}
    \int_{\mathbb{R}^N\setminus B_R}(|\nabla U|+\varepsilon_n|\nabla v_n|)^{p-2}|\nabla v_n|^2\mathrm{d}x
    \\ \leq & C\varepsilon'_0
    \int_{\mathbb{R}^N\setminus B_R}|x|^{-\beta}
    \frac{(U+C_1\varepsilon_nv_n)^{p^*_{\beta}}}
    {U^2+|\varepsilon_nv_n|^2}|v_n|^2\mathrm{d}x
    \\ & + C(\varepsilon'_0)R^{-\frac{p-\beta}{p-1}}
    \int_{\mathbb{R}^N\setminus B_R}(|\nabla U|+\varepsilon_n|\nabla v_n|)^{p-2}|\nabla v_n|^2\mathrm{d}x.
    \end{align*}
    \end{small}
    Thus, by fixing $\varepsilon'_0$ small enough such that $C\varepsilon'_0\leq \frac{1}{2}$, it follows from \eqref{propcetll} and the inequality above that
    \begin{align*}
    \int_{\mathbb{R}^N\setminus B_R}|x|^{-\beta}
    \frac{(U+C_1\varepsilon_nv_n)^{p^*_{\beta}}}
    {U^2+|\varepsilon_nv_n|^2}|v_n|^2\mathrm{d}x
    \leq & CR^{-\frac{p-\beta}{p-1}}
    \int_{\mathbb{R}^N\setminus B_R}(|\nabla U|+\varepsilon_n|\nabla v_n|)^{p-2}|\nabla v_n|^2\mathrm{d}x
    \\ \leq & CR^{-\frac{p-\beta}{p-1}}.
    \end{align*}
    Combining this bound with \eqref{propcetlllc} and \eqref{propcetlcB}, by the arbitrariness of $R$ we conclude that $v\in L^2_{\beta,*}(\mathbb{R}^N)$ and that \eqref{propcetlc} holds. This concludes the proof under assumption that $\varepsilon_n v_n\leq \zeta U$ with $\zeta=\zeta(N,p,\beta,C_1)>0$ sufficiently small.

    $\bullet$ {\em \color{blue} Step 2: proof of \eqref{propcetlc} in the general case}. Throughout this part, we assume that $\zeta=\zeta(N,p,\beta,C_1)>0$ is a small constant such that Step 1 applies.

    Same as \cite[Lemma 3.4]{FZ22}, we have
    \begin{align}\label{spsccy}
    C\varepsilon_n^{-2}\int_{\{\varepsilon_n\nabla v_n>\zeta U\}}|\nabla U|^p
    \mathrm{d}x
    \leq & \int_{\{\varepsilon_n\nabla v_n>\zeta U\}}(|\nabla U|+\varepsilon_n|\nabla v_n|)^{p-2}|\nabla v_n|^2\mathrm{d}x.
    \end{align}
    For readers' convenience, we explain the reason.
    Observe that, $\zeta U$ is a supersolution for the operator
    \[
    L_U[\varphi]:=-\mathrm{div}\left((|\nabla U|+|\nabla\varphi|)^{p-2}\nabla \varphi +(p-2)(|\nabla U|+|\nabla\varphi|)^{p-3}|\nabla\varphi|\nabla \varphi\right),
    \]
    namely $L_U[\zeta U]\geq 0$. Therefore, multiplying $L_U[\zeta U]\geq 0$ by $(\varepsilon_n v_n-\zeta U)_+$ and integrating by parts, we get
    \begin{align}\label{spsc}
    & \int_{\mathbb{R}^N}
    (|\nabla U|+\zeta|\nabla\varphi|)^{p-2}\zeta \nabla U\cdot\nabla (\varepsilon_n v_n-\zeta U)_+
    \mathrm{d}x
    \nonumber\\
     & \quad+(p-2)\int_{\mathbb{R}^N}
    (|\nabla U|+\zeta|\nabla\varphi|)^{p-3}\zeta^2 |\nabla U|\nabla U\cdot\nabla (\varepsilon_n v_n-\zeta U)_+
    \mathrm{d}x\geq 0.
    \end{align}
    Also, by the convexity of
    \[
    \mathbb{R}^N\ni z\mapsto F_t(z):=(t+|z|)^{p-2}|z|^2,\quad t\geq 0,
    \]
    we have
    \[
    F_t(z)+\nabla F_t(z)\cdot(z'-z)\leq F_t(z'),\quad \forall z,z'\in\mathbb{R}^N,\quad t\geq 0.
    \]
    Hence, applying this inequality with $t=|\nabla U|$, $z=\zeta\nabla U$, and $z'=\varepsilon_n\nabla v_n$, it follows by \eqref{spsc} that
    \begin{small}
    \begin{align*}
    C\varepsilon_n^{-2}\int_{\{\varepsilon_n\nabla v_n>\zeta U\}}|\nabla U|^p
    \mathrm{d}x
    \leq & \varepsilon_n^{-2}\int_{\{\varepsilon_n\nabla v_n>\zeta U\}}(|\nabla U|+\zeta|\nabla U|)^{p-2}\zeta^2|\nabla U|^2
    \mathrm{d}x
    \nonumber\\
    \leq &\varepsilon_n^{-2}\int_{\{\varepsilon_n\nabla v_n>\zeta U\}}(|\nabla U|+\varepsilon_n|\nabla v_n|)^{p-2}\varepsilon_n^{2}|\nabla v_n|^2\mathrm{d}x
    \nonumber\\
     & +2\varepsilon_n^{-2}\int_{\{\varepsilon_n\nabla v_n>\zeta U\}}(|\nabla U|+\zeta|\nabla U|)^{p-2}\zeta\nabla U\cdot\nabla(\varepsilon_n\nabla v_n-\zeta U)_+
    \mathrm{d}x
    \nonumber\\
     & +(p-2)\varepsilon_n^{-2}\int_{\{\varepsilon_n\nabla v_n>\zeta U\}}
    (|\nabla U|+\zeta|\nabla U|)^{p-3}
    \nonumber\\
     & \quad \times\zeta^2 |\nabla U|\nabla U\cdot\nabla (\varepsilon_n v_n-\zeta U)_+
    \mathrm{d}x
    \nonumber \\
    \leq & \int_{\{\varepsilon_n\nabla v_n>\zeta U\}}(|\nabla U|+\varepsilon_n|\nabla v_n|)^{p-2}|\nabla v_n|^2\mathrm{d}x,
    \end{align*}
    \end{small}
    thus \eqref{spsccy} holds.
    We now write $v_n=v_{n,1}+v_{n,2}$, where
    \begin{align}\label{defvn12}
    v_{n,1}:=\min\left\{v_n, \frac{\zeta U}{\varepsilon_n}\right\}
    ,\quad v_{n,2}:=v_n-v_{n,1}.
    \end{align}
    Note that, as a consequence of \eqref{propcetll} and \eqref{spsccy}, \begin{align}\label{spsccc}
    & \int_{\mathbb{R}^N}(|\nabla U|+\varepsilon_n|\nabla v_{n,1}|)^{p-2}|\nabla v_{n,1}|^2\mathrm{d}x
    + \int_{\mathbb{R}^N}(|\nabla U|+\varepsilon_n|\nabla v_{n,2}|)^{p-2}|\nabla v_{n,2}|^2\mathrm{d}x
    \nonumber \\
    \leq & C \int_{\mathbb{R}^N}(|\nabla U|+\varepsilon_n|\nabla v_{n}|)^{p-2}|\nabla v_{n}|^2\mathrm{d}x
    \leq C.
    \end{align}
    Hence, it follows by the analogue of \eqref{propcetllb} that
    \begin{align}\label{propcetllbg}
    \int_{\mathbb{R}^N}|\nabla v_{n,1}|^p\mathrm{d}x
    + \int_{\mathbb{R}^N}|\nabla v_{n,2}|^p\mathrm{d}x
    \leq C.
    \end{align}
    In particular we deduce that $v_{n,2}\rightharpoonup 0$ in $\mathcal{D}^{1,p}_0(\mathbb{R}^N)$ (since $|\{\varepsilon_n\nabla v_n>\zeta U\}\cap B_R|\to 0$ for any $R>1$) and that, up to a subsequence, both $v_n$ and $v_{n,1}$ converge weakly in $\mathcal{D}^{1,p}_0(\mathbb{R}^N)$ and also a.e. to the same function $v\in \mathcal{D}^{1,p}_0(\mathbb{R}^N)$.

    Let $\eta=\eta(N,p,\beta)>0$ be a small exponent to be fixed. We analyze two cases.

    {\em Case 1}. If
    \begin{align*}
    \int_{\{\varepsilon_n\nabla v_n>\zeta U\}}|x|^{-\beta}|v_{n,1}|^{p^*_\beta}\mathrm{d}x
    > & \varepsilon^{-\eta}_n\int_{\{\varepsilon_n\nabla v_n>\zeta U\}}|x|^{-\beta}\left(v_n-\frac{\zeta U}{\varepsilon_n}\right)_+^{p^*_\beta}\mathrm{d}x
    \nonumber \\
    = & \varepsilon^{-\eta}_n\int_{\{\varepsilon_n\nabla v_n>\zeta U\}}|x|^{-\beta}|v_{n,2}|^{p^*_\beta}\mathrm{d}x,
    \end{align*}
    since $v$ is also the limit of $v_{n,1}$, we can apply Step 1 to $v_{n,1}$ to deduce that $v\in L^2_{\beta,*}(\mathbb{R}^N)$ and
    \begin{align*}
    \int_{\mathbb{R}^N}|x|^{-\beta}
    \frac{(U+C_1\varepsilon_nv_n)^{p^*_{\beta}}}
    {U^2+|\varepsilon_nv_n|^2}|v_n|^2\mathrm{d}x
    = & \left(1+O(\varepsilon_n^\eta)\right)
    \int_{\mathbb{R}^N}|x|^{-\beta}
    \frac{(U+C_1\varepsilon_nv_{n,1})^{p^*_{\beta}}}
    {U^2+|\varepsilon_nv_{n,1}|^2}|v_{n,1}|^2\mathrm{d}x
    \\ \to & \int_{\mathbb{R}^N}|x|^{-\beta} U^{p^*_{\beta}-2}|v|^2\mathrm{d}x,
    \end{align*}
    which proves \eqref{propcetlc}.

    {\em Case 2}. Assume now
    \begin{align}\label{conditionl}
    \int_{\{\varepsilon_n\nabla v_n>\zeta U\}}|x|^{-\beta}|v_{n,1}|^{p^*_\beta}\mathrm{d}x
    \leq \varepsilon^{-\eta}_n\int_{\{\varepsilon_n\nabla v_n>\zeta U\}}|x|^{-\beta}|v_{n,2}|^{p^*_\beta}\mathrm{d}x.
    \end{align}
    We claim that
    \begin{align}\label{claimvn2s}
    \varepsilon^{p^*_\beta-2}_n
    \int_{\mathbb{R}^N}|x|^{-\beta}|v_{n,2}|^{p^*_\beta}\mathrm{d}x
    =O(\varepsilon_n^\eta).
    \end{align}
    To prove this, we denote $D_n:=\{\varepsilon_n\nabla v_n>\zeta U\}$ and also define
    \[
    E_n:=\left\{|\nabla v_{n,2}|\leq \frac{|\nabla U|}{\varepsilon_n}\right\}\cap D_n,
    \quad F_n:=\left\{|\nabla v_{n,2}|>\frac{|\nabla U|}{\varepsilon_n}\right\}\cap D_n.
    \]
    Then, since  $v_{n,2}=v_{n}-v_{n,1}\equiv 0$ inside $\mathbb{R}^N\setminus D_n$, $D_n=E_n\cup F_n$, and $|\nabla U|+\varepsilon_n |\nabla v_{n,2}|\leq 2|\nabla U|$ inside $E_n$, it follows by H\"{o}lder inequality that
    \begin{small}
    \begin{align}\label{vn2pl}
    \int_{\mathbb{R}^N}|\nabla v_{n,2}|^p\mathrm{d}x
    = & \int_{D_n}|\nabla v_{n,2}|^p\mathrm{d}x
    + \int_{\mathbb{R}^N\setminus D_n}|\nabla v_{n,2}|^p\mathrm{d}x
    \nonumber \\
    = & \int_{E_n}|\nabla v_{n,2}|^p\mathrm{d}x
    + \int_{F_n}|\nabla v_{n,2}|^p\mathrm{d}x
    \nonumber \\
    \leq & \left(\int_{E_n}|\nabla U|^{p-2}|\nabla v_{n,2}|^{2}\mathrm{d}x\right)^{\frac{p}{2}}
    \left(\int_{E_n}|\nabla U|^{p}\mathrm{d}x\right)^{1-\frac{p}{2}}
    + \int_{F_n}|\nabla v_{n,2}|^p\mathrm{d}x
    \nonumber \\
    \leq & C\left[\int_{E_n}(|\nabla U|+\varepsilon_n |\nabla v_{n,2}|)^{p-2}|\nabla v_{n,2}|^{2}\mathrm{d}x\right]^{\frac{p}{2}}
    \left(\int_{E_n}|\nabla U|^{p}\mathrm{d}x\right)^{1-\frac{p}{2}}
    + \int_{F_n}|\nabla v_{n,2}|^p\mathrm{d}x.
    \end{align}
    \end{small}
    Also, using \eqref{UnUe} and condition \eqref{conditionl} together with H\"{o}lder inequality,
    taking
    \begin{align}\label{defQn}
    Q:=\frac{pN-\beta}{p(N-1)-(p-\beta)\nu}>1
    \end{align}
    for any $\nu$ satisfying $\frac{(p-1)N}{N-\beta}<\nu<\frac{p(N-1)}{p-\beta}$ (since $\beta<p<N$), 
     we deduce
    \begin{align}\label{vn2pl1}
    \int_{E_n}|\nabla U|^{p}\mathrm{d}x
    \leq & C \int_{E_n}(1+|x|^{\frac{p-\beta}{p-1}})
    ^{-\frac{(N-\beta)p}{p-\beta}}|x|^{\frac{(1-\beta)p}{p-1}}
    \mathrm{d}x
    \nonumber \\
    \leq & C \left[\int_{E_n}\left((1+|x|^{\frac{p-\beta}{p-1}})
    ^{-\frac{(N-\beta)p}{p-\beta}+\nu}|x|^{\frac{(1-\beta)p}{p-1}}\right)
    ^{Q}
    \mathrm{d}x\right]^{\frac{1}{Q}}
    \nonumber \\ & \quad \times\left(\int_{E_n}(1+|x|^{\frac{p-\beta}{p-1}})
    ^{-\frac{\nu Q}{Q-1}}
    \mathrm{d}x\right)^{\frac{Q-1}{Q}}
    \nonumber \\
    \leq & C \left[\int_{D_n}\left(\frac{\varepsilon_n v_n }{\zeta U}\right)^{p^*_\beta}\left((1+|x|^{\frac{p-\beta}{p-1}})
    ^{-\frac{(N-\beta)p}{p-\beta}+\nu}|x|^{\frac{(1-\beta)p}{p-1}}\right)
    ^{Q}
    \mathrm{d}x\right]^{\frac{1}{Q}}
    \nonumber \\
    \leq & C \left(\varepsilon_n^{p^*_\beta}\int_{D_n}|x|^{-\beta}|v_n|^{p^*_\beta}
    \mathrm{d}x\right)^{\frac{1}{Q}}
    \nonumber \\
    \leq & C \left(\varepsilon_n^{p^*_\beta-\eta}\int_{D_n}|x|^{-\beta}|v_{n,2}|^{p^*_\beta}
    \mathrm{d}x\right)^{\frac{1}{Q}},
    \end{align}
    where we used that $\frac{(p-\beta)}{(p-1)}\frac{\nu Q}{Q-1}>N$  and that
    \[
    U^{-p^*_\beta}\left((1+|x|^{\frac{p-\beta}{p-1}})
    ^{-\frac{(N-\beta)p}{p-\beta}+\nu}|x|^{\frac{(1-\beta)p}{p-1}}\right)
    ^{Q}\leq C |x|^{-\beta}.
    \]
    Therefore, introducing the notation
    \[
    N_{n,2}:=\int_{E_n}(|\nabla U|+\varepsilon_n |\nabla v_{n,2}|)^{p-2}|\nabla v_{n,2}|^{2}\mathrm{d}x,
    \]
    by Hardy-Sobolev inequality, \eqref{vn2pl}, and \eqref{vn2pl1}, we deduce that
    \begin{small}
    \begin{align}\label{vn2pl2}
    \varepsilon^{p^*_\beta-2}_n
    \int_{\mathbb{R}^N}|x|^{-\beta}|v_{n,2}|^{p^*_\beta}\mathrm{d}x
    \leq & C \varepsilon^{p^*_\beta-2}_n
    \left(\int_{\mathbb{R}^N}|\nabla v_{n,2}|^p
    \mathrm{d}x\right)^{\frac{p^*_\beta}{p}}
    \nonumber \\
    \leq & C \varepsilon^{p^*_\beta-2}_n
    \left[
    N_{n,2}^{\frac{p^*_\beta}{2}}\left(\int_{E_n}|\nabla U|^p
    \mathrm{d}x\right)^{\frac{(2-p)p^*_\beta}{2p}}
    + \left(\int_{F_n}|\nabla v_{n,2}|^p
    \mathrm{d}x\right)^{\frac{p^*_\beta}{p}}
    \right]
    \nonumber \\
    \leq & C \varepsilon^{p^*_\beta-2}_n
    \Bigg[
    N_{n,2}^{\frac{p^*_\beta}{2}}
    \left(\varepsilon^{p^*_\beta-\eta}_n
    \int_{D_n}|x|^{-\beta}|v_{n,2}|^{p^*_\beta}
    \mathrm{d}x\right)^{\frac{(2-p)p^*_\beta}{2pQ}}
    \nonumber \\ & \quad + \int_{F_n}|\nabla v_{n,2}|^p
    \mathrm{d}x
    \Bigg],
    \end{align}
    \end{small}
    where in the last inequality we used \eqref{propcetllbg} and the fact $\frac{p^*_\beta}{p}\geq 1$.

    Suppose first that
    \[
    \int_{F_n}|\nabla v_{n,2}|^p
    \mathrm{d}x
    \geq
    N_{n,2}^{\frac{p^*_\beta}{2}}
    \left(\varepsilon^{p^*_\beta-\eta}_n
    \int_{D_n}|x|^{-\beta}|v_{n,2}|^{p^*_\beta}
    \mathrm{d}x\right)^{\frac{(2-p)p^*_\beta}{2pQ}}.
    \]
    Then, since $|\nabla U|\leq \varepsilon_n|\nabla v_{n,2}|\sim \varepsilon_n|\nabla v_n|$ inside $F_n$ (recall that $\zeta<1$ small), \eqref{propcetll}, and \eqref{vn2pl2} yield
    \begin{align}\label{vn2pl3}
    \varepsilon^{p^*_\beta-2}_n
    \int_{\mathbb{R}^N}|x|^{-\beta}|v_{n,2}|^{p^*_\beta}\mathrm{d}x
    \leq & C \varepsilon^{p^*_\beta-2}_n
    \int_{F_n}|\nabla v_{n,2}|^p
    \mathrm{d}x
    \nonumber \\
    = & \varepsilon^{p^*_\beta-p}_n
    \int_{F_n}(\varepsilon_n|\nabla v_{n,2}|)^{p-2}|\nabla v_{n,2}|^2
    \mathrm{d}x
    \nonumber \\
    \leq & C \varepsilon^{p^*_\beta-p}_n
    \int_{F_n}(|\nabla U|+\varepsilon_n|\nabla v_{n,2}|)^{p-2}|\nabla v_{n,2}|^2
    \mathrm{d}x,
    \end{align}
    which proves \eqref{claimvn2s} choosing $\eta\leq p^*_\beta-p$ (recall \eqref{spsccc}).

    Then consider instead the case
    \[
    \int_{F_n}|\nabla v_{n,2}|^p
    \mathrm{d}x
    <
    N_{n,2}^{\frac{p^*_\beta}{2}}
    \left(\varepsilon^{p^*_\beta-\eta}_n
    \int_{D_n}|x|^{-\beta}|v_{n,2}|^{p^*_\beta}
    \mathrm{d}x\right)^{\frac{(2-p)p^*_\beta}{2pQ}}.
    \]
    Set $\theta:=\frac{(2-p)p^*_\beta}{2pQ}$ where $Q$ depends on $\nu$ given in \eqref{defQn}, so that \eqref{vn2pl2} yields
    \begin{align*}
    \varepsilon^{p^*_\beta-2}_n
    \int_{\mathbb{R}^N}|x|^{-\beta}|v_{n,2}|^{p^*_\beta}\mathrm{d}x
    \leq & C \varepsilon^{p^*_\beta-2}_n N_{n,2}^{\frac{p^*_\beta}{2}}
    \left(\varepsilon^{p^*_\beta-\eta}_n
    \int_{D_n}|x|^{-\beta}|v_{n,2}|^{p^*_\beta}
    \mathrm{d}x\right)^\theta
    \nonumber \\
    = & C \varepsilon^{p^*_\beta-2+(2-\eta)\theta}_n N_{n,2}^{\frac{p^*_\beta}{2}}
    \left(\varepsilon^{p^*_\beta-2}_n
    \int_{D_n}|x|^{-\beta}|v_{n,2}|^{p^*_\beta}
    \mathrm{d}x\right)^\theta.
    \end{align*}
    We need $1-\frac{p^*_\beta}{2}<\theta<1$ which is equivalent to
    \begin{align*}
    &\nu_1:=\frac{p(N-1)(2-p)(N-\beta)-2(N-p)(Np-\beta)}{(p-\beta)(2-p)(N-\beta)}
    <\nu<
    \\ & \quad
    \frac{p(N-1)(2-p)(N-\beta)-(Np-\beta)[2(N-p)-p(N-\beta)]}
    {(p-\beta)(2-p)(N-\beta)}=:\nu_2,
    \end{align*}
    and since $1<p\leq \frac{2N}{N+2-\beta}$ and $0<\beta<p<2$, it is easy to verify that
    \[
    \nu_1<
    \frac{p(N-1)}{p-\beta}\quad \mbox{and}\quad
    \nu_2>
    \frac{(p-1)N}{N-\beta},
    \]
    we can always choose suitable $\nu$ satisfying $\frac{(p-1)N}{N-\beta}<\nu<\frac{p(N-1)}{p-\beta}$ such that $1-\frac{p^*_\beta}{2}<\theta<1$. In fact, taking
    \[
    \nu=\frac{1}{4}\left(\frac{(p-1)N}{N-\beta}+\frac{p(N-1)}{p-\beta}
    +\nu_1+\nu_2\right),
     \]
    this condition holds.
     Then recalling the definition $N_{n,2}$ and \eqref{spsccc}, this gives
    \begin{align}\label{vn2pl4}
    \varepsilon^{p^*_\beta-2}_n
    \int_{\mathbb{R}^N}|x|^{-\beta}|v_{n,2}|^{p^*_\beta}\mathrm{d}x
    \leq & C \varepsilon_n^{\frac{p^*_\beta-2+(2-\eta)\theta}{1-\theta}} \left(\int_{E_n}(|\nabla U|+\varepsilon_n|\nabla v_{n,2}|)^{p-2}|\nabla v_{n,2}|^2
    \mathrm{d}x\right)^{\frac{p^*_\beta}{2(1-\theta)}}
    \nonumber \\
    \leq & C \varepsilon_n^{\eta}
    \int_{E_n}(|\nabla U|+\varepsilon_n|\nabla v_{n,2}|)^{p-2}|\nabla v_{n,2}|^2
    \mathrm{d}x,
    \end{align}
    where the last inequality follows by choosing $\eta>0$ sufficiently small (notice that $p^*_\beta-2+2\theta>0$ and $\frac{p^*_\beta}{2(1-\theta)}>1$). This proves \eqref{claimvn2s} also in this case.

    Now, combining \eqref{conditionl} and \eqref{claimvn2s}, we finally obtain
    \begin{align*}
    & \left|\int_{\mathbb{R}^N}|x|^{-\beta}
    \frac{(U+C_1\varepsilon_nv_n)^{p^*_{\beta}}}
    {U^2+|\varepsilon_nv_n|^2}|v_n|^2\mathrm{d}x
    - \int_{\mathbb{R}^N}|x|^{-\beta}
    \frac{(U+C_1\varepsilon_nv_{n,1})^{p^*_{\beta}}}
    {U^2+|\varepsilon_nv_{n,1}|^2}|v_{n,1}|^2\mathrm{d}x
    \right|
    \\
    \leq  & C\left(
    \varepsilon^{p^*_\beta-2}_n
    \int_{D_n}|x|^{-\beta}|v_{n,2}|^{p^*_\beta}\mathrm{d}x
    + \varepsilon^2_n \int_{D_n}|x|^{-\beta}
    \frac{(U+C_1\zeta U)^{p^*_{\beta}}}
    {U^2+|\zeta U|^2}|\zeta U|^2\mathrm{d}x
    \right)
    \\ = & O(\varepsilon_n^\eta)+O(\varepsilon_n^2)
    =o(1).
    \end{align*}
    Thanks to this estimate, and since $v$ is also the limit of $v_{n,1}$, applying Step 1 to $v_{n,1}$ we conclude the proof of this lemma.
    \end{proof}

    An important consequence of Lemma \ref{propcetl} is the following weighted Orlicz-type Poincar\'{e} inequality:
    \begin{corollary}\label{propcetlpi}
    Let $1<p\leq \frac{2N}{N+2-\beta}$ with $0<\beta<p$. There exists $\varepsilon_0=\varepsilon_0(N,p,\beta)>0$ small such that the following holds:
    For any $\varepsilon \in (0,\varepsilon_0)$ and any radial function $v\in \mathcal{D}^{1,p}_{0}(\mathbb{R}^N)\cap \mathcal{D}^{1,2}_{0,*}(\mathbb{R}^N)$ satisfying
    \begin{align*}
    & \int_{\mathbb{R}^N}(|\nabla U|+\varepsilon|\nabla v|)^{p-2}|\nabla v|^2\mathrm{d}x\leq 1,
    \end{align*}
    we have
    \begin{align}\label{propcetlcpi}
    \int_{\mathbb{R}^N}|x|^{-\beta}
    (U+ \varepsilon v )^{p^*_{\beta}-2}|v |^2\mathrm{d}x
    \leq C(N,p,\beta)
    \int_{\mathbb{R}^N}(|\nabla U|+\varepsilon|\nabla v|)^{p-2}|\nabla v|^2\mathrm{d}x.
    \end{align}
    \end{corollary}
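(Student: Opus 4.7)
My plan is to derive the corollary from Lemma \ref{propcetl} via a contradiction-and-compactness argument. Assume the conclusion fails; then there exist sequences $\varepsilon_n\downarrow 0$ and radial $v_n\in\mathcal{D}^{1,p}_0(\mathbb{R}^N)\cap\mathcal{D}^{1,2}_{0,*}(\mathbb{R}^N)$ with
\[
\int_{\mathbb{R}^N}(|\nabla U|+\varepsilon_n|\nabla v_n|)^{p-2}|\nabla v_n|^2\mathrm{d}x\leq 1, \qquad \int_{\mathbb{R}^N}|x|^{-\beta}(U+\varepsilon_n v_n)^{p^*_\beta-2}|v_n|^2\mathrm{d}x\geq n.
\]

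The key observation is that, when $v_n\geq 0$, one has $(U+\varepsilon_n v_n)^2\sim U^2+(\varepsilon_n v_n)^2$, and consequently
\[
(U+\varepsilon_n v_n)^{p^*_\beta-2}=\frac{(U+\varepsilon_n v_n)^{p^*_\beta}}{(U+\varepsilon_n v_n)^2}\sim \frac{(U+\varepsilon_n v_n)^{p^*_\beta}}{U^2+(\varepsilon_n v_n)^2}.
\]
Thus for non-negative $v_n$ the corollary's left-hand side is comparable, up to constants depending only on $N,p,\beta$, to the integral appearing in \eqref{propcetlc}. Applying Lemma \ref{propcetl} with $C_1=1$ (whose hypothesis \eqref{propcetll} is exactly our energy bound) produces a subsequence along which $v_n\rightharpoonup v$ weakly in $\mathcal{D}^{1,p}_0(\mathbb{R}^N)$ with $v\in\mathcal{D}^{1,p}_0(\mathbb{R}^N)\cap L^2_{\beta,*}(\mathbb{R}^N)$, and
\[
\int_{\mathbb{R}^N}|x|^{-\beta}\frac{(U+\varepsilon_n v_n)^{p^*_\beta}}{U^2+(\varepsilon_n v_n)^2}|v_n|^2\mathrm{d}x\longrightarrow \int_{\mathbb{R}^N}|x|^{-\beta}U^{p^*_\beta-2}|v|^2\mathrm{d}x<+\infty.
\]
Combined with the pointwise equivalence above, this forces the left-hand side to stay uniformly bounded in $n$, contradicting the assumption.

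The main technical obstacle is the treatment of the set where $v_n$ is negative. Since $p^*_\beta-2\leq 0$ in the regime $1<p\leq\frac{2N}{N+2-\beta}$, the factor $(U+\varepsilon_n v_n)^{p^*_\beta-2}$ can exceed $(U+\varepsilon_n|v_n|)^{p^*_\beta-2}$ on $\{v_n<0\}$ and could even blow up as $U+\varepsilon_n v_n\to 0$, so one cannot simply reduce to $|v_n|$. I would fix a small threshold $\zeta=\zeta(N,p,\beta)\in(0,1)$ and split $\mathbb{R}^N$ into $\{\varepsilon_n|v_n|\leq \zeta U\}$ and $\{\varepsilon_n|v_n|>\zeta U\}$: on the first region $U+\varepsilon_n v_n\sim U$ keeps $(U+\varepsilon_n v_n)^{p^*_\beta-2}$ under control and the argument above applies verbatim; on the second region I would imitate the splitting $v_n=v_{n,1}+v_{n,2}$ from Step 2 of the proof of Lemma \ref{propcetl}, combined with the radial structure (which via the fundamental theorem of calculus gives pointwise bounds on $v_n$ from the energy), to show that this region's contribution is $o(1)$ once $\varepsilon_0$ is chosen sufficiently small. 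Running the contradiction argument with $\varepsilon<\varepsilon_0$ then closes the proof.
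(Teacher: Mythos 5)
Your skeleton---argue by contradiction, note that for nonnegative $v$ one has $(U+\varepsilon v)^{p^*_\beta-2}|v|^2\sim \frac{(U+\varepsilon v)^{p^*_\beta}}{U^2+\varepsilon^2v^2}|v|^2$, and invoke Lemma \ref{propcetl} with $C_1=1$---is indeed the route the paper has in mind (its proof is a citation to the corresponding contradiction/compactness argument in Figalli--Zhang). But as written your proposal has two genuine gaps. The first is the set-up of the contradiction: negating the corollary with constant $C=n$ and threshold $\varepsilon_0=1/n$ gives $\varepsilon_n<1/n$ and $v_n$ with $E_n:=\int_{\mathbb{R}^N}(|\nabla U|+\varepsilon_n|\nabla v_n|)^{p-2}|\nabla v_n|^2\,\mathrm{d}x\le 1$ and $\mathrm{LHS}_n>n\,E_n$, \emph{not} $\mathrm{LHS}_n\ge n$; if $E_n\to0$ your hypotheses give nothing, and the argument you describe only yields a uniform bound on the left side under the constraint $E\le1$, which is strictly weaker than \eqref{propcetlcpi}. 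The fix is the invariance of both sides of \eqref{propcetlcpi} under $(v,\varepsilon)\mapsto(tv,\varepsilon/t)$: choosing $t=E_n^{-1/2}\ge1$ normalizes the energy to $1$, replaces $\varepsilon_n$ by $\varepsilon_nE_n^{1/2}\le\varepsilon_n\to0$, and turns the violation into $\mathrm{LHS}_n>n$, after which Lemma \ref{propcetl} (applied along a subsequence) gives the contradiction. You never invoke this scaling, and without it the proof does not close.

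The second gap is your treatment of the ``negative'' region. Since $p^*_\beta-2<0$, on the part of $\{\varepsilon_n|v_n|>\zeta U\}$ where $\varepsilon_n v_n\le -U$ the factor $(U+\varepsilon_n v_n)^{p^*_\beta-2}$ is infinite or undefined, and no splitting $v_n=v_{n,1}+v_{n,2}$ nor radial pointwise bound can make that contribution $o(1)$: take a smooth radial $v$ supported in a far annulus with $\varepsilon v=-U$ on $\{R<|x|<2R\}$; its energy is of order $\varepsilon^{-2}\int_{\{R/2<|x|<4R\}}|\nabla U|^p\,\mathrm{d}x$, hence $\le1$ once $R$ is large (for any fixed small $\varepsilon$), while the left-hand side of \eqref{propcetlcpi} is $+\infty$. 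Note also that the radial bound $|v(r)|\lesssim r^{-(N-p)/p}$ decays \emph{more slowly} than $U\sim r^{-(N-p)/(p-1)}$, so it cannot exclude this region. In other words, the inequality as literally written is false for sign-changing $v$; it must be read, exactly as it is used in \eqref{evbcb2lib}, with $|v|$ in place of $v$ (i.e.\ for nonnegative $v$). Once $v\ge0$ your ``main technical obstacle'' disappears entirely: the pointwise comparability holds on all of $\mathbb{R}^N$, the delicate analysis of $\{\varepsilon_n v_n>\zeta U\}$ is already built into Step 2 of the proof of Lemma \ref{propcetl}, and no separate bad-region argument (and in fact no radial symmetry) is needed. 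So the correct repair is not the extra machinery you sketch, but the reduction to nonnegative $v$ plus the scaling normalization above.
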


    \begin{proof}
    Based on Lemma \ref{propcetl}, this proof can be deduced directly from the proof of \cite[Corollary 3.5]{FZ22} with minor changes, so we omit it.
    \end{proof}

\section{{\bfseries Non-degenerate result}}\label{sectndr}

    First of all, let us rewrite the linear equation (\ref{Ppwhla}) as
    \begin{align}\label{rwlp}
    & -|x|^2\Delta v -(p-2) \sum^{N}_{i,j=1}\frac{\partial^2 v}{\partial x_i\partial x_j}x_i x_j
    -\frac{(p-2)(N-\beta)}{1+|x|^{\frac{p-\beta}{p-1}}} (x\cdot\nabla v) \nonumber\\
    = & (p^*_{\beta}-1)C_{N,p,\beta}^{p^*_{\beta}-p}
    \left(\frac{N-p}{p-1}\right)^{2-p}
    \frac{|x|^{\frac{p-\beta}{p-1}}}{(1+|x|^{\frac{p-\beta}{p-1}})^2}v
    \quad \mbox{in}\quad \mathbb{R}^N,\quad v\in L^2_{\beta,*}(\mathbb{R}^N).
    \end{align}
    Indeed a straightforward computation shows that
    \begin{align*}
    & \mathrm{div}(|\nabla U|^{p-2}\nabla v)+(p-2)\mathrm{div}(|\nabla U|^{p-4}(\nabla U\cdot\nabla v)\nabla U) \nonumber\\
    = & |\nabla U|^{p-2}\Delta v + \nabla(|\nabla U|^{p-2})\cdot\nabla v
     + (p-2)|\nabla U|^{p-4}(\nabla U\cdot\nabla v)\Delta U \nonumber\\
     & + (p-2)(\nabla U\cdot\nabla v) (\nabla (|\nabla U|)\cdot\nabla U)
     + (p-2)|\nabla U| (\nabla (\nabla U \cdot \nabla v)\cdot\nabla U) \nonumber\\
    = & |\nabla U|^{p-2}\Delta v
        +(p-2)(p-4)|\nabla U|^{p-6}(\nabla U\cdot\nabla v) (\nabla U \nabla(\nabla U) \cdot\nabla U)
        \nonumber\\
      &
      + (p-2)|\nabla U|^{p-4}\left[ (\nabla U\cdot\nabla v)\Delta U +2(\nabla U\nabla (\nabla U)\cdot\nabla v)
       + (\nabla U\nabla(\nabla v) \cdot \nabla U)\right],
    \end{align*}
    and
    \begin{align*}
    \nabla U
    = & -\frac{c_{N,p}|x|^{\frac{2-p-\beta}{p-1}}x}
    {(1+|x|^{\frac{p-\beta}{p-1}})^{\frac{N-\beta}{p-\beta}}},  \quad (x\cdot\nabla U)
    = -\frac{c_{N,p}|x|^{\frac{p-\beta}{p-1}}}
    {(1+|x|^{\frac{p-\beta}{p-1}})^{\frac{N-\beta}{p-\beta}}}, \nonumber\\
    (\nabla U\cdot\nabla v)
    = & -\frac{c_{N,p}|x|^{\frac{2-p-\beta}{p-1}}(x\cdot\nabla v)}
    {(1+|x|^{\frac{p-\beta}{p-1}})^{\frac{N-\beta}{p-\beta}}}.
    \end{align*}
    Moreover,
    \begin{align*}
    \Delta U
    = & \frac{-c_{N,p}}
    {(1+|x|^{\frac{p-\beta}{p-1}})^{\frac{N-\beta}{p-\beta}}}
    \left\{\left(\frac{2-p-\beta}{p-1}+N\right)|x|^{\frac{2-p-\beta}{p-1}}
    -\frac{\frac{N-\beta}{p-1}|x|^{\frac{2-2\beta}{p-1}}}
    {1+|x|^{\frac{p-\beta}{p-1}}}
    \right\}, \nonumber\\
    \sum^N_{j=1}\frac{\partial U}{\partial x_j}\frac{\partial^2 U}{\partial x_i\partial x_j}
    = & \frac{c^2_{N,p}}
    {(1+|x|^{\frac{p-\beta}{p-1}})^{\frac{2(N-\beta)}{p-\beta}}}
    \left\{\frac{1-\beta}{p-1}|x|^{\frac{2(2-p-\beta)}{p-1}}
    -\frac{\frac{N-\beta}{p-1}|x|^{\frac{4-p-3\beta}{p-1}}}
    {1+|x|^{\frac{p-\beta}{p-1}}}
    \right\}x_i,
    \end{align*}
    where $c_{N,p}:=C_{N,p,\beta}\frac{N-p}{p-1}$. Here,
    \begin{align*}
    (\nabla U\nabla (\nabla v)\cdot\nabla U)
    := & \sum^N_{i,j}\frac{\partial U}{\partial x_j}\frac{\partial^2 v}{\partial x_i\partial x_j}\frac{\partial U}{\partial x_i}
    = \frac{c^2_{N,p}|x|^{\frac{2(2-p-\beta)}{p-1}}}
    {(1+|x|^{\frac{p-\beta}{p-1}})^{\frac{2(N-\beta)}{p-\beta}}}
    \sum^N_{i,j}\frac{\partial^2 v}{\partial x_i\partial x_j}x_ix_j,  \\
    (\nabla U\nabla (\nabla U)\cdot\nabla v)
    := & \sum^N_{i,j}\frac{\partial U}{\partial x_j}\frac{\partial^2 U}{\partial x_i\partial x_j}\frac{\partial v}{\partial x_i} \\
    = & \frac{c^2_{N,p}(x\cdot\nabla v)}
    {(1+|x|^{\frac{p-\beta}{p-1}})^{\frac{2(N-\beta)}{p-\beta}}}
    \left\{\frac{1-\beta}{p-1}|x|^{\frac{2(2-p-\beta)}{p-1}}
    -\frac{\frac{N-\beta}{p-1}|x|^{\frac{4-p-3\beta}{p-1}}}
    {1+|x|^{\frac{p-\beta}{p-1}}}
    \right\}, \\
    (\nabla U\nabla(\nabla U) \cdot\nabla U)
    := & \sum^N_{i,j}\frac{\partial U}{\partial x_j}\frac{\partial^2 U}{\partial x_i\partial x_j}\frac{\partial U}{\partial x_i} \\
    = & \frac{-c^3_{N,p}|x|^{\frac{p-\beta}{p-1}}}
    {(1+|x|^{\frac{p-\beta}{p-1}})^{\frac{3(N-\beta)}{p-\beta}}}
    \left\{\frac{1-\beta}{p-1}|x|^{\frac{2(2-p-\beta)}{p-1}}
    -\frac{\frac{N-\beta}{p-1}|x|^{\frac{4-p-3\beta}{p-1}}}
    {1+|x|^{\frac{p-\beta}{p-1}}}
    \right\}.
    \end{align*}
    Then by using the standard spherical decomposition and making the change of variable $r\mapsto r^{\frac{p}{p-\beta}}$, we can characterize all solutions to the linearized problem (\ref{rwlp}).

    \subsection{Proof of Theorem \ref{coroPpwhlpa}.}

    We make the standard partial wave decomposition of (\ref{rwlp}), namely
    \begin{equation}\label{Ppwhl2defvdp}
    v=v(r,\theta)=\sum^{\infty}_{k=0}\varphi_k(r)\Psi_k(\theta),
    \end{equation}
    where $r=|x|$, $\theta=\frac{x}{|x|}\in \mathbb{S}^{N-1}$, and
    \begin{equation*}
    \varphi_k(r)=\int_{\mathbb{S}^{N-1}}v(r,\theta)\Psi_k(\theta)
    \mathrm{d}\theta.
    \end{equation*}
    Here $\Psi_k(\theta)$ denotes the $k$-th spherical harmonic, i.e., it satisfies
    \begin{equation}\label{deflk}
    -\Delta_{\mathbb{S}^{N-1}}\Psi_k=\lambda_k \Psi_k,
    \end{equation}
    where $\Delta_{\mathbb{S}^{N-1}}$ is the Laplace-Beltrami operator on $\mathbb{S}^{N-1}$ with the standard metric and  $\lambda_k$ is the $k$-th eigenvalue of $-\Delta_{\mathbb{S}^{N-1}}$. It is well known that \begin{equation}\label{deflklk}
    \lambda_k=k(N-2+k),\quad k=0,1,2,\ldots,
    \end{equation}
    whose multiplicity is $\frac{(N+2k-2)(N+k-3)!}{(N-2)!k!}$ and that \[
    \mathrm{Ker}(\Delta_{\mathbb{S}^{N-1}}+\lambda_k)
    =\mathbb{Y}_k(\mathbb{R}^N)|_{\mathbb{S}^{N-1}},
    \]
    where $\mathbb{Y}_k(\mathbb{R}^N)$ is the space of all homogeneous harmonic polynomials of degree $k$ in $\mathbb{R}^N$. It is standard that $\lambda_0=0$ and the corresponding eigenfunction of (\ref{deflk}) is the constant function. The second eigenvalue $\lambda_1=N-1$ and the corresponding eigenfunctions of (\ref{deflk}) are $x_i/|x|$, $i=1,\ldots,N$.

    The following results can be obtained by direct calculation,
    \begin{align}\label{Ppwhl2deflklwp}
    \Delta (\varphi_k(r)\Psi_k(\theta))
    = & \Psi_k\left(\varphi''_k+\frac{N-1}{r}\varphi'_k\right)+\frac{\varphi_k}{r^2}\Delta_{\mathbb{S}^{N-1}}\Psi_k \nonumber\\
    = & \Psi_k\left(\varphi''_k+\frac{N-1}{r}\varphi'_k-\frac{\lambda_k}{r^2}\varphi_k\right).
    \end{align}
    It is easy to verify that
    \begin{equation*}
    \frac{\partial (\varphi_k(r)\Psi_k(\theta))}{\partial x_i}=\varphi'_k\frac{x_i}{r}\Psi_k+\varphi_k\frac{\partial\Psi_k}{\partial \theta_l}\frac{\partial\theta_l}{\partial x_i},
    \end{equation*}
    hence
    \begin{equation}\label{Ppwhl2deflklnp}
    \begin{split}
    x\cdot\nabla (\varphi_k(r)\Psi_k(\theta))=\sum^{N}_{i=1}x_i\frac{\partial (\varphi_k(r)\Psi_k(\theta))}{\partial x_i}=\varphi'_kr\Psi_k+\varphi_k\frac{\partial\Psi_k}{\partial \theta_l}\sum^{N}_{i=i}\frac{\partial\theta_l}{\partial x_i}x_i=\varphi'_kr\Psi_k,
    \end{split}
    \end{equation}
    and
    \begin{align}\label{npp2}
    \sum^N_{i,j=1}\frac{\partial^2 (\varphi_k(r)\Psi_k(\theta))}{\partial x_i\partial x_j}x_ix_j
    = & 2 \varphi'_kr\frac{\partial\Psi_k}{\partial \theta_l}\sum^N_{i=1}\frac{\partial\theta_l}{\partial x_i}x_i
    + \varphi_k\frac{\partial^2\Psi_k}{\partial \theta_l\partial \theta_m}\sum^N_{i,j=1}\frac{\partial\theta_l}{\partial x_i}x_i\frac{\partial\theta_m}{\partial x_j}x_j\nonumber \\
    & + \frac{\partial\Psi_k}{\partial \theta_l}\varphi_k\sum^N_{i,j=1}\frac{\partial^2\theta_l}{\partial x_i\partial x_j}x_ix_j
    +\varphi''_kr^2\Psi_k
    = \varphi''_kr^2\Psi_k,
    \end{align}
    due to
    \begin{equation*}
    \begin{split}
    \sum^N_{i=1}\frac{\partial\theta_l}{\partial x_i}x_i=0\quad \mbox{and}\quad \sum^N_{i,j=1}\frac{\partial^2\theta_l}{\partial x_i\partial x_j}x_ix_j=0,\quad l=1,\ldots,N-1.
    \end{split}
    \end{equation*}
    Then putting together (\ref{Ppwhl2defvdp}), (\ref{Ppwhl2deflklwp}), (\ref{Ppwhl2deflklnp}) and (\ref{npp2}) into (\ref{rwlp}), the function $v$ is a solution of (\ref{rwlp}) if and only if $\varphi_k\in\mathcal{W}$ is a classical solution of the system
    \begin{eqnarray}\label{Ppwhl2p2tpy}
    \left\{ \arraycolsep=1.5pt
       \begin{array}{ll}
        (p-1)\varphi''_k+\frac{\varphi'_k}{r}\left[(N-1)
        +\frac{(p-2)(N-\beta)}{1+r^{\frac{p-\beta}{p-1}}}\right]
    -\frac{\lambda_k}{r^2}\varphi_k \\[4mm]
    + (p^*_{\beta}-1)C_{N,p,\beta}^{p^*_{\beta}-p}
    \left(\frac{N-p}{p-1}\right)^{2-p}
    \frac{r^{\frac{p-\beta}{p-1}-2}}
    {\left(1+r^{\frac{p-\beta}{p-1}}\right)^2}\varphi_k=0 \quad \mbox{in}\quad r\in(0,\infty),\\[4mm]
        \varphi'_k(0)=0 \quad\mbox{if}\quad k=0,\quad \mbox{and}\quad \varphi_k(0)=0 \quad\mbox{if}\quad k\geq 1,
        \end{array}
    \right.
    \end{eqnarray}
    where $\mathcal{W}:=\{\omega\in C([0,\infty))| \int^\infty_0 r^{-\beta} U^{p^*_{\beta}-2}|\omega(r)|^2 r^{N-1} \mathrm{d}r<\infty\}$.
    We use the change of variable: $r=s^q$ with $q=p/(p-\beta)$,  and let
    \begin{equation*}
    \eta_k(s)=\varphi_k(r),
    \end{equation*}
    that transforms (\ref{Ppwhl2p2tpy}) into the following equations for any $\eta_k\in \widetilde{\mathcal{W}}$, $k=0,1,2,\ldots,$
    \begin{equation}\label{Ppwhl2p2tp}
    \eta''_k+\frac{\eta'_k}{s}\left(\frac{K-1}{p-1}
    +\frac{(p-2)K}{(p-1)(1+s^{\frac{p}{p-1}})}\right)
    -\frac{q^2\lambda_k}{(p-1)s^2}\eta_k
    +\frac{K(Kp-K+p)}{(p-1)^2}
    \frac{s^{\frac{p}{p-1}-2}}{(1+s^{\frac{p}{p-1}})^2}\eta_k=0.
    \end{equation}
    where $\widetilde{\mathcal{W}}:=\{\omega\in C([0,\infty))| \int^\infty_0 W^{\frac{Kp}{K-p}-2}|\omega(s)|^2 s^{K-1} ds<\infty\}$, $W(s)=U(r)$ and
    \begin{equation*}
    K=\frac{p(N-\beta)}{p-\beta}>p.
    \end{equation*}
    Here we have used the fact  
    \begin{equation*}
    q^2(p^*_{\beta}-1)C_{N,p,\beta}^{p^*_{\beta}-p}
    \left(\frac{N-p}{p-1}\right)^{2-p}
    =\frac{K(Kp-K+p)}{p-1},
    \end{equation*}
    where $C_{N,p,\beta}$ is given in \eqref{defvlzb}.

    Fix $K$ let us now consider the following eigenvalue problem
    \begin{equation}\label{Ppwhl2p2tep}
    \eta''+\frac{\eta'}{s}\left(\frac{K-1}{p-1}+\frac{(p-2)K}{(p-1)(1+s^{\frac{p}{p-1}})}\right)-\frac{\mu}{(p-1)s^2}\eta
        +\frac{K(Kp-K+p)}{(p-1)^2}\frac{s^{\frac{p}{p-1}-2}}{(1+s^{\frac{p}{p-1}})^2}\eta=0.
    \end{equation}
    When $K$ is an integer we can study (\ref{Ppwhl2p2tep}) as the linearized operator of the equation
    \begin{equation*}
    -\mathrm{div}(|\nabla W|^{p-2}\nabla W)=K\left(\frac{K-p}{p-1}\right)^{p-1}W^{\frac{Kp}{K-p}-1} ,\quad W>0 \quad \mbox{in}\quad \mathbb{R}^N, \quad u\in \mathcal{D}^{1,p}_0(\mathbb{R}^K).
    \end{equation*}
    around the standard solution $W(x)=(1+|x|^{p/(p-1)})^{-(K-p)/p}$ (note that we always have $K>p$). In this case, as in \cite[Proposition 3.1]{FN19} (see also \cite{PV21}), we have that
    \begin{equation}\label{Ppwhl2ptevp}
    \mu_0=0; \quad \mu_1=K-1\quad \mbox{and}\quad \eta_0(s)=\frac{(p-1)-s^{\frac{p}{p-1}}}{(1+s^{\frac{p}{p-1}})^\frac{K}{p}}; \quad \eta_1(s)=\frac{s^{\frac{1}{p-1}}}{(1+s^{\frac{p}{p-1}})^\frac{K}{p}}.
    \end{equation}
    Moreover, even $K$ is not an integer we readily see that (\ref{Ppwhl2ptevp}) remains true. Therefore, we conclude that (\ref{Ppwhl2p2tp}) has nontrivial solutions if and only if
    \begin{equation*}
    q^2\lambda_k\in \{0,K-1\},
    \end{equation*}
    where $\lambda_k$ is given in \eqref{deflklk}. If $q^2\lambda_k=0$ then $k=0$. Moreover, if $q^2\lambda_k=K-1$, that is
    \begin{equation}\label{ppk}
    \left(\frac{p-\beta}{p}\right)^2
    \left[\frac{p(N-\beta)}{p-\beta}-1\right]=k(N-2+k)\quad \mbox{for some}\quad k\in\mathbb{N}.
    \end{equation}
    However, since $0<\beta<p$, it is easy to verify that
    \begin{equation*}
    0<\left(\frac{p-\beta}{p}\right)^2
    \left[\frac{p(N-\beta)}{p-\beta}-1\right]
    < N-1,
    \end{equation*}
    which leads to \eqref{ppk} cannot happen. Therefore, we deduce that (\ref{Ppwhl2p2tpy}) only admits one solution
    \begin{equation}\label{Ppwhl2pyfp}
    \varphi_0(r)=\frac{(p-1)-r^{\frac{p-\beta}{p-1}}}
    {(1+r^{\frac{p-\beta}{p-1}})^\frac{N-\beta}{p-\beta}}.
    \end{equation}
    That is, the space of solutions of (\ref{rwlp}) has dimension $1$ and is spanned by
    \begin{equation*}
    v_0(x)=\frac{(p-1)-|x|^{\frac{p-\beta}{p-1}}}
    {(1+|x|^{\frac{p-\beta}{p-1}})^\frac{N-\beta}{p-\beta}}.
    \end{equation*}
    Since $v_0\sim \frac{\partial U_\lambda}{\partial \lambda}|_{\lambda=1}=\frac{N-p}{p}U+x\cdot \nabla U$, the proof of Theorem \ref{coroPpwhlpa} is complete.
    \qed

    \begin{remark}\label{remndr}\rm
    Theorem \ref{coroPpwhlpa} indicates $U$ is non-degenerate, and this conclusion has its own interest, particularly in the fields blow-up analysis and asymptotic analysis by using finite-dimensional reduction, see the statements in \cite{PV21}.
    \end{remark}

    Based on Proposition \ref{propcet} and Theorem \ref{coroPpwhlpa}, we are going to show the stability of Hardy-Sobolev inequality \eqref{defhsi} and give the proof of Theorem \ref{thmprtp}.

\section{{\bfseries Spectral analysis}}\label{sectspana}

    Let us consider the following eigenvalue problem
    \begin{equation}\label{pevp}
    \begin{split}
    & \mathcal{L}_{U} [v]=\mu|x|^{-\beta} U^{p^*_{\beta}-2}v \quad \mbox{in}\quad \mathbb{R}^N,\quad v\in L^2_{\beta,*}(\mathbb{R}^N),
    \end{split}
    \end{equation}
    where
    \begin{equation*}
    \begin{split}
    & \mathcal{L}_{U} [v]:=-\mathrm{div}(|\nabla U|^{p-2}\nabla v)-(p-2)\mathrm{div}(|\nabla U|^{p-4}(\nabla U\cdot\nabla v)\nabla U).
    \end{split}
    \end{equation*}
    When $p=2$, the eigenvalue problem \eqref{pevp} was classified by Smets and Willem \cite{SmWi03}. Then combining Theorem \ref{coroPpwhlpa} with \cite[Proposition 3.1]{FN19}, thanks to Proposition \ref{propcet}, making the change $r\mapsto r^{\frac{p}{p-\beta}}$ we deduce directly the following conclusion.

    \begin{proposition}\label{propev}
    Suppose $1<p<N$ and $0<\beta<p$. Let $\mu_i$, $i=1,2,\ldots,$ denote the eigenvalues of (\ref{pevp}) in increasing order. Then $\mu_1=(p-1)$ is simple and the corresponding eigenfunction is $\zeta U$ with $\zeta\in\mathbb{R}\setminus\{0\}$, $\mu_2=p^*_{\beta}-1$ and the corresponding eigenfunction is $\zeta (\frac{N-p}{p}U+x\cdot \nabla U)$ with $\zeta\in\mathbb{R}\setminus\{0\}$. Furthermore, the Rayleigh quotient characterization of eigenvalues implies
    \begin{equation*}
    \mu_{3}=\inf\left\{
    \frac{\int_{\mathbb{R}^N}\mathcal{L}_{U} [v] v \mathrm{d}x}{\int_{\mathbb{R}^N}|x|^{-\beta} U^{p^*_\beta-2}v^2 \mathrm{d}x}:  v\perp \mathrm{Span}\left\{U, \ \frac{N-p}{p}U+x\cdot \nabla U\right\}\right\}>p^*_{\beta}-1.
    \end{equation*}
    \end{proposition}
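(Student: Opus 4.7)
The plan is to reduce the eigenvalue problem \eqref{pevp}, through spherical harmonic decomposition followed by the change of radial variable $s = r^{(p-\beta)/p}$, to the linearized $p$-Sobolev eigenvalue problem on the auxiliary Euclidean space $\mathbb{R}^K$ with $K = p(N-\beta)/(p-\beta)$, whose spectrum was classified in \cite[Proposition 3.1]{FN19}.

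The first step is setting up the spectrum. By Proposition \ref{propcet} the embedding $\mathcal{D}^{1,2}_{0,*}(\mathbb{R}^N) \hookrightarrow L^2_{\beta,*}(\mathbb{R}^N)$ is compact, and the Dirichlet form associated with $\mathcal{L}_U$ is continuous and coercive on $\mathcal{D}^{1,2}_{0,*}(\mathbb{R}^N)$ (pointwise $p$-ellipticity gives $a(v,v)\geq (p-1)\|v\|^2_{\mathcal{D}^{1,2}_{0,*}}$). The standard spectral theorem for self-adjoint operators with compact resolvent thus produces a non-decreasing sequence $0 < \mu_1 \leq \mu_2 \leq \cdots \to \infty$ of eigenvalues of finite multiplicity, together with the Courant--Fischer representation used in defining $\mu_3$. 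Next I would verify directly that $U$ and $Z_0 := \tfrac{N-p}{p}U + x \cdot \nabla U$ are eigenfunctions: the $(p-1)$-homogeneity of $\mathcal{L}_U$ combined with the Euler--Lagrange equation for $U$ gives $\mathcal{L}_U[U] = (p-1)|x|^{-\beta}U^{p^*_\beta - 1}$, and differentiating at $\lambda = 1$ the equation satisfied by $U_\lambda$ gives $\mathcal{L}_U[Z_0] = (p^*_\beta - 1)|x|^{-\beta}U^{p^*_\beta - 2}Z_0$. Since $U > 0$ on $\mathbb{R}^N$, a Perron--Frobenius argument forces $\mu_1 = p - 1$ to be simple with eigenspace $\mathbb{R}U$.

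The third step is the classification of the remaining spectrum. I would decompose $v = \sum_{k \geq 0} \varphi_k(r) \Psi_k(\theta)$ and apply $s = r^{(p-\beta)/p}$ exactly as in the proof of Theorem \ref{coroPpwhlpa}, now leaving $\mu$ as a free spectral parameter in place of the specific value $p^*_\beta - 1$. The resulting ODE for $\eta_k(s) := \varphi_k(r)$ is the radial projection, onto the angular sector of angular eigenvalue $q^2\lambda_k$ (with $q = p/(p-\beta)$), of the linearized $p$-Sobolev eigenvalue problem on $\mathbb{R}^K$, and $\mu$ enters linearly as a multiplicative factor on the zeroth-order term. By \cite[Proposition 3.1]{FN19}, the $k = 0$ sector (angular eigenvalue $0$) contributes exactly the two simple eigenvalues $p - 1$ and $p^*_K - 1 = p^*_\beta - 1$ before a strict gap, with eigenfunctions $U$ and $Z_0$. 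For $k \geq 1$ one has the strict inequality $q^2 \lambda_k > K - 1$---precisely the bound established in the proof of Theorem \ref{coroPpwhlpa} using $0 < \beta < p$---and the first eigenvalue of the reduced ODE is strictly monotone increasing in the angular parameter, an immediate consequence of the Rayleigh quotient since increasing the angular parameter strictly increases the numerator on any nontrivial minimizer. Hence every eigenvalue produced by a $k \geq 1$ sector lies strictly above $p^*_\beta - 1$. Combining these facts yields $\mu_2 = p^*_\beta - 1$ simple with eigenspace $\mathbb{R}Z_0$ and $\mu_3 > p^*_\beta - 1$, with the Rayleigh characterization following from the min-max principle on the orthogonal complement of $\mathrm{Span}\{U, Z_0\}$. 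The most delicate point is the identification of the reduced ODE at general $\mu$ with the standard linearized $p$-Sobolev radial problem on $\mathbb{R}^K$: this is essentially the computation already performed for $\mu = p^*_\beta - 1$ in the proof of Theorem \ref{coroPpwhlpa}, provided one keeps track of how $\mu$ enters so as to recognize the equation as the spectral equation at angular eigenvalue $q^2\lambda_k$ rather than only the kernel equation.
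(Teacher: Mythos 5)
Your proposal is correct and follows essentially the same route the paper indicates for this proposition: the compact embedding of Proposition \ref{propcet} to get a discrete spectrum, the spherical-harmonic decomposition and the change of variable $s=r^{(p-\beta)/p}$ to reduce each sector to the $K$-dimensional linearized problem of \cite[Proposition 3.1]{FN19} with $K=\frac{p(N-\beta)}{p-\beta}$ (so $p^*_K=p^*_\beta$), and the bound $q^2\lambda_k>K-1$ for $k\geq 1$ from the proof of Theorem \ref{coroPpwhlpa} combined with monotonicity of the sector ground state in the angular parameter. The only quibble is immaterial: the coercivity constant of the quadratic form on $\mathcal{D}^{1,2}_{0,*}(\mathbb{R}^N)$ is $\min\{1,p-1\}$ rather than $p-1$ when $p>2$, since $(\nabla U\cdot\nabla v)^2$ may vanish pointwise.
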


    In particular, Proposition \ref{propev} implies
    \begin{align}\label{czkj}
    T_{U}\mathcal{M}_\beta=\mathrm{Span}\left\{U,\ \frac{N-p}{p}U+x\cdot \nabla U\right\},
    \end{align}
    where $\mathcal{M}_\beta:=\{cU_{\lambda}: c\in\mathbb{R}, \lambda>0\}$ is set of extremal functions for Hardy-Sobolev inequality \eqref{defhsi}.
    From Proposition \ref{propev}, we directly obtain
    \begin{proposition}\label{propevl}
    Suppose $1<p<N$ and $0<\beta<p$. Then there exists a constant $\tau=\tau(N,p,\beta)>0$ such that for any function $v\in L^2_{\beta,*}(\mathbb{R}^N)$ orthogonal to $T_{U} \mathcal{M}_\beta$, it holds that
    \begin{equation*}
    \begin{split}
    & \int_{\mathbb{R}^N}\left[|\nabla U|^{p-2}|\nabla v|^2+(p-2)|\nabla U|^{p-4}(\nabla U\cdot\nabla v)^2\right]\mathrm{d}x
    \geq \left[(p^*_{\beta}-1)+2\tau\right]
    \int_{\mathbb{R}^N}|x|^{-\beta}U^{p^*_\beta-2}v^2\mathrm{d}x.
    \end{split}
    \end{equation*}
    \end{proposition}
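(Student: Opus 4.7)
\medskip

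\noindent\textbf{Proof proposal for Proposition \ref{propevl}.}
The plan is to deduce this gap estimate directly from the spectral decomposition of $\mathcal{L}_U$ on $L^2_{\beta,*}(\mathbb{R}^N)$, which is already available thanks to the compact embedding in Proposition \ref{propcet} and the classification of the first two eigenvalues in Proposition \ref{propev}. The ``orthogonality to $T_U\mathcal{M}_\beta$'' will be interpreted in the natural $L^2_{\beta,*}$ inner product
\[
\langle v,w\rangle_{\beta,*}:=\int_{\mathbb{R}^N}|x|^{-\beta}U^{p^*_\beta-2}\,v\,w\,\mathrm{d}x,
\]
which is the weight with respect to which \eqref{pevp} is formally self-adjoint.

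First I would record that, by the integration-by-parts identity obtained by approximating $v$ with functions in $C^1_{c,0}(\mathbb{R}^N)$, the left-hand side of the claimed inequality equals the quadratic form
\[
Q_U(v):=\int_{\mathbb{R}^N}|\nabla U|^{p-2}|\nabla v|^2\mathrm{d}x+(p-2)\int_{\mathbb{R}^N}|\nabla U|^{p-4}(\nabla U\cdot\nabla v)^2\mathrm{d}x,
\]
which is precisely $\int_{\mathbb{R}^N}\mathcal{L}_U[v]\,v\,\mathrm{d}x$. Combined with Lemma \ref{lemcztj} (in particular \eqref{continouseb}), this shows that $Q_U$ is comparable to the norm of $\mathcal{D}^{1,2}_{0,*}(\mathbb{R}^N)$, so that Proposition \ref{propcet} yields that the operator associated to \eqref{pevp} has a compact resolvent and hence a discrete nondecreasing sequence of eigenvalues $\mu_i$ accumulating at infinity, with an orthonormal basis $\{e_i\}$ of eigenfunctions in $L^2_{\beta,*}(\mathbb{R}^N)$.

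Next I would invoke Proposition \ref{propev} to identify $\mu_1=p-1$ with eigenspace spanned by $U$, $\mu_2=p^*_\beta-1$ with eigenspace spanned by $\tfrac{N-p}{p}U+x\cdot\nabla U$, and $\mu_3>p^*_\beta-1$. By \eqref{czkj}, these first two eigenfunctions span exactly $T_U\mathcal{M}_\beta$. Consequently, every $v\in L^2_{\beta,*}(\mathbb{R}^N)$ with $v\perp T_U\mathcal{M}_\beta$ admits an expansion $v=\sum_{i\geq 3}c_i e_i$, and Parseval together with the variational characterization of eigenvalues give
\[
Q_U(v)=\sum_{i\geq 3}\mu_i c_i^2\geq \mu_3\sum_{i\geq 3}c_i^2=\mu_3\,\|v\|_{L^2_{\beta,*}(\mathbb{R}^N)}^2.
\]
Setting $\tau:=\tfrac12(\mu_3-(p^*_\beta-1))>0$, which depends only on $N$, $p$, $\beta$, then yields the desired inequality.

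The only nonroutine point is ensuring that the orthogonality is taken with respect to the right inner product and that the spectral decomposition of $v$ converges both in $L^2_{\beta,*}$ and in the form domain (so that $Q_U(v)=\sum\mu_i c_i^2$ is legitimate); both follow from the compactness in Proposition \ref{propcet} applied to the self-adjoint form $Q_U$ on $\mathcal{D}^{1,2}_{0,*}(\mathbb{R}^N)$, by a standard Hilbert-space argument. Once this is in hand, no further computation is required.
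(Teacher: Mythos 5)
Your argument is correct and is essentially the paper's own proof: the paper deduces Proposition \ref{propevl} directly from the Rayleigh-quotient characterization of $\mu_3$ in Proposition \ref{propev} (made rigorous by the compactness of Proposition \ref{propcet}), identifying the first two eigenspaces with $T_U\mathcal{M}_\beta$ via \eqref{czkj} and taking $2\tau=\mu_3-(p^*_\beta-1)$, exactly as you do. Your added remark that for $v\in L^2_{\beta,*}$ outside the form domain the left-hand side is $+\infty$ (so the inequality is trivial there) is the right way to read the statement.
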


    Enlightened by \cite{FZ22}, we give the following remark which will  be important to give a meaning to the notion of  ``orthogonal to $T_{U} \mathcal{M}_\beta$" for functions which are not necessarily in $L^2_{\beta,*}(\mathbb{R}^N)$.

    \begin{remark}\rm
    For any $\xi\in T_{U} \mathcal{M}_\beta$ it holds $U^{p^*_{\beta}-2}\xi \in L_\beta^{\frac{p^*_{\beta}}{p^*_{\beta}-1}}(\mathbb{R}^N)
    =\left(L_\beta^{p^*_{\beta}}(\mathbb{R}^N)\right)'$, here $L_\beta^{q}(\mathbb{R}^N)$ is the set of measurable functions with the norm $\|\varphi\|_{L_\beta^{q}(\mathbb{R}^N)}:
    =\left(\int_{\mathbb{R}^N}|x|^{-\beta}|\varphi|^{q} \mathrm{d}x\right)^{\frac{1}{q}}$.
    Hence, by abuse of notation, for any function $v\in L_\beta^{p^*_{\beta}}(\mathbb{R}^N)$ we say that $v$ is orthogonal to $T_{U} \mathcal{M}_\beta$ in $L^2_{\beta,*}(\mathbb{R}^N)$ if \[
    \int_{\mathbb{R}^N}|x|^{-\beta}U^{p^*_{\beta}-2} v\xi\mathrm{d}x=0,\quad \forall \xi \in T_{U} \mathcal{M}_\beta.
    \]
    \end{remark}

    Note that, by H\"{o}lder inequality, $L_\beta^{p^*_{\beta}}(\mathbb{R}^N)\subset L^2_{\beta,*}(\mathbb{R}^N)$ if $p^*_{\beta}\geq 2$.
    Hence, the notion of orthogonality introduced above is particularly relevant when $p^*_{\beta}<2$ (equivalently, $p<\frac{2N}{N+2-\beta}$). We also observe that, by Sobolev embedding theorem, the previous remark gives a meaning to the orthogonality to $T_{U} \mathcal{M}_\beta$ for functions in $\mathcal{D}^{1,p}_{0}(\mathbb{R}^N)$ since the Hardy-Sobolev inequality \eqref{defhsi} implies  $\mathcal{D}^{1,p}_{0}(\mathbb{R}^N)\hookrightarrow L_\beta^{p^*_{\beta}}(\mathbb{R}^N)$ continuously.

\subsection{{\bfseries Improvements of spectral gap}}\label{subsectsge}

    Same as \cite[Proposition 3.8]{FZ22}, the goal of this subsection is improving the spectral gap obtained in  Proposition \ref{propevl}, that is, we will give the following spectral gap-type estimates for $p\geq 2$ and $1<p<2$, respectively.

    \begin{lemma}\label{lemsgap}
    Let $2\leq p<N$ and $0<\beta<p$. Given any $\gamma_0>0$, there exists $\overline{\delta}=\overline{\delta}(N,p,\beta,\gamma_0)>0$ such that for any function $v\in \mathcal{D}^{1,p}_{0}(\mathbb{R}^N)$ orthogonal to $T_{U} \mathcal{M}_\beta$ in $L^2_{\beta,*}(\mathbb{R}^N)$ satisfying $\|v\|\leq \overline{\delta}$, we have
    \begin{equation*}
    \begin{split}
    & \int_{\mathbb{R}^N}\left[|\nabla U|^{p-2}|\nabla v|^2+(p-2)|\omega|^{p-2}(|\nabla (U+v)|-|\nabla U|)^2\right]\mathrm{d}x \\
    \geq & \left[(p^*_{\beta}-1)+\tau\right]
    \int_{\mathbb{R}^N}|x|^{-\beta}U^{p^*_{\beta}-2}|v|^2\mathrm{d}x,
    \end{split}
    \end{equation*}
    where $\tau>0$ is given in Proposition \ref{propevl}, and $\omega: \mathbb{R}^{2N}\to \mathbb{R}^N$ is defined in analogy to Lemma \ref{lemui1p}:
    \begin{eqnarray*}
    \omega=\omega(\nabla U,\nabla (U+v))=
    \left\{ \arraycolsep=1.5pt
       \begin{array}{ll}
        \nabla U,\ \ &{\rm if}\ \ |\nabla U|<|\nabla (U+v)|,\\[3mm]
        \left(\frac{|\nabla (U+v)|}{|\nabla U|}\right)^{\frac{1}{p-2}}\nabla (U+v),\ \ &{\rm if}\ \  |\nabla (U+v)|\leq |\nabla U|.
        \end{array}
    \right.
    \end{eqnarray*}
    \end{lemma}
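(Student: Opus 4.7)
The plan is a contradiction-and-compactness argument built on the spectral gap of Proposition \ref{propevl} and the compact embedding of Proposition \ref{propcet}. Suppose the conclusion fails, so there exist $\bar\delta_n\downarrow 0$ and $v_n\in\mathcal{D}^{1,p}_0(\mathbb{R}^N)$ with $a_n:=\|v_n\|_{\mathcal{D}^{1,p}_0(\mathbb{R}^N)}\leq \bar\delta_n$, each $v_n$ orthogonal to $T_U\mathcal{M}_\beta$ in $L^2_{\beta,*}(\mathbb{R}^N)$, violating the displayed inequality. Set $t_n:=\|v_n\|_{L^2_{\beta,*}(\mathbb{R}^N)}$ (positive without loss of generality) and $w_n:=v_n/t_n$, so $\|w_n\|_{L^2_{\beta,*}}=1$. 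Since $p\geq 2$ gives $p^*_\beta\geq 2$, Hölder's inequality together with the Hardy-Sobolev inequality \eqref{defhsi} produces $t_n\leq Ca_n\to 0$. Both terms on the LHS being nonnegative for $p\geq 2$, dividing the violated inequality by $t_n^2$ yields
\begin{equation*}
\int_{\mathbb{R}^N}|\nabla U|^{p-2}|\nabla w_n|^2\,\mathrm{d}x\leq (p^*_\beta-1)+\tau,
\end{equation*}
so $w_n$ is bounded in $\mathcal{D}^{1,2}_{0,*}(\mathbb{R}^N)$.

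By Proposition \ref{propcet} we may extract a subsequence with $w_n\rightharpoonup w$ in $\mathcal{D}^{1,2}_{0,*}(\mathbb{R}^N)$ and $w_n\to w$ strongly in $L^2_{\beta,*}(\mathbb{R}^N)$. Strong convergence forces $\|w\|_{L^2_{\beta,*}}=1$ and transports the orthogonality relations $\int|x|^{-\beta}U^{p^*_\beta-2}w_n\xi\,\mathrm{d}x=0$ for $\xi\in T_U\mathcal{M}_\beta$ to the limit, hence $w\perp T_U\mathcal{M}_\beta$ in $L^2_{\beta,*}$. Since $|\nabla U|^{p-2}$ is locally bounded above and below by positive constants on each compact subset of $\mathbb{R}^N\setminus\{0\}$, the local version of Rellich-Kondrachov together with a diagonal argument furnishes a further subsequence along which $\nabla w_n\to \nabla w$ a.e.\ on $\mathbb{R}^N\setminus\{0\}$.

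Using $t_n\to 0$ and the identity
\begin{equation*}
|\nabla(U+t_nw_n)|-|\nabla U|=\frac{2t_n\nabla U\cdot\nabla w_n+t_n^2|\nabla w_n|^2}{|\nabla(U+t_nw_n)|+|\nabla U|},
\end{equation*}
we obtain the a.e.\ pointwise limit
\begin{equation*}
|\omega_n|^{p-2}\,\frac{(|\nabla(U+t_nw_n)|-|\nabla U|)^2}{t_n^2}\longrightarrow |\nabla U|^{p-4}(\nabla U\cdot\nabla w)^2,
\end{equation*}
since $\omega_n\to\nabla U$ a.e. Fatou's lemma applied to this nonnegative integrand and weak lower semicontinuity of the convex functional $\int|\nabla U|^{p-2}|\nabla(\cdot)|^2\,\mathrm{d}x$ give
\begin{equation*}
\liminf_{n\to\infty}\frac{L_n}{t_n^2}\geq \int_{\mathbb{R}^N}\bigl[|\nabla U|^{p-2}|\nabla w|^2+(p-2)|\nabla U|^{p-4}(\nabla U\cdot\nabla w)^2\bigr]\,\mathrm{d}x,
\end{equation*}
where $L_n$ denotes the LHS of the inequality in Lemma \ref{lemsgap} evaluated at $v_n$. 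Proposition \ref{propevl} bounds this last integral below by $[(p^*_\beta-1)+2\tau]\|w\|_{L^2_{\beta,*}}^2=(p^*_\beta-1)+2\tau$, contradicting the assumption $L_n/t_n^2<(p^*_\beta-1)+\tau$.

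\textbf{Main obstacle.} The delicate step is the passage to the limit in the second, genuinely nonlinear term of $L_n/t_n^2$, which requires the a.e.\ gradient convergence $\nabla w_n\to\nabla w$ away from the origin as well as care near the origin (where $|\nabla U|$ may vanish when $\beta<1$ or blow up when $\beta>1$). The gradient convergence is not automatic from weak convergence in $\mathcal{D}^{1,2}_{0,*}$ alone; it is recovered from the fact that, on compact subsets of $\mathbb{R}^N\setminus\{0\}$, the weighted bound $\int|\nabla U|^{p-2}|\nabla w_n|^2\,\mathrm{d}x\leq C$ yields an $H^1_{\mathrm{loc}}$ bound, and local Rellich-Kondrachov applied with a diagonal exhaustion of $\mathbb{R}^N\setminus\{0\}$ by annuli gives a.e.\ convergence. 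Finally, Fatou handles the neighborhood of the origin uniformly thanks to the nonnegativity of the integrand (no integrability near $0$ needs to be established a priori for the limit).
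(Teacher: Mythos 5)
Your overall architecture (contradiction, normalization, compact embedding from Proposition \ref{propcet}, passage of orthogonality to the limit, contradiction with Proposition \ref{propevl}) matches the paper's, and the normalization by $t_n=\|v_n\|_{L^2_{\beta,*}}$ instead of the weighted gradient norm is a harmless variant. The genuine gap is the step you yourself flag as the ``main obstacle'': you claim that $\nabla w_n\to\nabla w$ a.e.\ on $\mathbb{R}^N\setminus\{0\}$ because the bound $\int|\nabla U|^{p-2}|\nabla w_n|^2\le C$ gives an $H^1_{\mathrm{loc}}$ bound and ``local Rellich--Kondrachov'' applies. Rellich--Kondrachov gives strong $L^2_{\mathrm{loc}}$ (hence a.e., along a subsequence) convergence of the \emph{functions} $w_n$, never of their gradients; a bounded sequence in $H^1_{\mathrm{loc}}$ has gradients converging only weakly, and a.e.\ convergence of gradients can genuinely fail (think of oscillating perturbations $w+n^{-1}\sin(nx_1)\varphi$, whose gradients do not converge pointwise). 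Since the $w_n$ here come from a contradiction hypothesis and satisfy no equation, there is no hidden mechanism (monotonicity, div--curl, etc.) to recover pointwise gradient convergence. Consequently your claimed a.e.\ limit
\begin{equation*}
|\omega_n|^{p-2}\,\frac{(|\nabla(U+t_nw_n)|-|\nabla U|)^2}{t_n^2}\longrightarrow |\nabla U|^{p-4}(\nabla U\cdot\nabla w)^2
\end{equation*}
is unjustified, and the Fatou step that carries the entire nonlinear term collapses.

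This is exactly the difficulty the paper's proof is engineered to avoid: there one writes $\widehat v_n=\widehat v+\varphi_n$ with $\nabla\varphi_n\rightharpoonup 0$, represents the increment as $\frac{|\nabla(U+v_n)|-|\nabla U|}{\varepsilon_n}=\bigl[\int_0^1\frac{\nabla U+t\nabla v_n}{|\nabla U+t\nabla v_n|}\,\mathrm{d}t\bigr]\cdot\nabla\widehat v_n$, expands the squares, discards the nonnegative quadratic remainder in $\varphi_n$, and passes to the limit only in the cross terms, using that the direction field converges a.e.\ (because $\nabla v_n\to0$ in $L^p$, hence a.e.), that $|\omega_n|^{p-2}\le C|\nabla U|^{p-2}$, and that $\nabla\varphi_n$ (restricted to the good sets $\mathcal{R}_{n,R}$, with $|\mathcal{S}_{n,R}|\to0$) converges weakly in local $L^2$. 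No pointwise convergence of $\nabla\widehat v_n$ is ever needed. Your argument could be repaired either by adopting this decomposition, or by replacing Fatou with a lower-semicontinuity lemma for quadratic forms $\int g_n(\xi_n\cdot\nabla w_n)^2$ with nonnegative, dominated, a.e.-convergent coefficients $g_n$, $\xi_n$ and weakly convergent $\nabla w_n$ (Egorov plus convexity); but as written, the proof does not go through.
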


    \begin{proof}
    We argue by contradiction. If the statement of this lemma fails, then there exists a sequence $0\not\equiv v_n\to 0$ in $\mathcal{D}^{1,p}_{0}(\mathbb{R}^N)$, with $v_n$ orthogonal to $T_{U} \mathcal{M}_\beta$, such that
    \begin{align}\label{evbc}
    & \int_{\mathbb{R}^N}\left[|\nabla U|^{p-2}|\nabla v_n|^2+(p-2)|\omega_n|^{p-2}(|\nabla (U+v_n)|-|\nabla U|)^2\right]\mathrm{d}x \nonumber\\
    < & \left[(p^*_{\beta}-1)+\tau\right]
    \int_{\mathbb{R}^N}|x|^{-\beta}U^{p^*_{\beta}-2}|v_n|^2\mathrm{d}x,
    \end{align}
    where $\omega_n$ corresponds to $v_n$ as in the statement.
    Let
    \[
    \varepsilon_n:=\|v_n\|_{\mathcal{D}^{1,2}_{0,*}(\mathbb{R}^N)}
    =\left(\int_{\mathbb{R}^N}|\nabla U|^{p-2} |\nabla v_n|^2  \mathrm{d}x\right)^{\frac{1}{2}},\quad \widehat{v}_n=\frac{v_n}{\varepsilon_n}.
    \]
    Note that, since $p\geq 2$, it follows by H\"{o}lder inequality that
    \[
    \int_{\mathbb{R}^N}|\nabla U|^{p-2}|\nabla v_n|^2\mathrm{d}x
    \leq \left(\int_{\mathbb{R}^N}|\nabla U|^{p}\mathrm{d}x\right)^{1-\frac{p}{2}}
    \left(\int_{\mathbb{R}^N}|\nabla v_n|^{p}\mathrm{d}x\right)^{\frac{p}{2}}
    \to 0,
    \]
    hence $\varepsilon_n\to 0$, as $n\to \infty$.
    Since $\|\widehat{v}_n\|_{\mathcal{D}^{1,2}_{0,*}(\mathbb{R}^N)}=1$, Proposition \ref{propcet} implies that, up to a subsequence, $\widehat{v}_n\rightharpoonup \widehat{v}$ in $\mathcal{D}^{1,2}_{0,*}(\mathbb{R}^N)$ and $\widehat{v}_n \rightarrow\widehat{v}$ in $L^2_{\beta,*}(\mathbb{R}^N)$ for some $\widehat{v}\in \mathcal{D}^{1,2}_{0,*}(\mathbb{R}^N)$. Also, since $p\geq 2$, it follows from \eqref{evbc} that
    \[
    1=\int_{\mathbb{R}^N}|\nabla U|^{p-2}|\nabla \widehat{v}_n|^2 \mathrm{d}x \leq  \left[(p^*_{\beta}-1)+\tau\right]
    \int_{\mathbb{R}^N}|x|^{-\beta}U^{p^*_{\beta}-2}
    |\widehat{v}_n|^2\mathrm{d}x,
    \]
    then we deduce that
    \[
    \|\widehat{v}_n\|_{L^2_{\beta,*}(\mathbb{R}^N)}
    =\int_{\mathbb{R}^N}|x|^{-\beta}U^{p^*_{\beta}-2}
    |\widehat{v}_n|^2\mathrm{d}x\geq c
    \]
    for some $c>0$.

    Fix $R>1$ which can be chosen arbitrarily large, set
    \begin{equation*}
    \begin{split}
    \mathcal{R}_n:=\{2|\nabla U|\geq |\nabla v_n|\}&,\quad \mathcal{S}_n:=\{2|\nabla U|< |\nabla v_n|\},  \\
    \mathcal{R}_{n,R}:=\left(B_R\backslash B_{\frac{1}{R}}\right)\cap \mathcal{R}_n&,\quad
    \mathcal{S}_{n,R}:=\left(B_R\backslash B_{\frac{1}{R}}\right)\cap \mathcal{S}_n,
    \end{split}
    \end{equation*}
    thus $B_R\backslash B_{\frac{1}{R}}=\mathcal{R}_{n,R} \cup \mathcal{S}_{n,R}$.
    Since the integrand in the left hand side of \eqref{evbc} is nonnegative, we have
    \begin{align}\label{evbcb}
    & \int_{B_R\backslash B_{\frac{1}{R}}}\left[|\nabla U|^{p-2}|\nabla \widehat{v}_n|^2+(p-2)|\omega_n|^{p-2}\left(\frac{|\nabla (U+v_n)|-|\nabla U|}{\varepsilon_n}\right)^2\right]\mathrm{d}x \nonumber\\
    < & \left[(p^*_{\beta}-1)+\tau\right]
    \int_{\mathbb{R}^N}|x|^{-\beta}U^{p^*_{\beta}-2}|\widehat{v}_n|^2
    \mathrm{d}x.
    \end{align}
    From Proposition \ref{propcet}, we have
    \begin{align*}
    c\leq \int_{\mathbb{R}^N}|x|^{-\beta}
    U^{p^*_{\beta}-2}|\widehat{v}_n|^2\mathrm{d}x
    \leq C_1 \int_{\mathbb{R}^N}|\nabla U|^{p-2} |\nabla \widehat{v}_n|^2  \mathrm{d}x=C_1,
    \end{align*}
    thus
    \begin{align*}
    & \int_{\mathcal{R}_{n,R}}\left| \nabla  U \right|^{p-2}|\nabla \widehat{v}_n|^2 \mathrm{d}x
    +\varepsilon^{p-2}_n\int_{\mathcal{S}_{n,R}}|\nabla \widehat{v}_n|^p \mathrm{d}x \leq C_2.
    \end{align*}
    Then we obtain
    \[
    \varepsilon_n^{-2}\int_{\mathcal{S}_{n,R}}|\nabla U|^{p}\mathrm{d}x
    \leq \frac{\varepsilon_n^{p-2}}{2^p}\int_{\mathcal{S}_{n,R}} |\nabla \widehat{v}_n|^{p}\mathrm{d}x
    \leq C_3,
    \]
    and since
    \[
    0<c(R)\leq |\nabla U|\leq C(R)\quad \mbox{inside}\quad B_R\backslash B_{\frac{1}{R}},\quad \forall R>1,
    \]
    for some constants $c(R)\leq C(R)$ depending only on $R$, we conclude that
    \begin{align}\label{pb2csirt0}
    |\mathcal{S}_{n,R}|\to 0\quad \mbox{as}\quad n\to \infty,\quad \forall R>1.
    \end{align}
    Here $|\mathcal{S}_{n,R}|$ denote the Lebesgue measure of $\mathcal{S}_{n,R}$. Now, writing
    \[
    \widehat{v}_n=\widehat{v}+\varphi_n,\quad\mbox{with}\quad \varphi_n:= \widehat{v}_n-\widehat{v},
    \]
    since $R>1$ is arbitrary, we have
    \[
    \varphi_n \rightharpoonup 0 \quad \mbox{locally in}\quad   \mathcal{D}^{1,2}_{0,*}(\mathbb{R}^N\backslash\{\mathbf{0}\}).\]
    Moreover, we have $|\omega_n|\to |\nabla U|$ a.e. in $\mathbb{R}^N$. Then, let us rewrite
    \begin{equation*}
    \begin{split}
    \left(\frac{|\nabla (U+v_n)|-|\nabla U|}{\varepsilon_n}\right)^2
    = & \left(\left[\int^1_0\frac{\nabla U+ t\nabla v_n}{|\nabla U+ t\nabla v_n|}\mathrm{d}t\right]\cdot \nabla \widehat{v}_n\right)^2 \\
    = & \left(\left[\int^1_0\frac{\nabla U+ t\nabla v_n}{|\nabla U+ t\nabla v_n|}\mathrm{d}t\right]\cdot \nabla (\widehat{v}+\varphi_n)\right)^2.
    \end{split}
    \end{equation*}
    Hence, if we set
    \[
    f_{n,1}=\left[\int^1_0\frac{\nabla U+ t\nabla v_n}{|\nabla U+ t\nabla v_n|}\mathrm{d}t\right]\cdot \nabla \widehat{v},\quad
    f_{n,2}=\left[\int^1_0\frac{\nabla U+ t\nabla v_n}{|\nabla U+ t\nabla v_n|}\mathrm{d}t\right]\cdot \nabla \varphi_n,
    \]
    since $\frac{\nabla U+ t\nabla v_n}{|\nabla U+ t\nabla v_n|}\to \frac{\nabla U}{|\nabla U|}$ a.e., it follows from Lebesgue's dominated convergence theorem that
    \[
    f_{n,1}\to \frac{\nabla U}{|\nabla U|}\cdot\nabla \widehat{v}\quad \mbox{locally in}\quad L^2(\mathbb{R}^N\backslash\{\mathbf{0}\}),\quad f_{n,2}\chi_{\mathcal{R}_n}\rightharpoonup 0\quad \mbox{locally in}\quad L^2(\mathbb{R}^N\backslash\{\mathbf{0}\}).
    \]
    Thus, the left hand side of \eqref{evbcb} from below as follows:
    \begin{align}\label{evbcbb}
    & \int_{\mathcal{R}_{n,R}}\left[|\nabla U|^{p-2}|\nabla \widehat{v}_n|^2+(p-2)|\omega_n|^{p-2}\left(\frac{|\nabla (U+v_n)|-|\nabla U|}{\varepsilon_n}\right)^2\right]\mathrm{d}x \nonumber\\
    = & \int_{\mathcal{R}_{n,R}}\left[|\nabla U|^{p-2}\left(|\nabla \widehat{v}|^2+2 \nabla \varphi_n\cdot \nabla \widehat{v}\right)+(p-2)|\omega_n|^{p-2}\left(f_{n,1}^2
    +2f_{n,1}f_{n,2}\right)\right]\mathrm{d}x \nonumber\\
    & + \int_{\mathcal{R}_{n,R}}\left[|\nabla U|^{p-2} |\nabla \varphi|^2+(p-2)|\omega_n|^{p-2}f_{n,2}^2\right]\mathrm{d}x \nonumber\\
    \geq & \int_{\mathcal{R}_{n,R}}\left[|\nabla U|^{p-2}\left(|\nabla \widehat{v}|^2+2 \nabla \varphi_n\cdot \nabla \widehat{v}\right)+(p-2)|\omega_n|^{p-2}
    \left(f_{n,1}^2+2f_{n,1}f_{n,2}\right)\right]\mathrm{d}x.
    \end{align}
    Then, combining the convergences
    \begin{equation*}
    \begin{split}
    & \nabla \varphi_n \chi_{\mathcal{R}_n}\rightharpoonup 0,
    \quad f_{n,1}\to \frac{\nabla U}{|\nabla U|}\cdot\nabla \widehat{v},
    \quad f_{n,2}\chi_{\mathcal{R}_n}\rightharpoonup 0,
    \quad \mbox{locally in}\quad L^2(\mathbb{R}^N\backslash\{\mathbf{0}\}),\\
    & |\omega_n|\to |\nabla U|\quad \mbox{a.e.}, \quad |(B_R\backslash B_{\frac{1}{R}})\backslash\mathcal{R}_{n,R}|=|\mathcal{S}_{n,R}|\to 0,
    \end{split}
    \end{equation*}
    with the fact that
    \[
    |\omega_n|^{p-2}\leq C(p)|\nabla U|^{p-2},
    \]
    by Lebesgue's dominated convergence theorem, we deduce that
    \begin{small}
    \begin{align*}
    & \liminf_{i\to \infty}\int_{\mathcal{R}_{n,R}}\left[|\nabla U|^{p-2}\left(|\nabla \widehat{v}|^2+2 \nabla \varphi_n\cdot \nabla \widehat{v}\right)+(p-2)|\omega_n|^{p-2}
    \left(f_{n,1}^2+2f_{n,1}f_{n,2}\right)\right]\mathrm{d}x
    \\
    \to & \int_{B_R\backslash B_{\frac{1}{R}}}\left[|\nabla U|^{p-2}|\nabla \widehat{v} |^2+(p-2)|\nabla U|^{p-2}\left(\frac{\nabla U\cdot\nabla \widehat{v} }{| \nabla U|}\right)^2\right]\mathrm{d}x,
    \end{align*}
    \end{small}
    then combining \eqref{evbcb} with \eqref{evbcbb} we have
    \begin{small}
    \begin{align}\label{evbcbbcf}
    & \liminf_{n\to \infty}\int_{B_R\backslash B_{\frac{1}{R}}}\left[|\nabla U|^{p-2}|\nabla \widehat{v}_n|^2+(p-2)|\omega_n|^{p-2}\left(\frac{|\nabla (U+v_n)|-|\nabla U|}{\varepsilon_n}\right)^2\right]\mathrm{d}x
    \nonumber\\
    \geq & \int_{B_R\backslash B_{\frac{1}{R}}}\left[|\nabla U|^{p-2}|\nabla \widehat{v} |^2+(p-2)|\nabla U|^{p-2}\left(\frac{\nabla U\cdot\nabla \widehat{v} }{| \nabla U|}\right)^2\right]\mathrm{d}x.
    \end{align}
    \end{small}
    Recalling \eqref{evbcb} and since $R>1$ is arbitrary, \eqref{evbcbbcf} proves that
    \begin{align*}
    & \int_{\mathbb{R}^N}\left[|\nabla U|^{p-2}|\nabla \widehat{v}|^2+(p-2)|\nabla U|^{p-2}\left(\frac{\nabla U\cdot\nabla \widehat{v}}{|\nabla U|}\right)^2\right]\mathrm{d}x \\
    \leq & \left[(p^*_{\beta}-1)+\tau\right]
    \int_{\mathbb{R}^N}|x|^{\beta}U^{p^*_{\beta}-2}|\widehat{v}|^2
    \mathrm{d}x,
    \end{align*}
    which contradicts Proposition \ref{propevl} due to the orthogonality of $\widehat{v}$ to $T_{U} \mathcal{M}_\beta$ (being the strong $L^2_{\beta,*}(\mathbb{R}^N)$ limit of $\widehat{v}_n$).
    \end{proof}

    \begin{lemma}\label{lemsgap2}
    Let $1<p<2\leq N$ and $0<\beta<p$. Given any $\gamma_0>0$, $C_1>0$ there exists $\overline{\delta}=\overline{\delta}(N,p,\beta,\gamma_0,C_1)>0$ such that for any function $v\in \mathcal{D}^{1,p}_{0}(\mathbb{R}^N)$ orthogonal to $T_{U} \mathcal{M}_\beta$ in $L^2_{\beta,*}(\mathbb{R}^N)$ satisfying $\|v\|\leq \overline{\delta}$, the following holds:
    \begin{itemize}
    \item[$(i)$]
    when $1<p\leq\frac{2N}{N+2-\beta}$, we have
    \begin{small}
    \begin{align*}
    & \int_{\mathbb{R}^N}\left[
    |\nabla U|^{p-2}|\nabla v|^2
    +(p-2)|\omega|^{p-2}(|\nabla (U+v)|-|\nabla U|)^2
    +\gamma_0 \min\{|\nabla v|^p,|\nabla U|^{p-2}|\nabla v|^2\}
    \right]\mathrm{d}x \\
    \geq & \left[(p^*_{\beta}-1)+\tau\right]
    \int_{\mathbb{R}^N}|x|^{-\beta}
    \frac{(U+C_1|v|)^{p^*_{\beta}}}{U^2+|v|^2}|v|^2\mathrm{d}x;
    \end{align*}
    \end{small}
    \item[$(ii)$]
    when $\frac{2N}{N+2-\beta}< p<2$, we have
    \begin{small}
    \begin{align*}
    & \int_{\mathbb{R}^N}\left[
    |\nabla U|^{p-2}|\nabla v|^2
    +(p-2)|\omega|^{p-2}(|\nabla (U+v)|-|\nabla U|)^2
    +\gamma_0 \min\{|\nabla v|^p,|\nabla U|^{p-2}|\nabla v|^2\}
    \right]\mathrm{d}x \\
    \geq & \left[(p^*_{\beta}-1)+\tau\right]
    \int_{\mathbb{R}^N}|x|^{-\beta}U^{p^*_{\beta}-2}|v|^2\mathrm{d}x,
    \end{align*}
    \end{small}
    \end{itemize}
    where $\tau>0$ is given in Proposition \ref{propevl}, and $\omega: \mathbb{R}^{2N}\to \mathbb{R}^N$ is defined in analogy to Lemma \ref{lemui1p}:
    \begin{eqnarray*}
    \omega=\omega(\nabla U,\nabla (U+v))=
    \left\{ \arraycolsep=1.5pt
       \begin{array}{ll}
        \left(\frac{|\nabla (U+v)|}{(2-p)|\nabla (U+v)|+(p-1)|\nabla U|}\right)^{\frac{1}{p-2}}\nabla U,\ \ &{\rm if}\ \  |\nabla U|<|\nabla (U+v)|\\[3mm]
        \nabla U,\ \ &{\rm if}\ \ |\nabla (U+v)|\leq |\nabla U|
        \end{array}.
    \right.
    \end{eqnarray*}
    \end{lemma}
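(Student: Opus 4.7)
The plan is to imitate the compactness/contradiction scheme of Lemma \ref{lemsgap}, with the difference that for $1<p<2$ one cannot work directly in $\mathcal{D}^{1,2}_{0,*}(\mathbb{R}^N)$; instead one must exploit the sharper compactness of Lemma \ref{propcetl} and the weighted Poincar\'{e}-type inequality of Corollary \ref{propcetlpi} in case $(i)$, or Proposition \ref{propcet} combined with the Hardy-Sobolev embedding $\mathcal{D}^{1,p}_0(\mathbb{R}^N)\hookrightarrow L^{p^*_\beta}_\beta(\mathbb{R}^N)$ in case $(ii)$. I argue by contradiction: suppose there exists a sequence $v_n\in\mathcal{D}^{1,p}_0(\mathbb{R}^N)$ with $\|v_n\|_{\mathcal{D}^{1,p}_0}\to 0$, orthogonal to $T_{U}\mathcal{M}_\beta$ in $L^2_{\beta,*}(\mathbb{R}^N)$, which violates the claimed inequality. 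Set
\[
\varepsilon_n^2:=\int_{\mathbb{R}^N}(|\nabla U|+|\nabla v_n|)^{p-2}|\nabla v_n|^2\,\mathrm{d}x,\qquad \widehat v_n:=v_n/\varepsilon_n,
\]
so that $\int_{\mathbb{R}^N}(|\nabla U|+\varepsilon_n|\nabla \widehat v_n|)^{p-2}|\nabla \widehat v_n|^2\,\mathrm{d}x=1$; the algebraic bounds of Lemmas \ref{lemui1p} and \ref{lemui1p*l} combined with H\"{o}lder force $\varepsilon_n\to 0$.

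The second step is passing to the limit. In case $(i)$, Lemma \ref{propcetl} yields $\widehat v_n\rightharpoonup \widehat v$ in $\mathcal{D}^{1,p}_0(\mathbb{R}^N)$ with $\widehat v\in L^2_{\beta,*}(\mathbb{R}^N)$, and, crucially,
\[
\int_{\mathbb{R}^N}|x|^{-\beta}\frac{(U+C_1\varepsilon_n\widehat v_n)^{p^*_{\beta}}}{U^2+\varepsilon_n^2\widehat v_n^2}|\widehat v_n|^2\,\mathrm{d}x\longrightarrow \int_{\mathbb{R}^N}|x|^{-\beta}U^{p^*_{\beta}-2}|\widehat v|^2\,\mathrm{d}x,
\]
so the right-hand side of the target inequality, once divided by $\varepsilon_n^2$, converges as needed; in case $(ii)$ the condition $p^*_\beta>2$ lets Proposition \ref{propcet} play the same role directly, bypassing the truncation step. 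For the left-hand side I would fix an arbitrarily large annulus $B_R\setminus B_{1/R}$ and split it into the good set $\mathcal{R}_n:=\{\varepsilon_n|\nabla\widehat v_n|\leq 2|\nabla U|\}$ and the bad set $\mathcal{S}_n$ as in Lemma \ref{lemsgap}. Using the identity
\[
\frac{|\nabla(U+v_n)|-|\nabla U|}{\varepsilon_n}=\Bigl(\int_0^1\frac{\nabla U+t\nabla v_n}{|\nabla U+t\nabla v_n|}\,\mathrm{d}t\Bigr)\cdot\nabla\widehat v_n,
\]
the fact $|\omega|\sim|\nabla U|$ on $\mathcal{R}_n$, and Lebesgue's dominated convergence, one obtains on $\mathcal{R}_n$ the lower bound matching the quadratic form associated with $\mathcal{L}_{U}$ acting on $\widehat v$. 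The orthogonality of $\widehat v$ to $T_{U}\mathcal{M}_\beta$ is preserved because the weighted $L^2_{\beta,*}$-pairing with $T_{U}\mathcal{M}_\beta$ is continuous under the convergence supplied by Lemma \ref{propcetl} or Proposition \ref{propcet}. Applying Proposition \ref{propevl} to $\widehat v$ then yields the desired contradiction, provided the bad set and the min-regularization contribute nonnegatively in the limit.

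The main obstacle is precisely the control on the bad set $\mathcal{S}_n$ coupled with the negative contribution $(p-2)|\omega|^{p-2}(|\nabla(U+v_n)|-|\nabla U|)^2$. Here the specific definition of $\omega$ for $p<2$ is essential: the algebraic inequalities of Lemma \ref{lemui1p} in Appendix \ref{sectpls} show that, pointwise,
\[
|\nabla U|^{p-2}|\nabla v|^2+(p-2)|\omega|^{p-2}\bigl(|\nabla(U+v)|-|\nabla U|\bigr)^2+\gamma_0\min\{|\nabla v|^p,|\nabla U|^{p-2}|\nabla v|^2\}\geq 0,
\]
and is actually comparable, up to constants depending on $\gamma_0$, to $(|\nabla U|+|\nabla v|)^{p-2}|\nabla v|^2$. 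Thus the $\gamma_0$-regularization precisely compensates the negative contribution coming from the bad set; once this is in place, those terms can be discarded when taking the limit inferior, and the argument reduces to the $p\geq 2$ scheme already carried out in Lemma \ref{lemsgap}.
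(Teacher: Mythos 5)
Your overall skeleton matches the paper's proof: the contradiction setup with the normalization $\varepsilon_n^2=\int(|\nabla U|+|\nabla v_n|)^{p-2}|\nabla v_n|^2\mathrm{d}x$, the good/bad splitting $\mathcal{R}_{n,R}\cup\mathcal{S}_{n,R}$ on annuli, Lemma \ref{propcetl} for the convergence of the weighted quotient and Corollary \ref{propcetlpi} for the uniform bound in case $(i)$, the integral identity for $(|\nabla(U+v_n)|-|\nabla U|)/\varepsilon_n$ with dominated convergence in the liminf, and the final contradiction with Proposition \ref{propevl}. One inaccuracy worth noting: by the algebraic inequality \eqref{evbcb2i} the first two terms of the integrand are already pointwise nonnegative for this choice of $\omega$; the role of the $\gamma_0\min\{\cdot\}$ term is not to compensate a negative contribution but to provide coercivity on the bad set, and the claimed two-sided comparability with $(|\nabla U|+|\nabla v|)^{p-2}|\nabla v|^2$ fails where $|\nabla v|\gg|\nabla U|$ (only the lower bound holds and only that is needed, e.g.\ to get \eqref{evbcb2li} and hence that the limit $\widehat v$ is nontrivial).

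The genuine gap is in case $(ii)$: Proposition \ref{propcet} cannot ``play the same role directly.'' To apply it to $\widehat v_n$ you would need a uniform bound on $\int_{\mathbb{R}^N}|\nabla U|^{p-2}|\nabla \widehat v_n|^2\mathrm{d}x$, i.e.\ boundedness in $\mathcal{D}^{1,2}_{0,*}(\mathbb{R}^N)$. For $1<p<2$ the contradiction hypothesis, via \eqref{evbcb2i}, controls this quantity only on the good set $\{|\nabla v_n|\leq 2|\nabla U|\}$; on the bad set it only controls $\varepsilon_n^{p-2}\int|\nabla \widehat v_n|^p$, and since $|\nabla U|^{p-2}\geq(|\nabla U|+|\nabla v_n|)^{p-2}$ when $p<2$, the $|\nabla U|^{p-2}$-weighted Dirichlet energy of $\widehat v_n$ may well be unbounded there, so the hypothesis of Proposition \ref{propcet} is simply not available. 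What the paper actually uses in this range (where $p^*_\beta>2$) is: boundedness of $\widehat v_n$ in $\mathcal{D}^{1,p}_0(\mathbb{R}^N)$ (the analogue of \eqref{vihl}), local strong $L^2$ convergence by Rellich--Kondrachov, and a uniform tail estimate from H\"older with exponent $p^*_\beta/2>1$ combined with the Hardy--Sobolev inequality; together these give strong convergence $\widehat v_n\to\widehat v$ in $L^2_{\beta,*}(\mathbb{R}^N)$, which is what allows the right-hand side, the nondegeneracy bound and the orthogonality to pass to the limit. Replacing your appeal to Proposition \ref{propcet} by this argument (or by a truncation step as in case $(i)$) closes the proof; as written, that step would fail.
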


    \begin{proof}
    The proof is similar to the proof of Lemma \ref{lemsgap}, but it is more complicated.
    We argue by contradiction in these two cases.

    $\bullet$ {\em The case $1<p\leq\frac{2N}{N+2-\beta}$} which implies $p^*_{\beta}\leq2$.
    Suppose the inequality does not hold, then there exists a sequence $0\not\equiv v_n\to 0$ in $\mathcal{D}^{1,p}_{0}(\mathbb{R}^N)$, with $v_n$ orthogonal to $T_{U} \mathcal{M}_\beta$, such that
    \begin{small}
    \begin{align}\label{evbc2}
    & \int_{\mathbb{R}^N}\Big[
    |\nabla U|^{p-2}|\nabla v_n|^2
    +(p-2)|\omega_n|^{p-2}(|\nabla (U+v_n)|-|\nabla U|)^2
    \nonumber\\& \quad \quad +\gamma_0 \min\{|\nabla v_n|^p,|\nabla U|^{p-2}|\nabla v_n|^2\}
    \Big]\mathrm{d}x \nonumber\\
     <  & \left[(p^*_{\beta}-1)+\tau\right]
    \int_{\mathbb{R}^N}|x|^{-\beta}
    \frac{(U+C_1|v_n|)^{p^*_{\beta}}}{U^2+|v_n|^2}|v_n|^2\mathrm{d}x,
    \end{align}
    \end{small}
    where $\omega_n$ corresponds to $v_n$ as in the statement.
    Let
    \begin{align}\label{defevh}
    \varepsilon_n:=\left(\int_{\mathbb{R}^N}(|\nabla U|+|\nabla v_n|)^{p-2}|\nabla v_n|^2 \mathrm{d}x\right)^{\frac{1}{2}},\quad \widehat{v}_n=\frac{v_n}{\varepsilon_n}.
    \end{align}
    Note that, since $1<p<2$, it follows by H\"{o}lder inequality that
    \begin{align*}
    \int_{\mathbb{R}^N}(|\nabla U|+|\nabla v_n|)^{p-2}|\nabla v_n|^2 dx\leq & \int_{\mathbb{R}^N}|\nabla v_n|^{p-2}|\nabla v_n|^2\mathrm{d}x
    = \int_{\mathbb{R}^N}|\nabla v_n|^{p}\mathrm{d}x
    \to 0,
    \end{align*}
    hence $\varepsilon_n\to 0$, as $n\to \infty$.

    Since the integrand in the left hand side of \eqref{evbc2} is nonnegative, for any $R>1$ we have
    \begin{align}\label{evbcb2}
    & \int_{B_R\backslash B_{\frac{1}{R}}}\bigg[
    |\nabla U|^{p-2}|\nabla \widehat{v}_n|^2
    +(p-2)|\omega_n|^{p-2}\left(\frac{|\nabla (U+v_n)|-|\nabla U|}{\varepsilon_n}\right)^2 \nonumber\\
    & \quad\quad + \gamma_0 \min\{\varepsilon_n^{p-2}|\nabla \widehat{v}_n|^p,|\nabla U|^{p-2}|\nabla \widehat{v}_n|^2\}
    \bigg]\mathrm{d}x \nonumber\\
    < & \left[(p^*_{\beta}-1)+\tau\right]
    \int_{\mathbb{R}^N}|x|^{-\beta}\frac{(U+C_1|v_n|)^{p^*_{\beta}}}
    {U^2+|v_n|^2}|\widehat{v}_n|^2\mathrm{d}x.
    \end{align}
    Now, same as the proof of Lemma \ref{lemsgap}, let us
    fix $R>1$ which can be chosen arbitrarily large, and set
    \begin{align}\label{defrsi}
    \mathcal{R}_n:=\{2|\nabla U|\geq |\nabla v_n|\}&,\quad \mathcal{S}_n:=\{2|\nabla U|< |\nabla v_n|\},  \nonumber\\
    \mathcal{R}_{n,R}:=\left(B_R\backslash B_{\frac{1}{R}}\right)\cap \mathcal{R}_n&,\quad
    \mathcal{S}_{n,R}:=\left(B_R\backslash B_{\frac{1}{R}}\right)\cap \mathcal{S}_n,
    \end{align}
    thus $B_R\backslash B_{\frac{1}{R}}=\mathcal{R}_{n,R} \cup \mathcal{S}_{n,R}$.
    From \cite[(2.2)]{FZ22}, that is, for $1<p<2$, there exists $c(p)>0$ such that
    \begin{equation}\label{evbcb2i}
    p|x|^{p-2}|y|^2+p(p-2)|\omega|^{p-2}(|x|-|x+y|)^2\geq c(p)\frac{|x|}{|x|+|y|}|x|^{p-2}|y|^2,\quad \forall x\neq 0,\forall y\in\mathbb{R}^N,
    \end{equation}
    we have
    \begin{align*}
    & \left|\frac{\nabla U}{\varepsilon_n}\right|^{p-2}|\nabla \widehat{v}_n|^2
    +(p-2)\left|\frac{\omega_n}{\varepsilon_n}\right|^{p-2}
    \left(\left|\frac{\nabla U }{\varepsilon_n}+\nabla \widehat{v}_n\right|-\left|\frac{\nabla  U}{\varepsilon_n}\right|\right)^2 \\
    \geq & c(p)\frac{|\nabla U|/\varepsilon_n}{|\nabla U|/\varepsilon_n+|\nabla \widehat{v}_n|}\left|\frac{\nabla  U}{\varepsilon_n}\right|^{p-2}|\nabla \widehat{v}_n|^2,
    \end{align*}
    then,
    \[
    \left| \nabla U \right|^{p-2}|\nabla \widehat{v}_n|^2
    +(p-2)\left| \omega_n \right|^{p-2}\left(\frac{|\nabla (U+v_n)|-|\nabla  U| }{\varepsilon_n}\right)^2
     \geq c(p) \left| \nabla  U \right|^{p-2}|\nabla \widehat{v}_n|^2,\quad \mbox{in}\quad \mathcal{R}_{n,R}.
    \]
    Therefore, combining this bound with \eqref{evbcb2}, we obtain
    \begin{align}\label{evbcb2l}
    & c(p)\int_{\mathcal{R}_{n,R}}\left| \nabla  U \right|^{p-2}|\nabla \widehat{v}_n|^2 \mathrm{d}x
    +\gamma_0\varepsilon^{p-2}_n\int_{\mathcal{S}_{n,R}}|x|^\alpha|\nabla \widehat{v}_n|^p \mathrm{d}x \nonumber\\
    \leq & \int_{B_R\backslash B_{\frac{1}{R}}}\bigg[
    |\nabla U|^{p-2}|\nabla \widehat{v}_n|^2
    +(p-2)|\omega_n|^{p-2}\left(\frac{|\nabla (U+v_n)|-|\nabla U|}{\varepsilon_n}\right)^2 \nonumber\\
    & \quad\quad + \gamma_0 \min\{\varepsilon_n^{p-2}|\nabla \widehat{v}_n|^p,|\nabla U|^{p-2}|\nabla \widehat{v}_n|^2\}
    \bigg]\mathrm{d}x \nonumber\\
    < & \left[(p^*_{\beta}-1)+\tau\right]
    \int_{\mathbb{R}^N}|x|^{-\beta}
    \frac{(U+C_1|v_n|)^{p^*_{\beta}}}{U^2+|v_n|^2}|\widehat{v}_n|^2
    \mathrm{d}x.
    \end{align}
    In particular, this implies that
    \begin{align}\label{evbcb2li}
    1= & \varepsilon^{-2}_n\int_{\mathbb{R}^N}(|\nabla U|+|\nabla v_n|)^{p-2}|\nabla v_n|^2 dx \nonumber\\
    \leq & C(p)\left[\int_{\mathcal{R}_{n}}\left| \nabla  U \right|^{p-2}|\nabla \widehat{v}_n|^2 \mathrm{d}x
    +\varepsilon^{p-2}_n\int_{\mathcal{S}_{n}}|\nabla \widehat{v}_n|^p \mathrm{d}x\right] \nonumber\\
    \leq & C(N,p,\gamma_0)\left[(p^*_{\beta}-1)+\tau\right]
    \int_{\mathbb{R}^N}|x|^{-\beta}
    \frac{(U+C_1|v_n|)^{p^*_{\beta}}}{U^2+|v_n|^2}|\widehat{v}_n|^2
    \mathrm{d}x.
    \end{align}
    Furthermore, thanks to \eqref{propcetlcpi} in Corollary \ref{propcetlpi}, for $n$ large enough so that $\varepsilon_n$ small we have
    \begin{align}\label{evbcb2lib}
    \int_{\mathbb{R}^N}|x|^{-\beta}
    \frac{(U+C_1|v_n|)^{p^*_{\beta}}}{U^2+|v_n|^2}|\widehat{v}_n|^2
    \mathrm{d}x
    \leq & C(N,p,C_1)\int_{\mathbb{R}^N}|x|^{-\beta}
    (U+|v_n|)^{p^*_{\beta}-2}|\widehat{v}_n|^2\mathrm{d}x \nonumber\\
    \leq & C(N,p,\beta,C_1)\int_{\mathbb{R}^N}
    (|\nabla U|+|\nabla v_n|)^{p-2}|\nabla \widehat{v}_n|^2\mathrm{d}x \nonumber\\
    = & C(N,p,\beta,C_1).
    \end{align}
    Hence, combining \eqref{evbcb2l} with \eqref{evbcb2lib}, by the definition of $\mathcal{S}_{i,R}$ we have
    \begin{equation*}
    \begin{split}
    \varepsilon^{-2}_n\int_{\mathcal{S}_{n,R}}|\nabla U|^p \mathrm{d}x
    \leq \frac{\varepsilon^{-2}_n}{2^p}\int_{\mathcal{S}_{n,R}}|\nabla v_n|^p \mathrm{d}x
    = \frac{\varepsilon^{p-2}_n}{2^p}\int_{\mathcal{S}_{n,R}}|\nabla \widehat{v}_n|^p \mathrm{d}x
    \leq C(N,p,\beta,C_1),
    \end{split}
    \end{equation*}
    then since
    \[
    0<c(R)\leq |\nabla U|\leq C(R)\quad \mbox{inside}\quad B_R\backslash B_{\frac{1}{R}},\quad \forall R>1,
    \]
    for some constants $c(R)\leq C(R)$ depending only on $R$, we conclude that
    \begin{equation}\label{csirt0}
    |\mathcal{S}_{n,R}|\to 0\quad \mbox{as}\quad n\to \infty,\quad \forall R>1.
    \end{equation}
    Now, from \eqref{defevh} we have
    \[
    \int_{\mathbb{R}^N}(|\nabla U|+\varepsilon_n|\nabla \widehat{v}_n|)^{p-2}|\nabla \widehat{v}_n|^2 \mathrm{d}x\leq 1,
    \]
    then according to Lemma \ref{propcetl}, we deduce that $\widehat{v}_n\rightharpoonup \widehat{v}$ in $\mathcal{D}^{1,p}_0(\mathbb{R}^N)$ for some $\widehat{v}\in \mathcal{D}^{1,p}_{0}(\mathbb{R}^N)\cap L^2_{\beta,*}(\mathbb{R}^N)$, and
    \begin{equation}\label{csirtc}
    \int_{\mathbb{R}^N}|x|^{-\beta}
    \frac{(U+C_1|v_n|)^{p^*_{\beta}}}{U^2+|v_n|^2}|\widehat{v}_n|^2
    \mathrm{d}x
    \to \int_{\mathbb{R}^N}|x|^{-\beta}U^{p^*_{\beta}-2}
    |\widehat{v}|^2\mathrm{d}x,
    \end{equation}
    as $n\to \infty$, for any $C_1\geq 0$.
    Also, using \eqref{evbcb2l} and \eqref{evbcb2lib} again we have
    \begin{equation*}
    \int_{\mathcal{R}_{n,R}}\left| \nabla  U \right|^{p-2}|\nabla \widehat{v}_n|^2 \mathrm{d}x
    \leq C(N,p,\beta,C_1),
    \end{equation*}
    therefore \eqref{csirt0} and $\widehat{v}_n\rightharpoonup \widehat{v}$ in $\mathcal{D}^{1,p}_0(\mathbb{R}^N)$ imply that, up to a subsequence,
    \[
    \widehat{v}_n \chi_{\mathcal{R}_{n,R}}\rightharpoonup \widehat{v} \chi_{B_R\backslash B_{\frac{1}{R}}}\quad \mbox{in}\quad \mathcal{D}^{1,2}_{0,*}(\mathbb{R}^N),\quad \forall R>1.
    \]
    Here $\chi_E$ denotes that $\chi_E=1$ if $x\in E$ and  $\chi_E=0$ if $x\notin E$.
    In addition, letting $n\to \infty$ in \eqref{evbcb2li} and \eqref{evbcb2lib}, and using \eqref{csirtc}, we deduce that
    \begin{equation}\label{csirtl1}
    0<c(N,p,\beta,C_1,\gamma_0)
    \leq \|\widehat{v}\|_{L^2_{\beta,*}(\mathbb{R}^N)}
    \leq C(N,p,\beta,C_1).
    \end{equation}
    Let us write
    \[
    \widehat{v}_n=\widehat{v}+\varphi_n,\quad\mbox{with}\quad \varphi_n:= \widehat{v}_n-\widehat{v},
    \]
    we have
    \[
    \varphi_n \rightharpoonup 0 \quad \mbox{in}\quad   \mathcal{D}^{1,p}_{0}(\mathbb{R}^N)
    \quad \mbox{and}\quad \varphi_n \chi_{\mathcal{R}_{n}}\rightharpoonup 0
    \quad \mbox{locally in} \quad \mathcal{D}^{1,2}_{0,*}(\mathbb{R}^N\setminus\{\mathbf{0}\}).
    \]
    We now look at the left side of \eqref{evbcb2}.
    The strong convergence $v_n\to 0$ in $\mathcal{D}^{1,p}_{0}(\mathbb{R}^N)$ implies that, $|\omega_n|\to |\nabla U|$ a.e. in $\mathbb{R}^N$. Then, let us rewrite
    \begin{align*}
    \left(\frac{|\nabla (U+v_n)|-|\nabla U|}{\varepsilon_n}\right)^2
    = & \left(\left[\int^1_0\frac{\nabla U+ t\nabla v_n}{|\nabla U+ t\nabla v_n|}\mathrm{d}t\right]\cdot \nabla \widehat{v}_n\right)^2 \\
    = & \left(\left[\int^1_0\frac{\nabla U+ t\nabla v_n}{|\nabla U+ t\nabla v_n|}\mathrm{d}t\right]\cdot \nabla (\widehat{v}+\varphi_n)\right)^2.
    \end{align*}
    Hence, if we set
    \[
    f_{n,1}=\left[\int^1_0\frac{\nabla U+ t\nabla v_n}{|\nabla U+ t\nabla v_n|}\mathrm{d}t\right]\cdot \nabla \widehat{v},\quad
    f_{n,2}=\left[\int^1_0\frac{\nabla U+ t\nabla v_n}{|\nabla U+ t\nabla v_n|}\mathrm{d}t\right]\cdot \nabla \varphi_n,
    \]
    since $\frac{\nabla U+ t\nabla v_n}{|\nabla U+ t\nabla v_n|}\to \frac{\nabla U}{|\nabla U|}$ a.e., it follows from Lebesgue's dominated convergence theorem that
    \[
    f_{n,1}\to \frac{\nabla U}{|\nabla U|}\cdot\nabla \widehat{v}\quad \mbox{locally in}\quad L^2(\mathbb{R}^N\backslash\{\mathbf{0}\}),\quad f_{n,2}\chi_{\mathcal{R}_n}\rightharpoonup 0\quad \mbox{locally in}\quad L^2(\mathbb{R}^N\backslash\{\mathbf{0}\}).
    \]
    Thus, the left hand side of \eqref{evbcb2} from below as follows:
    \begin{align}\label{evbcbb2}
    & \int_{\mathcal{R}_{n,R}}\left[|\nabla U|^{p-2}|\nabla \widehat{v}_n|^2+(p-2)|\omega_n|^{p-2}\left(\frac{|\nabla (U+v_n)|-|\nabla U|}{\varepsilon_n}\right)^2\right]\mathrm{d}x \nonumber\\
    = & \int_{\mathcal{R}_{n,R}}\left[|\nabla U|^{p-2}\left(|\nabla \widehat{v}|^2+2 \nabla \varphi_n\cdot \nabla \widehat{v}\right)+(p-2)|\omega_n|^{p-2}
    \left(f_{n,1}^2+2f_{n,1}f_{n,2}\right)\right]\mathrm{d}x \nonumber\\
    & + \int_{\mathcal{R}_{n,R}}\left[|\nabla U|^{p-2} |\nabla \varphi_n|^2+(p-2)|\omega_n|^{p-2}f_{n,2}^2\right]\mathrm{d}x
    \nonumber\\
    \geq & \int_{\mathcal{R}_{n,R}}\left[|\nabla U|^{p-2}\left(|\nabla \widehat{v}|^2+2 \nabla \varphi_n\cdot \nabla \widehat{v}\right)+(p-2)|\omega_n|^{p-2}
    \left(f_{n,1}^2+2f_{n,1}f_{n,2}\right)\right]\mathrm{d}x,
    \end{align}
    where the last inequality follows from the nonnegativity of $\left[|\nabla U|^{p-2} |\nabla \varphi_n|^2+(p-2)|\omega_n|^{p-2}f_{n,2}^2\right]$ (thanks to \eqref{evbcb2i} and the fact that $f_{n,2}^2\leq |\nabla \varphi_n|^2$). Then, combining the convergence
    \begin{equation*}
    \begin{split}
    & \nabla \varphi_n \chi_{\mathcal{R}_n}\rightharpoonup 0,
    \quad f_{n,1}\to \frac{\nabla U}{|\nabla U|}\cdot\nabla \widehat{v},
    \quad f_{n,2}\chi_{\mathcal{R}_n}\rightharpoonup 0,
    \quad \mbox{locally in}\quad L^2(\mathbb{R}^N\backslash\{\mathbf{0}\}),\\
    & |\omega_n|\to |\nabla U|\quad \mbox{a.e.}, \quad |(B_R\backslash B_{\frac{1}{R}})\backslash\mathcal{R}_{n,R}|=|\mathcal{S}_{n,R}|\to 0,
    \end{split}
    \end{equation*}
    with the fact that
    \[
    |\omega_n|^{p-2}\leq C(p)|\nabla U|^{p-2},
    \]
    by Lebesgue's dominated convergence theorem, we deduce that
    \begin{align*}
    & \liminf_{n\to \infty}\int_{\mathcal{R}_{n,R}}\left[|\nabla U|^{p-2}\left(|\nabla \widehat{v}|^2+2 \nabla \varphi_n\cdot \nabla \widehat{v}\right)+(p-2)|\omega_n|^{p-2}
    \left(f_{n,1}^2+2f_{n,1}f_{n,2}\right)\right]\mathrm{d}x \\
    \to & \int_{B_R\backslash B_{\frac{1}{R}}}\left[|\nabla U|^{p-2}|\nabla \widehat{v} |^2+(p-2)|\nabla U|^{p-2}\left(\frac{\nabla U\cdot\nabla \widehat{v} }{| \nabla U|}\right)^2\right]\mathrm{d}x,
    \end{align*}
    thus from \eqref{evbcbb2} we obtain
    \begin{align*}
    & \liminf_{n\to \infty}\int_{\mathcal{R}_{n,R}}\left[|\nabla U|^{p-2}|\nabla \widehat{v}_n|^2+(p-2)|\omega_n|^{p-2}\left(\frac{|\nabla (U+v_n)|-|\nabla U|}{\varepsilon_n}\right)^2\right]\mathrm{d}x \\
    \geq & \int_{B_R\backslash B_{\frac{1}{R}}}\left[|\nabla U|^{p-2}|\nabla \widehat{v} |^2+(p-2)|\nabla U|^{p-2}\left(\frac{\nabla U\cdot\nabla \widehat{v} }{| \nabla U|}\right)^2\right]\mathrm{d}x.
    \end{align*}
    Recalling \eqref{evbcb2l} and \eqref{csirtc}, since $R>1$ is arbitrary and the integrand is nonnegative, this proves that
    \begin{align}\label{uinb2p2}
    & \int_{\mathbb{R}^N}\left[|\nabla U|^{p-2}|\nabla \widehat{v}|^2+(p-2)|\nabla U|^{p-2}\left(\frac{\nabla U\cdot\nabla \widehat{v}}{|\nabla U|}\right)^2\right]\mathrm{d}x \nonumber\\
    \leq  &  \left[(p^*_{\beta}-1)+\tau\right]
    \int_{\mathbb{R}^N}|x|^{-\beta}U^{p^*_{\beta}-2}|\widehat{v}|^2\mathrm{d}x,
    \end{align}
    The orthogonality of $v_n$ (and also of $\widehat{v}_n$) implies that $\widehat{v}$ also is orthogonal to $T_{U} \mathcal{M}_\beta$. Since $\widehat{v}\in L^2_{\beta,*}(\mathbb{R}^N)$, \eqref{csirtl1} and \eqref{uinb2p2} contradict Proposition \ref{propevl}, the proof is complete.

    $\bullet$ {\em The case $\frac{2N}{N+2-\beta}< p<2$} which implies $p^*_{\beta}> 2$. If the statement fails, there exists a sequence $0\not\equiv v_n\to 0$ in $\mathcal{D}^{1,p}_{0}(\mathbb{R}^N)$, with $v_n$ orthogonal to $T_{U} \mathcal{M}_\beta$, such that
    \begin{small}
    \begin{align}\label{evbc2g}
    & \int_{\mathbb{R}^N}\bigg[
    |\nabla U|^{p-2}|\nabla v_n|^2
    +(p-2)|\omega_n|^{p-2}(|\nabla (U+v_n)|-|\nabla U|)^2
    \nonumber\\ &\quad\quad +\gamma_0 \min\{|\nabla v_n|^p,|\nabla U|^{p-2}|\nabla v_n|^2\}
    \bigg]\mathrm{d}x
    \nonumber\\  < &  \left[(p^*_{\beta}-1)+\tau\right]
    \int_{\mathbb{R}^N}|x|^{-\beta}U^{p^*_{\beta}-2}|v_n|^2\mathrm{d}x,
    \end{align}
    \end{small}
    where $\omega_n$ corresponds to $v_n$ as in the statement. As in the case $1<p<\frac{2N}{N+2-\beta}$, we define
    \[
    \varepsilon_n:=\left(\int_{\mathbb{R}^N}(|\nabla U|+|\nabla v_n|)^{p-2}|\nabla v_n|^2 \mathrm{d}x\right)^{\frac{1}{2}},\quad \widehat{v}_n=\frac{v_n}{\varepsilon_n},
    \]
    and we also have $\varepsilon_n\to 0$ as $n\to \infty$.

    Then, we also split $B_R\backslash B_{\frac{1}{R}}=\mathcal{R}_{n,R} \cup \mathcal{S}_{n,R}$, \eqref{evbcb2l} and \eqref{evbcb2li} hold also in this case, with the only difference that the last term in both equations now becomes
    \[
    \left[(p^*_{\beta}-1)+ \tau\right]\int_{\mathbb{R}^N}
    |x|^{-\beta}U^{p^*_{\beta}-2}|v_n|^2\mathrm{d}x,
    \]
    which is much simpler.

    We now observe that, by using H\"{o}lder inequality, we have
    \begin{align*}
    \int_{\mathbb{R}^N}|\nabla \widehat{v}_n|^p \mathrm{d}x
    \leq & \left(\int_{\mathbb{R}^N}(|\nabla U|+|\nabla v_n|)^{p-2}|\nabla \widehat{v}_n|^2 \mathrm{d}x\right)^{\frac{p}{2}}
    \left(\int_{\mathbb{R}^N}(|\nabla U|+|\nabla v_n|)^{p}\mathrm{d}x\right)^{1-\frac{p}{2}} \\
    = & \left(\int_{\mathbb{R}^N}(|\nabla U|+|\nabla v_n|)^{p}\mathrm{d}x\right)^{1-\frac{p}{2}} \\
    \leq & C(p)\left[\left(\int_{\mathbb{R}^N}|\nabla U|^{p}\mathrm{d}x\right)^{1-\frac{p}{2}}
    +\varepsilon_n^{\frac{p(2-p)}{2}}\left(\int_{\mathbb{R}^N} |\nabla \widehat{v}_n|^{p}\mathrm{d}x\right)^{1-\frac{p}{2}}\right]
    \end{align*}
    from which it follows that
    \begin{equation}\label{vihl}
    \begin{split}
    \int_{\mathbb{R}^N}|\nabla \widehat{v}_n|^p \mathrm{d}x
    \leq C(N,p,\beta).
    \end{split}
    \end{equation}
    Thus, up to a subsequence, $\widehat{v}_n\rightharpoonup \widehat{v}$ in $\mathcal{D}^{1,p}_{0}(\mathbb{R}^N)$ and $\widehat{v}_n \to \widehat{v}$ locally in $L^2(\mathbb{R}^N)$. In addition, Then by H\"{o}lder inequality and Sobolev inequality, together with \eqref{vihl}, yield for any $\rho>0$, it holds that
    \begin{align*}
    \int_{\mathbb{R}^N\backslash B_\rho} |x|^{-\beta}U^{p^*_{\beta}-2}|\widehat{v}_n|^2 \mathrm{d}x
    \leq & \left(\int_{\mathbb{R}^N\backslash B_\rho}|x|^{-\beta} U^{p^*_{\beta}}\mathrm{d}x\right)^{1-\frac{2}{p^*_{\beta}}}
    \left(\int_{\mathbb{R}^N\backslash B_\rho}|x|^{-\beta} |\widehat{v}_n|^{p^*_{\beta}}\mathrm{d}x\right)^{\frac{2}{p^*_{\beta}}} \\
    \leq & \frac{1}{\mathcal{S}_\beta}\left(\int_{\mathbb{R}^N}|x|^{-\beta} U^{p^*_{\beta}}\mathrm{d}x\right)^{1-\frac{2}{p^*_{\beta}}}
    \left(\int_{\mathbb{R}^N}|\nabla \widehat{v}_n|^{p}\mathrm{d}x\right)^{\frac{2}{p}}.
    \end{align*}
    Combining \eqref{vihl} and the strong convergence $\widehat{v}_n \to \widehat{v}$ locally in $L^2(\mathbb{R}^N)$, we conclude that $\widehat{v}_n\to \widehat{v}$ in $L^2_{\beta,*}(\mathbb{R}^N)$.

    In particular, letting $n\to \infty$ in the analogue of \eqref{evbcb2li} we obtain
    \begin{equation}\label{csirt0l}
    0<c(N,p,\beta,C_1,\gamma_0)
    \leq \|\widehat{v}\|_{L^2_{\beta,*}(\mathbb{R}^N)}
    \leq C(N,p,\beta,C_1).
    \end{equation}
    Similarly, the analogue of \eqref{evbcb2l} implies that
    \begin{equation}\label{csirt0g}
    |\mathcal{S}_{n,R}|\to 0\quad \mbox{and}\quad \int_{\mathbb{R}^N}|\nabla U|^{p-2}|\nabla \widehat{v}_n|^2 \mathrm{d}x\leq C(N,p,\beta),\quad \forall R>1.
    \end{equation}
    So, it follows from the weak convergence $\widehat{v}_n\rightharpoonup \widehat{v}$ in $\mathcal{D}^{1,p}_{0}(\mathbb{R}^N)$ that, up to a subsequence,
    \[
    \widehat{v}_n \chi_{\mathcal{R}_{n,R}}\rightharpoonup \widehat{v}\chi_{B_R\backslash B_{\frac{1}{R}}}
    \quad \mbox{locally in} \quad L^2(\mathbb{R}^N),\quad \forall R>1.
    \]
    Thanks to this bound, we can split
    \[
    \widehat{v}_n=\widehat{v}+\varphi_n,\quad\mbox{with}\quad \varphi_n:= \widehat{v}_n-\widehat{v},
    \]
    and very same argument as in the case $1<p<\frac{2N}{N+2-\beta}$ allows us to deduce that
    \begin{align*}
    & \liminf_{n\to \infty}\int_{\mathcal{R}_{n,R}}\left[|\nabla U|^{p-2}|\nabla \widehat{v}_n|^2+(p-2)|\omega_n|^{p-2}\left(\frac{|\nabla (U+v_n)|-|\nabla U|}{\varepsilon_n}\right)^2\right]\mathrm{d}x \\
     \geq & \int_{B_R\backslash B_{\frac{1}{R}}}\left[|\nabla U|^{p-2}|\nabla \widehat{v} |^2+(p-2)|\nabla U|^{p-2}\left(\frac{\nabla U\cdot\nabla \widehat{v} }{| \nabla U|}\right)^2\right]\mathrm{d}x.
    \end{align*}
    Recalling \eqref{evbc2g}, since $R>1$ is arbitrary and the integrands above are nonnegative, this proves that \eqref{uinb2p2} holds, a contradiction to Proposition \ref{propevl} since $\widehat{v}$ is orthogonal to $T_{U} \mathcal{M}_\beta$ (being the strong $L^2_{\beta,*}(\mathbb{R}^N)$ limit of $\widehat{v}_n$).
    \end{proof}

\section{Stability of Hardy-Sobolev inequality.}\label{sectpromr}

    The main ingredient of the stability of Hardy-Sobolev inequality is contained in the two lemmas below, in which the behavior near the extremal functions set $\mathcal{M}_\beta$ is studied.

    In order to shorten formulas, for each $u_n\in \mathcal{D}^{1,p}_{0}(\mathbb{R}^N)$ we denote
    \begin{equation*} 
    \begin{split}
    \|u_n\|:
    =\left(\int_{\mathbb{R}^N}|\nabla u_n|^p \mathrm{d}x\right)^{\frac{1}{p}},
    \quad \|u_n\|_*: =\left(\int_{\mathbb{R}^N}|x|^{-\beta}|u_n|^{p^*_{\beta}} \mathrm{d}x\right)^{\frac{1}{p^*_{\beta}}},
    \end{split}
    \end{equation*}
    and
    \[
    d_n:=\inf_{c\in\mathbb{R}, \lambda>0}\|u_n-cU_\lambda\|.\]

    \begin{lemma}\label{lemma:rtnm2b}
    Suppose $2\leq p<N$ and $0<\beta<p$.
    There exists a small constant $\rho_1>0$ such that for any sequence $\{u_n\}\subset \mathcal{D}^{1,p}_{0}(\mathbb{R}^N)\backslash \mathcal{M}_\beta$ satisfying $\inf_n\|u_n\|>0$ and $d_n\to 0$, it holds that
    \begin{equation}\label{rtnmb}
    \liminf_{n\to\infty}
    \frac{\|u_n\|^p
    -\mathcal{S}_\beta\|u_n\|_*^p}
    {d_n^p}
    \geq \rho_1.
    \end{equation}
    \end{lemma}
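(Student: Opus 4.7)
The plan is to adapt the classical Bianchi--Egnell scheme to the quasilinear weighted setting by combining the pointwise algebraic inequality for the $p$-Laplacian (Lemma~\ref{lemui1p}) with the improved spectral gap of Lemma~\ref{lemsgap}. Since both the functional $\|\cdot\|^p - \mathcal{S}_\beta\|\cdot\|_*^p$ and the distance $d_n$ are jointly invariant under $u\mapsto cu$ and $u\mapsto\lambda^{(N-p)/p}u(\lambda\,\cdot\,)$, after picking a minimizing pair $(c_n,\lambda_n)$ for $d_n$ and applying these invariances I may write $u_n = U + v_n$ with $\|v_n\| = d_n\to 0$. A further implicit function adjustment of $(c,\lambda)$, whose invertibility is guaranteed by the non-degeneracy in Proposition~\ref{propev}, lets me arrange that $v_n$ is $L^2_{\beta,*}$-orthogonal to $T_U\mathcal{M}_\beta$, at the cost of changing $\|v_n\|$ only by a factor $1+o(1)$.

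Next I Taylor-expand both sides around $U$. On the left, Lemma~\ref{lemui1p} applied with $a=\nabla U$, $b=\nabla v_n$ gives
\begin{align*}
\|u_n\|^p \geq{} & \|U\|^p + p\int|\nabla U|^{p-2}\nabla U\cdot\nabla v_n\,\mathrm{d}x \\
& + \frac{p}{2}\int\bigl[|\nabla U|^{p-2}|\nabla v_n|^2 + (p-2)|\omega_n|^{p-2}(|\nabla(U+v_n)|-|\nabla U|)^2\bigr]\mathrm{d}x,
\end{align*}
whose linear term vanishes, since $\int|\nabla U|^{p-2}\nabla U\cdot\nabla v_n\,\mathrm{d}x = \int|x|^{-\beta}U^{p^*_\beta-1}v_n\,\mathrm{d}x = 0$ by the equation for $U$ together with the orthogonality $v_n\perp U$. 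For the right side, Taylor expanding $\int|x|^{-\beta}|U+v_n|^{p^*_\beta}\mathrm{d}x$ (its linear term again vanishes by orthogonality) and raising to the $p/p^*_\beta$ power with the identities $\|U\|^p = \mathcal{S}_\beta\|U\|_*^p$ and $\mathcal{S}_\beta = \|U\|_*^{p^*_\beta-p}$, I obtain
\[
\mathcal{S}_\beta\|u_n\|_*^p = \|U\|^p + \frac{p(p^*_\beta-1)}{2}\int|x|^{-\beta}U^{p^*_\beta-2}v_n^2\,\mathrm{d}x + o(d_n^p),
\]
since the Taylor remainder $O(\|v_n\|_{L^{p^*_\beta}_\beta}^{p^*_\beta})$ is $O(d_n^{p^*_\beta}) = o(d_n^p)$ as $p^*_\beta > p$.

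Subtracting the two expansions and applying Lemma~\ref{lemsgap} to $v_n$ to absorb the weighted $L^2$-quadratic term against a fraction of the spectral integrand then yields
\[
\|u_n\|^p - \mathcal{S}_\beta\|u_n\|_*^p \geq \frac{p\tau/2}{(p^*_\beta-1)+\tau}\int\bigl[|\nabla U|^{p-2}|\nabla v_n|^2 + (p-2)|\omega_n|^{p-2}(|\nabla(U+v_n)|-|\nabla U|)^2\bigr]\mathrm{d}x - o(d_n^p).
\]
To finish, I split $\mathbb{R}^N$ into $\{|\nabla v_n|\leq|\nabla U|\}$, where $|\nabla v_n|^p\leq|\nabla U|^{p-2}|\nabla v_n|^2$ pointwise, and $\{|\nabla v_n|>|\nabla U|\}$, where $(|\nabla U|+|\nabla v_n|)^{p-2}|\nabla v_n|^2\geq c_p|\nabla v_n|^p$ and the left side of this latter inequality is itself dominated by the spectral integrand through a variant of Lemma~\ref{lemui1p}. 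Summing the two pieces, the spectral integral is bounded below by a constant multiple of $\int|\nabla v_n|^p = d_n^p$, so that $\liminf_n(\|u_n\|^p - \mathcal{S}_\beta\|u_n\|_*^p)/d_n^p \geq \rho_1 > 0$.

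The main technical obstacles are twofold. First, the minimization defining $d_n$ produces an orthogonality condition in the $p$-Laplacian dual pairing rather than in $L^2_{\beta,*}$; the perturbation step described in the first paragraph, whose well-posedness rests on Proposition~\ref{propev}, is what bridges this gap. Second, for $p>2$ the quadratic spectral functional does not control $\|v_n\|_{\mathcal{D}^{1,p}_0}^p$ through any single norm inequality (both H\"older and Sobolev go the wrong way), and the pointwise domain-splitting argument above --- an idea going back to Figalli--Zhang --- is essential to convert the $L^2_{\beta,*}$-coercivity into a lower bound in the desired $\mathcal{D}^{1,p}_0$-norm.
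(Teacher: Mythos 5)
Your overall skeleton (normalize so the closest manifold point is $U$, expand $\|u_n\|^p$ and $\mathcal S_\beta\|u_n\|_*^p$ to second order, kill the linear terms by orthogonality and the equation for $U$, absorb the weighted quadratic term via Lemma \ref{lemsgap}) is the same as the paper's, and your implicit-function reprojection to enforce the $L^2_{\beta,*}$-orthogonality is a reasonable substitute for the paper's direct use of the minimizing $(c_n,\lambda_n)$ (though you should only claim $\|v_n\|\leq C\,d_n$, not a factor $1+o(1)$, and note that attainment of $d_n$ is not free: the paper proves it by showing minimizing sequences have $|c|$ bounded and $\lambda$ bounded away from $0$ and $\infty$). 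The decisive problem is your last step. You apply Lemma \ref{lemui1p} \emph{without} the term $\mathcal C_1|y|^p$ and then try to recover $d_n^p$ from the leftover fraction of the spectral integrand by a domain splitting. That cannot work for $p>2$: with the definition of $\omega$ in the case $p\geq 2$ one always has $|\omega|\leq|\nabla U|$, so the spectral integrand is bounded \emph{above} by $(p-1)|\nabla U|^{p-2}|\nabla v_n|^2$, whereas on $\{|\nabla v_n|>|\nabla U|\}$ one has $|\nabla v_n|^p\geq|\nabla U|^{p-2}|\nabla v_n|^2$; the claimed domination goes in the wrong direction (take $\nabla v_n=t\,\nabla U$ with $t\gg1$: spectral integrand $=(p-1)t^2|\nabla U|^p$ versus $|\nabla v_n|^p=t^p|\nabla U|^p$). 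The failure is also global, not just pointwise: since $|\nabla U|^{p-2}\sim|x|^{-(N-1)(p-2)/(p-1)}\to0$ at infinity, choosing $v_n=d_n\,w(\cdot-x_n)$ with $|x_n|\to\infty$ fast (and adjusting to restore orthogonality) makes the whole spectral term at most $(p-1)d_n^2\int|\nabla U|^{p-2}|\nabla w_n|^2=o(d_n^p)$ while $\int|\nabla v_n|^p=d_n^p$. So the quantity you end up with after absorption can be $o(d_n^p)$, and no ``variant of Lemma \ref{lemui1p}'' can save the splitting.

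This is precisely why the paper's expansion of $\|u_n\|^p$ retains the extra term $\mathcal C_1|y|^p$ from \eqref{uinb2p}: it contributes $\mathcal C_1 d_n^p\int|\nabla w_n|^p=\mathcal C_1 d_n^p$ directly, and the spectral gap of Lemma \ref{lemsgap} (with $\kappa$ chosen small) is used only to show that the remaining quadratic difference between the two expansions is nonnegative, yielding the bound $\|u_n\|^p-\mathcal S_\beta\|u_n\|_*^p\geq\mathcal C_1 d_n^p-o(d_n^p)$. To repair your argument, reinstate the $\mathcal C_1\int|\nabla v_n|^p$ term when you invoke Lemma \ref{lemui1p}, and use the absorbed spectral term only for sign control rather than as the source of the $d_n^p$ lower bound; the rest of your outline then goes through essentially as in the paper.
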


    \begin{proof}
    We know that for each $u_n\in \mathcal{D}^{1,p}_{0}(\mathbb{R}^N)$, there exist $c_n\in\mathbb{R}$ and $\lambda_n>0$ such that $d_n=\|u_n-c_nU_{\lambda_n}\|$. In fact, since $2\leq p<N$, for each fixed $n$, from Lemma \ref{lemui1p}, we obtain that for any $0<\kappa<1$, there exists a constant $\mathcal{C}_1=\mathcal{C}_1(p,\kappa)>0$ such that
    \begin{align}\label{ikeda}
    \|u_n-cU_\lambda\|^p
    = & \int_{\mathbb{R}^N}|\nabla u_n-c\nabla U_\lambda|^p \mathrm{d}x\nonumber\\
    \geq & \int_{\mathbb{R}^N}|\nabla u_n|^p \mathrm{d}x
    -pc\int_{\mathbb{R}^N}|\nabla u_n|^{p-2}  \nabla u_n\cdot \nabla U_\lambda \mathrm{d}x  +\mathcal{C}_1|c|^{p}\int_{\mathbb{R}^N}|\nabla U_\lambda|^{p} \mathrm{d}x
    \nonumber\\ &
    +\frac{(1-\kappa)p}{2}c^2 \int_{\mathbb{R}^N}|\nabla u_n|^{p-2}  |\nabla U_\lambda|^2\mathrm{d}x \nonumber\\
    & +\frac{(1-\kappa)p(p-2)}{2}\int_{\mathbb{R}^N}|\omega(\nabla u_n, \nabla u_n-c \nabla U_{\lambda})|^{p-2}(|\nabla u_n|-|\nabla u_n-c \nabla U_{\lambda}|)^2  \mathrm{d}x \nonumber\\
    \geq & \|u_n\|^p+ \mathcal{C}_1|c|^{p}\|U\|^p-pc\int_{\mathbb{R}^N}|\nabla u_n|^{p-2}  \nabla u_n\cdot \nabla U_\lambda \mathrm{d}x\nonumber\\
    \geq & \|u_n\|^p+ \mathcal{C}_1|c|^{p}\|U\|^p-p|c|\|U\|\|u_n\|^{p-1},
    \end{align}
    where $\omega:\mathbb{R}^{2N}\to \mathbb{R}^N$ corresponds to $\nabla u_n$ and $\nabla u_n-c \nabla U_{\lambda}$ as in Lemma \ref{lemui1p}  for the case $p\geq 2$.
    Thus the minimizing sequence of $d_n$, say $\{c_{n,m},\lambda_{n,m}\}$, must satisfying $|c_{n,m}|\leq C$ which means $\{c_{n,m}\}$ is bounded.
    On the other hand,
    \begin{align*}
    \left|\int_{|\lambda x|\leq \rho}|\nabla u_n|^{p-2}  \nabla u_n\cdot \nabla U_\lambda \mathrm{d}x\right|
    \leq & \int_{|y|\leq \rho}|\nabla  (u_n)_{\frac{1}{\lambda}}(y)|^{p-1}|\nabla U(y)| \mathrm{d}y \\
    \leq & \|u_n\|^{p-1}\left(\int_{|y|\leq \rho}|\nabla U|^p \mathrm{d}y\right)^{\frac{1}{p}} \\
    = & o_\rho(1)
    \end{align*}
    as $\rho\to 0$ which is uniform for $\lambda>0$, where $(u_n)_{\frac{1}{\lambda}}(y)=\lambda^{-\frac{N-p}{p}}u_n(\lambda^{-1}y)$, and
    \begin{align*}
    \left|\int_{|\lambda x|\geq \rho}|\nabla u_n|^{p-2}  \nabla u_n\cdot \nabla U_\lambda \mathrm{d}x\right|
    \leq & \|U\|\left(\int_{|x|\geq \frac{\rho}{\lambda}}|\nabla u_n|^p \mathrm{d}y\right)^{\frac{1}{p}}
    =  o_\lambda(1)
    \end{align*}
    as $\lambda\to 0$ for any fixed $\rho>0$. By taking $\lambda\to 0$ and then $\rho\to 0$, we obtain
    \[\left|\int_{\mathbb{R}^N}|\nabla u_n|^{p-2}  \nabla u_n\cdot \nabla U_\lambda \mathrm{d}x\right| \to 0\quad \mbox{as}\quad \lambda\to 0.\]
    Moreover, by the explicit from of $U_\lambda$ we have
    \begin{equation*}
    \begin{split}
    \left|\int_{|\lambda x|\leq R}|\nabla u_n|^{p-2}  \nabla u_n\cdot \nabla U_\lambda \mathrm{d}x\right|
    \leq & \|U\|\left(\int_{| x|\leq \frac{R}{\lambda}}|\nabla u_n|^p \mathrm{d}x\right)^{\frac{1}{p}}
    = o_\lambda(1)
    \end{split}
    \end{equation*}
    as $\lambda\to +\infty$ for any fixed $R>0$, and
    \begin{equation*}
    \begin{split}
    \left|\int_{|\lambda x|\geq R}|\nabla u_n|^{p-2}  \nabla u_n\cdot \nabla U_\lambda \mathrm{d}x\right|
    \leq & \int_{|y|\geq R}|\nabla (u_n)_{\frac{1}{\lambda}}(y)|^{p-1}|\nabla U(y)| \mathrm{d}y \\
    \leq & \|u_n\|^{p-1}\left(\int_{|y|\geq R}|\nabla U|^p \mathrm{d}y\right)^{\frac{1}{p}}
    =  o_R(1)
    \end{split}
    \end{equation*}
    as $R\to +\infty$ which is uniform for $\lambda>0$. Thus, by taking first $\lambda\to +\infty$ and then $R\to +\infty$, we also obtain
    \[
    \left|\int_{\mathbb{R}^N}|\nabla u_n|^{p-2}  \nabla u_n\cdot \nabla U_\lambda \mathrm{d}x\right| \to 0\quad \mbox{as}\quad \lambda\to +\infty.
    \]
    It follows from (\ref{ikeda}) and $d_n\to 0$, $\inf_n\|u_n\|>0$ that the minimizing sequence $\{c_{n,m},\lambda_{n,m}\}$ must satisfying $1/C\leq \lambda_{n,m}\leq C$ for some $C>1$ independent of $m$,  which means $\{\lambda_{n,m}\}$ is bounded. Thus for each $u_n\in  \mathcal{D}^{1,p}_{0}(\mathbb{R}^N)\backslash \mathcal{M}_\beta$, $d_n$ can be attained by some $c_n\in\mathbb{R}$ and $\lambda_n>0$.

    Since $\mathcal{M}_\beta$ is two-dimensional manifold embedded in $\mathcal{D}^{1,p}_{0}(\mathbb{R}^N)$, that is
    \[
    (c,\lambda)\in\mathbb{R}\times\mathbb{R}_+\to cU_\lambda\in \mathcal{D}^{1,p}_{0}(\mathbb{R}^N),
    \]
    then from \eqref{czkj}, under suitable transform, the tangential space is
    \[
    T_{c_n U_{\lambda_n}}\mathcal{M}_\beta={\rm Span}\left\{U_{\lambda_n}, \ \frac{\partial U_\lambda}{\partial\lambda}\Big|_{\lambda=\lambda_n}\right\}.
    \]
    Then we have,
    \[
    \int_{\mathbb{R}^N}|x|^{-\beta}U_{\lambda_n}^{p^*_{\alpha,\beta}-2}(u_n-c_n U_{\lambda_n})\xi \mathrm{d}x=0,\quad \forall \xi \in T_{c_n U_{\lambda_n}}\mathcal{M}_\beta,
    \]
    particularly, taking $\xi=U_{\lambda_n}$ we obtain
    \begin{equation*}
    \int_{\mathbb{R}^N}|x|^{-\beta}U_{\lambda_n}^{p^*_{\beta}-1}(u_n-c_n U_{\lambda_n})\mathrm{d}x
    =\int_{\mathbb{R}^N}|\nabla U_{\lambda_n}|^{p-2}\nabla U_{\lambda_n}\cdot \nabla (u_n-c_n U_{\lambda_n}) \mathrm{d}x
    =0.
    \end{equation*}
    Let
    \begin{equation}\label{defunwn}
    u_n=c_n U_{\lambda_n}+d_n w_n,
    \end{equation}
     then $w_n$ is perpendicular to $T_{c_n U_{\lambda_n}}\mathcal{M}_\beta$, we have
    \begin{equation*}
    \|w_n\|=1\quad \mbox{and}\quad \int_{\mathbb{R}^N} |\nabla U_{\lambda_n}|^{p-2}\nabla U_{\lambda_n}\cdot \nabla w_n \mathrm{d}x=0.
    \end{equation*}
    From Lemma \ref{lemui1p}, for any $\kappa>0$, there exists a constant $\mathcal{C}_1=\mathcal{C}_1(p,\kappa)>0$ such that
    \begin{small}
    \begin{align}\label{epknug}
    \|u_n\|^p
    = & \int_{\mathbb{R}^N}|c_n \nabla U_{\lambda_n}+d_n \nabla w_n|^p \mathrm{d}x\nonumber\\
    \geq & |c_n|^{p}\int_{\mathbb{R}^N}|\nabla U_{\lambda_n}|^p \mathrm{d}x
    +p|c_n|^{p-2}c_nd_n \int_{\mathbb{R}^N}|\nabla U_{\lambda_n}|^{p-2}  \nabla U_{\lambda_n}\cdot \nabla w_n \mathrm{d}x  \nonumber\\
    & +\mathcal{C}_1d_n^{p}\int_{\mathbb{R}^N}|\nabla w_n|^{p} \mathrm{d}x
    +\frac{(1-\kappa)p}{2} |c_n|^{p-2}d_n^2\int_{\mathbb{R}^N}|\nabla U_{\lambda_n}|^{p-2}  |\nabla w_n|^2\mathrm{d}x \nonumber\\
    & +\frac{(1-\kappa)p(p-2)}{2}\int_{\mathbb{R}^N}|\omega(c_n \nabla U_{\lambda_n},\nabla u_n)|^{p-2}(|c_n \nabla U_{\lambda_n}|-|\nabla u_n|)^2  \mathrm{d}x \nonumber\\
    = & |c_n|^{p}\|U\|^p+ \mathcal{C}_1d_n^{p}
     +\frac{(1-\kappa)p}{2} |c_n|^{p-2}d_n^2\int_{\mathbb{R}^N}|\nabla U_{\lambda_n}|^{p-2}  |\nabla w_n|^2\mathrm{d}x \nonumber\\
    & +\frac{(1-\kappa)p(p-2)}{2}\int_{\mathbb{R}^N}|\omega(c_n \nabla U_{\lambda_n},\nabla u_n)|^{p-2}(|c_n \nabla U_{\lambda_n}|-|\nabla u_n|)^2  \mathrm{d}x,
    \end{align}
    \end{small}
    where $\omega:\mathbb{R}^{2N}\to \mathbb{R}^N$ corresponds to $c_n \nabla U_{\lambda_n}$ and $u_n$ as in Lemma \ref{lemui1p}, since
    \begin{equation*}
    \int_{\mathbb{R}^N}|x|^{-\beta}U_{\lambda_n}^{p^*_{\beta}-1}w_n \mathrm{d}x=\int_{\mathbb{R}^N}|\nabla U_{\lambda_n}|^{p-2}\nabla U_{\lambda_n}\cdot \nabla w_n \mathrm{d}x=0,
    \end{equation*}
    and
    \begin{equation*}
    \int_{\mathbb{R}^N}|x|^{-\beta}U_{\lambda_n}^{p^*_{\beta}} \mathrm{d}x=\int_{\mathbb{R}^N}|\nabla U_{\lambda_n}|^{p}\mathrm{d}x=\|U\|^p.
    \end{equation*}
    Then from Lemma \ref{lemui1p*l}, for any $\kappa>0$, there exists a constant $\mathcal{C}_2=\mathcal{C}_2(p^*_{\alpha,\beta},\kappa)>0$ such that
    \begin{align*}
    \|u_n\|^{p^*_{\beta}}_*= & \int_{\mathbb{R}^N}|x|^{-\beta}|c_n U_{\lambda_n}+d_nw_n|^{p^*_{\beta}}  \mathrm{d}x\\
    \leq & |c_n|^{p^*_{\beta}}
    \int_{\mathbb{R}^N}|x|^{-\beta}U_{\lambda_n}^{p^*_{\beta}} \mathrm{d}x
    +|c_n|^{p^*_{\beta}-2}c_n p^*_{\beta} d_n \int_{\mathbb{R}^N}|x|^{-\beta}U_{\lambda_n}^{p^*_{\beta}-1}w_n \mathrm{d}x  \\
    & +\left(\frac{p^*_{\beta}(p^*_{\beta}-1)}{2}+\kappa\right)
    |c_n|^{p^*_{\beta}-2} d_n^2
    \int_{\mathbb{R}^N}|x|^{-\beta}U_{\lambda_n}^{p^*_{\beta}-2}w_n^2 \mathrm{d}x \\
    & +\mathcal{C}_2d_n^{p^*_{\beta}}
    \int_{\mathbb{R}^N}|x|^{-\beta}|w_n|^{p^*_{\beta}} \mathrm{d}x \\
    = & |c_n|^{p^*_{\beta}}\|U\|^p
    +\left(\frac{p^*_{\beta}(p^*_{\beta}-1)}{2}+\kappa\right)
    |c_n|^{p^*_{\beta}-2} d_n^2
    \int_{\mathbb{R}^N}|x|^{\beta}
    U_{\lambda_n}^{p^*_{\alpha,\beta}-2}w_n^2 \mathrm{d}x
    + o(d_n^p),
    \end{align*}
    since $p<p^*_{\beta}$.
    Thus, by the concavity of $t\mapsto t^{\frac{p}{p^*_{\beta}}}$, we have
    \begin{align}\label{epkeyiyxbb}
    \|u_n\|^p_*
    \leq &  |c_n|^p\|U\|^{\frac{p^2}{p^*_{\beta}}}
    + \frac{p|c_n|^{p^*_{\beta}-2} d_n^2}{p^*_{\beta}}
    \left(\frac{p^*_{\beta}(p^*_{\beta}-1)}{2}+\kappa\right)
    \|U\|^{\frac{p^2}{p^*_{\beta}}-p}
    \int_{\mathbb{R}^N}|x|^{-\beta}U_{\lambda_n}^{p^*_{\beta}-2}w_n^2
    \mathrm{d}x
    \nonumber\\ & +o(d_n^p).
    \end{align}
    Therefore, as $d_n\to 0$, combining \eqref{epknug} with \eqref{epkeyiyxbb} , it follows from Lemma \ref{lemsgap} that, by choosing $\kappa>0$ small enough,
    \begin{small}
    \begin{align*}
    \|u_n\|^p
    -\mathcal{S}_\beta\|u_n\|_*^p
    \geq & |c_n|^{p}\|U\|^p+ \mathcal{C}_1d_n^{p}
    +\frac{(1-\kappa)p}{2} d_n^2\int_{\mathbb{R}^N}|\nabla c_n U_{\lambda_n}|^{p-2}  |\nabla w_n|^2\mathrm{d}x \\
    & +\frac{(1-\kappa)p(p-2)}{2}\int_{\mathbb{R}^N}|\omega(c_n \nabla U_{\lambda_n},\nabla u_n)|^{p-2}(|c_n \nabla U_{\lambda_n}|-|\nabla u_n|)^2 \mathrm{d}x \\
    & -\mathcal{S}_\beta\Bigg\{|c_n|^p\|U\|^{\frac{p^2}{p^*_{\beta}}} +o(d_n^p) \\
    & + \frac{p|c_n|^{p^*_{\beta}-2} d_n^2}{p^*_{\beta}}
    \left(\frac{p^*_{\beta}(p^*_{\beta}-1)}{2}+\kappa\right)
    \|U\|^{\frac{p^2}{p^*_{\beta}}-p}
    \int_{\mathbb{R}^N}|x|^{-\beta}U_{\lambda_n}^{p^*_{\beta}-2}w_n^2 \mathrm{d}x\Bigg\} \\
    \geq  & \mathcal{C}_1d_n^p -o(d_n^p)  \\
    & + \frac{(1-\kappa)p  |c_n|^{p^*_{\beta}-2} d_n^2}{2}\left[(p^*_{\beta}-1)+\tau\right]
    \int_{\mathbb{R}^N}|x|^{-\beta}U_{\lambda_n}^{p^*_{\beta}-2}w_n^2 \mathrm{d}x\\
    & - \frac{p|c_n|^{p^*_{\beta}-2} d_n^2}{p^*_{\beta}}
    \left(\frac{p^*_{\beta}(p^*_{\beta}-1)}{2}+\kappa\right)
    \mathcal{S}_\beta\|U\|^{\frac{p^2}{p^*_{\beta}}-p}
    \int_{\mathbb{R}^N}|x|^{-\beta}U_{\lambda_n}^{p^*_{\beta}-2}w_n^2 \mathrm{d}x \\
    \geq  & \mathcal{C}_1d_n^p -o(d_n^p),
    \end{align*}
    \end{small}
    since $\|U\|^p=\mathcal{S}_\beta^{\frac{p^*_{\beta}}{p^*_{\beta}-p}}$, then (\ref{rtnmb}) follows immediately.
    \end{proof}

    \begin{lemma}\label{lemma:rtnm2b2}
    Suppose $1<p<2\leq N$  and $0<\beta<p$.
    There exists a small constant $\rho_2>0$ such that for any sequence $\{u_n\}\subset \mathcal{D}^{1,p}_{0}(\mathbb{R}^N)\backslash \mathcal{M}_\beta$ satisfying $\inf_n\|u_n\|>0$ and $d_n\to 0$, it holds that
    \begin{equation}\label{rtnmb2}
    \liminf_{n\to\infty}
    \frac{\|u_n\|^p
    -\mathcal{S}_\beta\|u_n\|_*^p}
    {d_n^2}
    \geq \rho_2.
    \end{equation}
    \end{lemma}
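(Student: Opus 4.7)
The plan is to mirror the proof of Lemma~\ref{lemma:rtnm2b}, replacing each ingredient by its sub-quadratic $(1<p<2)$ counterpart; the natural denominator becomes $d_n^2$ rather than $d_n^p$, which is consistent with $d_n\to 0$ since $d_n^2\geq d_n^p$ in this regime. First I would verify that $d_n$ is attained by some pair $(c_n,\lambda_n)$: using the crude bound $\|u_n-cU_\lambda\|\geq\|u_n\|-|c|\|U\|$ to control $|c|$, together with the vanishing of $\int|\nabla u_n|^{p-2}\nabla u_n\cdot\nabla U_\lambda\,\mathrm{d}x$ as $\lambda\to 0^+$ or $\lambda\to+\infty$ (the argument in the excerpt is $p$-independent), any minimising sequence $(c_{n,m},\lambda_{n,m})$ has $|c_{n,m}|$ bounded and $\lambda_{n,m}$ bounded away from $0$ and $\infty$. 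Writing $u_n=c_n U_{\lambda_n}+d_n w_n$ with $\|w_n\|=1$, the Euler--Lagrange conditions for the minimum yield
\[
\int_{\mathbb{R}^N}|\nabla U_{\lambda_n}|^{p-2}\nabla U_{\lambda_n}\cdot\nabla w_n\,\mathrm{d}x=0,\qquad \int_{\mathbb{R}^N}|\nabla U_{\lambda_n}|^{p-2}\nabla(\partial_\lambda U_\lambda)\big|_{\lambda=\lambda_n}\cdot\nabla w_n\,\mathrm{d}x=0,
\]
which combined with Proposition~\ref{propev} encode the orthogonality of $w_n$ to $T_{c_nU_{\lambda_n}}\mathcal{M}_\beta$ in $L^2_{\beta,*}(\mathbb{R}^N)$.

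I would then expand $\|u_n\|^p$ from below via the $1<p<2$ version of Lemma~\ref{lemui1p} applied to $c_n\nabla U_{\lambda_n}$ and $\nabla u_n$: the orthogonality kills the linear term, leaving
\begin{align*}
\|u_n\|^p
\geq\;&|c_n|^p\|U\|^p
+\tfrac{(1-\kappa)p}{2}|c_n|^{p-2}d_n^2
\int_{\mathbb{R}^N}|\nabla U_{\lambda_n}|^{p-2}|\nabla w_n|^2\,\mathrm{d}x\\
&+\tfrac{(1-\kappa)p(p-2)}{2}\int_{\mathbb{R}^N}|\omega_n|^{p-2}
\bigl(|\nabla u_n|-|c_n\nabla U_{\lambda_n}|\bigr)^2\mathrm{d}x\\
&+\mathcal{C}_1\, d_n^2\int_{\mathbb{R}^N}
\min\bigl\{d_n^{p-2}|\nabla w_n|^p,\,|\nabla U_{\lambda_n}|^{p-2}|\nabla w_n|^2\bigr\}\mathrm{d}x.
\end{align*}
For $\|u_n\|_*^{p^*_\beta}$ I would invoke Lemma~\ref{lemui1p*l}, observing a critical subtlety: when $1<p\leq\frac{2N}{N+2-\beta}$ one has $p^*_\beta\leq 2$ and the quadratic remainder is not of the clean form $d_n^2\int|x|^{-\beta}U_{\lambda_n}^{p^*_\beta-2}w_n^2$, but of the ratio form
\[
d_n^2\int_{\mathbb{R}^N}|x|^{-\beta}\frac{(U_{\lambda_n}+C_1 d_n|w_n|)^{p^*_\beta}}{U_{\lambda_n}^2+d_n^2 w_n^2}|w_n|^2\,\mathrm{d}x,
\]
which is exactly the quantity dominated by Lemma~\ref{lemsgap2}(i). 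Taking the $p/p^*_\beta$-th power by concavity of $t\mapsto t^{p/p^*_\beta}$, every higher-order-in-$d_n$ contribution becomes $o(d_n^2)$.

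Subtracting the two expansions, the leading term $|c_n|^p\|U\|^p$ cancels thanks to $\mathcal{S}_\beta\|U\|^{p^2/p^*_\beta-p}=1$, and one arrives at
\[
\|u_n\|^p-\mathcal{S}_\beta\|u_n\|_*^p
\geq |c_n|^{p-2}d_n^2\Bigl[\mathcal{Q}(w_n)
-\bigl[(p^*_\beta-1)+\kappa'\bigr]\mathcal{N}(d_nw_n)\Bigr]+o(d_n^2),
\]
where $\mathcal{Q}(w_n)$ is the quadratic form appearing on the left-hand side of Lemma~\ref{lemsgap2} (including the $\min$-term with a prefactor $\gamma_0$ fixed once and for all) and $\mathcal{N}(d_nw_n)$ is its right-hand-side weighted integral in case (i) or (ii). Since $d_n w_n\to 0$ in $\mathcal{D}^{1,p}_0(\mathbb{R}^N)$ and $w_n$ is orthogonal to $T_{U_{\lambda_n}}\mathcal{M}_\beta$, after rescaling by $\lambda_n$ the hypotheses of Lemma~\ref{lemsgap2} are satisfied for $n$ large, producing the strict gap $\tau>0$; choosing $\kappa,\kappa'$ small absorbs the perturbative errors and leaves a positive multiple of $\mathcal{N}(d_nw_n)\geq 0$. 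Since $|c_n|$ is bounded away from $0$ (as $\inf_n\|u_n\|>0$ and $d_n\to 0$), \eqref{rtnmb2} follows. The main obstacle is precisely the case $p^*_\beta<2$: neither the Taylor-type upper expansion for $\|u_n\|_*^{p^*_\beta}$ nor the spectral estimate reduces to a standard weighted $L^2$ norm, and the argument closes only because Lemma~\ref{lemui1p*l} and Lemma~\ref{lemsgap2}(i) share precisely the same ratio-type quadratic quantity, an alignment that rests on the refined Orlicz-type Poincar\'e inequality of Corollary~\ref{propcetlpi}.
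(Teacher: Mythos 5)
Your proposal follows the paper's route (attainment of $d_n$, the decomposition $u_n=c_nU_{\lambda_n}+d_nw_n$ with $\|w_n\|=1$, the lower/upper expansions via Lemmas \ref{lemui1p} and \ref{lemui1p*l}, concavity of $t\mapsto t^{p/p^*_{\beta}}$, cancellation of the leading term via $\mathcal{S}_\beta\|U\|^{p^2/p^*_\beta-p}=1$, and absorption of the negative term through Lemma \ref{lemsgap2}), but the final step has a genuine gap. After the spectral-gap absorption you conclude that what is left is ``a positive multiple of $\mathcal{N}(d_nw_n)\geq 0$'' and that \eqref{rtnmb2} follows because $|c_n|$ is bounded away from $0$. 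It does not: $\mathcal{N}(d_nw_n)$ is a weighted $L^2_{\beta,*}$-type quantity for which there is no uniform positive lower bound under the sole normalization $\int_{\mathbb{R}^N}|\nabla w_n|^p\,\mathrm{d}x=1$. If, for instance, the mass of $w_n$ escapes to infinity or to vanishing scales, then $\int_{\mathbb{R}^N}|x|^{-\beta}U_{\lambda_n}^{p^*_\beta-2}w_n^2\,\mathrm{d}x\to 0$ (and likewise for the ratio-form quantity in case $p^*_\beta\leq 2$), so your estimate degenerates to $o(d_n^2)$ and only yields $\liminf\geq 0$, not $\liminf\geq\rho_2>0$.

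The paper closes the argument precisely by \emph{not} spending the whole coercive remainder of Lemma \ref{lemui1p} in the absorption: the term $\mathcal{C}_2 d_n^{2}\int_{\mathbb{R}^N}\min\{d_n^{p-2}|\nabla w_n|^{p},\,|c_n\nabla U_{\lambda_n}|^{p-2}|\nabla w_n|^{2}\}\,\mathrm{d}x$ is retained, $\gamma_0$ is chosen so small that only a fraction of it is needed to invoke Lemma \ref{lemsgap2}, and the surviving part is then shown to be bounded below by a universal constant $c>0$ via H\"older's inequality on the sets $\{d_n|\nabla w_n|<|c_n\nabla U_{\lambda_n}|\}$ and its complement together with $\|w_n\|=1$ (this is the step \eqref{emn21b}); this is what produces $\|u_n\|^p-\mathcal{S}_\beta\|u_n\|_*^p\geq c\,d_n^2-o(d_n^2)$ and hence \eqref{rtnmb2}. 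In your write-up the $\min$-term is folded into $\mathcal{Q}$ with prefactor $\gamma_0$ and is entirely consumed by the absorption, so no quantitatively coercive term of order $d_n^2$ survives; you need to keep it and prove the uniform lower bound on $\int\min\{\cdot,\cdot\}$. Two minor further points: for $d_n\in(0,1)$ and $p<2$ one has $d_n^2\leq d_n^p$, not the reverse, so the $d_n^2$-denominator statement is the weaker one (consistent with $\gamma=\max\{2,p\}=2$); and the weighted orthogonality of $w_n$ to $T_{c_nU_{\lambda_n}}\mathcal{M}_\beta$ is asserted rather than derived from the Euler--Lagrange conditions of the $\mathcal{D}^{1,p}$-minimization (whose natural weight is $|\nabla(u_n-c_nU_{\lambda_n})|^{p-2}$, not $|\nabla U_{\lambda_n}|^{p-2}$) --- the paper does the same, but citing Proposition \ref{propev} does not supply this step.
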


    \begin{proof}
    We know that for each $u_n\in \mathcal{D}^{1,p}_{0}(\mathbb{R}^N)$, there exist $c_n\in\mathbb{R}$ and $\lambda_n>0$ such that $d_n=\|u_n-c_nU_{\lambda_n}\|$.
    In fact, since $1< p<2$, for each fixed $n$, from Lemma \ref{lemui1p}, we obtain that for any $0<\kappa<1$, there exists a constant $\mathcal{C}_1=\mathcal{C}_1(p,\kappa)>0$ such that
    \begin{small}
    \begin{align}\label{ikeda2}
    \|u_n-cU_\lambda\|^p
    = & \int_{\mathbb{R}^N}|\nabla u_n-c\nabla U_\lambda|^p \mathrm{d}x\nonumber\\
    \geq & \int_{\mathbb{R}^N}|\nabla u_n|^p dx
    -pc\int_{\mathbb{R}^N}|\nabla u_n|^{p-2}  \nabla u_n\cdot \nabla U_\lambda \mathrm{d}x  \nonumber\\ &
    +\mathcal{C}_1|c|^{2}\int_{\mathbb{R}^N}\min\{|c|^{p-2}|\nabla U_\lambda|^p,|\nabla u_n|^{p-2}|\nabla U_\lambda|\} \mathrm{d}x
    \nonumber\\ &
    +\frac{(1-\kappa)p}{2}c^2 \int_{\mathbb{R}^N}|\nabla u_n|^{p-2}  |\nabla U_\lambda|^2\mathrm{d}x \nonumber\\
    & +\frac{(1-\kappa)p(p-2)}{2}\int_{\mathbb{R}^N}
    |\widetilde{\omega}(\nabla u_n, \nabla u_n-c \nabla U_{\lambda})|^{p-2}(|\nabla u_n|-|\nabla u_n-c \nabla U_{\lambda}|)^2  \mathrm{d}x \nonumber\\
    \geq & \int_{\mathbb{R}^N}|\nabla u_n|^p dx
    -pc\int_{\mathbb{R}^N}|\nabla u_n|^{p-2}  \nabla u_n\cdot \nabla U_\lambda \mathrm{d}x  \nonumber\\ &
    +\mathcal{C}_1|c|^{2}\int_{\mathbb{R}^N}\min\{|c|^{p-2}|\nabla U_\lambda|^p,|\nabla u_n|^{p-2}|\nabla U_\lambda|\} \mathrm{d}x
    \nonumber\\
    \geq & \|u_n\|^p-p|c|\|U\|\|u_n\|^{p-1}\nonumber\\ &
    +\mathcal{C}_1|c|^{2}\int_{\mathbb{R}^N}\min\{|c|^{p-2}|\nabla U_\lambda|^p,|\nabla u_n|^{p-2}|\nabla U_\lambda|\} \mathrm{d}x,
    \end{align}
    \end{small}
    where $\widetilde{\omega}:\mathbb{R}^{2N}\to \mathbb{R}^N$ corresponds to $\nabla u_n$ and $\nabla u_n-c \nabla U_{\lambda}$ the same as $\omega$ in Lemma \ref{lemui1p} for the case $1<p<2$,  since from \eqref{evbcb2i} it holds that
    \begin{align*}
    0\leq & c^2 \int_{\mathbb{R}^N}|\nabla u_n|^{p-2}  |\nabla U_\lambda|^2\mathrm{d}x
    \nonumber\\ & + (p-2)\int_{\mathbb{R}^N}|\widetilde{\omega}(\nabla u_n, \nabla u_n-c \nabla U_{\lambda})|^{p-2}(|\nabla u_n|-|\nabla u_n-c \nabla U_{\lambda}|)^2  \mathrm{d}x.
    \end{align*}
    Therefore the minimizing sequence of $d_n$, say $\{c_{n,m},\lambda_{n,m}\}$, must satisfying $|c_{n,m}|\leq C$ for some $C>0$ independent of $m$,  which means $\{c_{n,m}\}$ is bounded.
    On the other hand, taking the same steps as those in Lemma \ref{lemma:rtnm2b}, we deduce that
    \[
    \left|\int_{\mathbb{R}^N}|\nabla u_n|^{p-2}  \nabla u_n\cdot \nabla U_\lambda \mathrm{d}x\right|
    \to 0\quad \mbox{as}\quad \lambda\to 0,
    \]
    and
    \[
    \left|\int_{\mathbb{R}^N}|\nabla u_n|^{p-2}  \nabla u_n\cdot \nabla U_\lambda \mathrm{d}x\right|
    \to 0\quad \mbox{as}\quad \lambda\to +\infty.
    \]
    It follows from (\ref{ikeda}) and $d_n\to 0$, $\inf_n\|u_n\|>0$ that the minimizing sequence $\{c_{n,m},\lambda_{n,m}\}$ must satisfying $1/C\leq |\lambda_{n,m}|\leq C$ for some $C>1$ independent of $m$,  which means $\{\lambda_{n,m}\}$ is bounded. Thus for each $u_n\in  \mathcal{D}^{1,p}_{0}(\mathbb{R}^N)\backslash \mathcal{M}_\beta$, $d_n$ can also be attained by some $c_n\in\mathbb{R}$ and $\lambda_n>0$.

    As stated in Lemma \ref{lemma:rtnm2b}, we have
    \[
    T_{c_n U_{\lambda_n}}\mathcal{M}_\beta={\rm Span}\left\{U_{\lambda_n}, \ \frac{\partial U_\lambda}{\partial\lambda}\Big|_{\lambda=\lambda_n}\right\},
    \]
    and
    \[
    \int_{\mathbb{R}^N}|x|^{-\beta}U_{\lambda_n}^{p^*_{\alpha,\beta}-2}(u_n-c_n U_{\lambda_n})\xi \mathrm{d}x=0,\quad \forall \xi \in T_{c_n U_{\lambda_n}}\mathcal{M}_\beta,
    \]
    particularly, taking $\xi=U_{\lambda_n}$ we obtain
    \begin{equation*}
    \int_{\mathbb{R}^N}|x|^{-\beta}U_{\lambda_n}^{p^*_{\beta}-1}(u_n-c_n U_{\lambda_n})\mathrm{d}x
    =\int_{\mathbb{R}^N}|\nabla U_{\lambda_n}|^{p-2}\nabla U_{\lambda_n}\cdot \nabla (u_n-c_n U_{\lambda_n}) \mathrm{d}x
    =0.
    \end{equation*}
    Let
    \begin{equation}\label{defunwn2}
    u_n=c_n U_{\lambda_n}+d_n w_n,
    \end{equation}
     then $w_n$ is perpendicular to $T_{c_n U_{\lambda_n}}\mathcal{M}_\beta$, we have
    \begin{equation*}
    \|w_n\|=1\quad \mbox{and}\quad \int_{\mathbb{R}^N} |\nabla U_{\lambda_n}|^{p-2}\nabla U_{\lambda_n}\cdot \nabla w_n \mathrm{d}x=0.
    \end{equation*}
    Since $1<p<2$, from Lemma \ref{lemui1p} we obtain that for any $\kappa>0$, there exists a constant $\mathcal{C}_2=\mathcal{C}_2(p,\kappa)>0$ such that
    \begin{align}\label{epknug2}
    \|u_n\|^p
    = & \int_{\mathbb{R}^N}|c_n \nabla U_{\lambda_n}+d_n \nabla w_n|^p \mathrm{d}x \nonumber\\
    \geq & |c_n|^{p}\int_{\mathbb{R}^N}|\nabla U_{\lambda_n}|^p \mathrm{d}x
    +p|c_n|^{p-2}c_nd_n \int_{\mathbb{R}^N}|\nabla U_{\lambda_n}|^{p-2}  \nabla U_{\lambda_n}\cdot \nabla w_n \mathrm{d}x  \nonumber\\
    & +\frac{(1-\kappa)p}{2} |c_n|^{p-2}d_n^2\int_{\mathbb{R}^N}|\nabla U_{\lambda_n}|^{p-2}  |\nabla w_n|^2\mathrm{d}x \nonumber\\
    & +\frac{(1-\kappa)p(p-2)}{2}\int_{\mathbb{R}^N}|\omega(c_n \nabla U_{\lambda_n},\nabla u_n)|^{p-2}(|c_n \nabla U_{\lambda_n}|-|\nabla u_n|)^2  \mathrm{d}x \nonumber\\
    & +\mathcal{C}_2d_n^{2} \int_{\mathbb{R}^N}\min\{d_n^{p-2}|\nabla w_n|^{p}, |c_n \nabla U_{\lambda_n}|^{p-2}|\nabla w_n|^{2}\} \mathrm{d}x  \nonumber\\
    = & |c_n|^{p}\|U\|^p
    +\frac{(1-\kappa)p}{2} |c_n|^{p-2}d_n^2\int_{\mathbb{R}^N}|\nabla U_{\lambda_n}|^{p-2}  |\nabla w_n|^2\mathrm{d}x \nonumber\\
    & +\frac{(1-\kappa)p(p-2)}{2}\int_{\mathbb{R}^N}|\omega(c_n \nabla U_{\lambda_n},\nabla u_n)|^{p-2}(|c_n \nabla U_{\lambda_n}|-|\nabla u_n|)^2  \mathrm{d}x\nonumber\\
    & +\mathcal{C}_2d_n^{2} \int_{\mathbb{R}^N}\min\{d_n^{p-2}|\nabla w_n|^{p}, |c_n \nabla U_{\lambda_n}|^{p-2}|\nabla w_n|^{2}\} \mathrm{d}x,
    \end{align}
    where $\omega:\mathbb{R}^{2N}\to \mathbb{R}^N$ corresponds to $c_n \nabla U_{\lambda_n}$ and $\nabla u_n$ as in Lemma \ref{lemui1p}. Then we consider the following two cases:

    $\bullet$ {\em The case $1<p\leq\frac{2N}{N+2-\beta}$} which implies $p^*_{\beta}\leq2$.

    From Lemma \ref{lemui1p*l}, for any $\kappa>0$ and $C_1>0$, there exists a constant $\mathcal{C}_1=\mathcal{C}_1(p^*_{\beta},\kappa,C_1)>0$ such that
    \begin{align*}
    \|u_n\|^{p^*_\beta}_*
    = & \int_{\mathbb{R}^N}|x|^{-\beta}|c_n U_{\lambda_n}+d_nw_n|^{p^*_{\beta}}  \mathrm{d}x\\
    \leq & |c_n|^{p^*_{\beta}}
    \int_{\mathbb{R}^N}|x|^{-\beta}U_{\lambda_n}^{p^*_{\beta}} \mathrm{d}x
    +p^*_{\alpha} |c_n|^{p^*_{\beta}-2}c_n d_n \int_{\mathbb{R}^N}|x|^{-\beta}U_{\lambda_n}^{p^*_{\beta}-1}w_n \mathrm{d}x  \\
    & +\left(\frac{p^*_{\beta}(p^*_{\beta}-1)}{2}+\kappa\right)d_n^2
    \int_{\mathbb{R}^N}|x|^{-\beta}\frac{(|c_n U_{\lambda_n}|+\mathcal{C}_1|d_nw_n|)^{p^*_{\beta}}}{|c_n U_{\lambda_n}|^2+|d_nw_n|^2}w_n^2 \mathrm{d}x \\
    = & |c_n|^{p^*_{\beta}}\|U\|^p
    +\left(\frac{p^*_{\beta}(p^*_{\beta}-1)}{2}+\kappa\right)d_n^2
    \int_{\mathbb{R}^N}|x|^{-\beta}\frac{(|c_n U_{\lambda_n}|+\mathcal{C}_1|d_nw_n|)^{p^*_{\beta}}}{|c_n U_{\lambda_n}|^2+|d_nw_n|^2}w_n^2 \mathrm{d}x,
    \end{align*}
    since
    \begin{equation*}
    \int_{\mathbb{R}^N}|x|^{-\beta}U_{\lambda_n}^{p^*_{\beta}-1}w_n \mathrm{d}x=\int_{\mathbb{R}^N}|\nabla U_{\lambda_n}|^{p-2}\nabla U_{\lambda_n}\cdot \nabla w_n \mathrm{d}x
    =0,
    \end{equation*}
    and
    \begin{equation*}
    \int_{\mathbb{R}^N}|x|^{-\beta}U_{\lambda_n}^{p^*_{\beta}} \mathrm{d}x
    =\int_{\mathbb{R}^N}|\nabla U_{\lambda_n}|^{p}\mathrm{d}x
    =\|U\|^p.
    \end{equation*}
    Thus, by the concavity of $t\mapsto t^{\frac{p}{p^*_{\beta}}}$, we have
    \begin{align}\label{epkeyiyxbb2}
    \|u_n\|^p_*
    \leq  &  |c_n|^p\|U\|^{\frac{p^2}{p^*_{\beta}}}
    \nonumber\\ & +\frac{p}{p^*_{\beta}}
    \left(\frac{p^*_{\beta}(p^*_{\beta}-1)}{2}+\kappa\right)d_n^2
    \|U\|^{\frac{p^2}{p^*_{\beta}}-p}
    \int_{\mathbb{R}^N}|x|^{-\beta}\frac{(|c_n U_{\lambda_n}|+\mathcal{C}_1|d_nw_n|)^{p^*_{\beta}}}{|c_n U_{\lambda_n}|^2+|d_nw_n|^2}w_n^2 \mathrm{d}x.
    \end{align}
    Therefore, as $d_n\to 0$, combining \eqref{epkeyiyxbb2} with \eqref{epknug2}, it follows from Lemma \ref{lemsgap} that, by choosing $\kappa>0$ small enough,
    \begin{small}
    \begin{align*}
    \|u_n\|^p
    -\mathcal{S}_\beta\|u_n\|_*^p
    \geq & |c_n|^{p}\|U\|^p
    +\frac{(1-\kappa)p}{2}d_n^2\int_{\mathbb{R}^N}|\nabla c_n U_{\lambda_n}|^{p-2}  |\nabla w_n|^2\mathrm{d}x \\
    & +\frac{(1-\kappa)p(p-2)}{2}\int_{\mathbb{R}^N}|\omega(c_n \nabla U_{\lambda_n},\nabla u_n)|^{p-2}(|c_n \nabla U_{\lambda_n}|-|\nabla u_n|)^2 \mathrm{d}x\\
    & +\mathcal{C}_2d_n^{2} \int_{\mathbb{R}^N}\min\{d_n^{p-2}|\nabla w_n|^{p}, |c_n \nabla U_{\lambda_n}|^{p-2}|\nabla w_n|^{2}\}
    \mathrm{d}x \\
    & -\mathcal{S}_\beta\Bigg\{|c_n|^p\|U\|^{\frac{p^2}{p^*_{\alpha,\beta}}}
    \\ & \quad \quad +\left(\frac{p(p^*_{\beta}-1)}{2}
    +\frac{p\kappa}{p^*_{\beta}}\right)d_n^2
    \|U\|^{\frac{p^2}{p^*_{\beta}}-p}
    \int_{\mathbb{R}^N}|x|^{-\beta}\frac{(|c_n U_{\lambda_n}|+\mathcal{C}_1|d_nw_n|)^{p^*_{\beta}}}{|c_n U_{\lambda_n}|^2+|d_nw_n|^2}w_n^2 \mathrm{d}x\Bigg\} \\
    \geq  & \frac{(1-\kappa)p}{2} d_n^2\int_{\mathbb{R}^N} |\nabla c_nU_{\lambda_n}|^{p-2}  |\nabla w_n|^2\mathrm{d}x
    \\ & +\frac{(1-\kappa)p(p-2)}{2}\int_{\mathbb{R}^N}|\omega(c_n \nabla U_{\lambda_n},\nabla u_n)|^{p-2}(|c_n \nabla U_{\lambda_n}|-|\nabla u_n|)^2  \mathrm{d}x\\
    & +\mathcal{C}_2d_n^{2} \int_{\mathbb{R}^N}\min\{d_n^{p-2}|\nabla w_n|^{p}, |c_n \nabla U_{\lambda_n}|^{p-2}|\nabla w_n|^{2}\} \mathrm{d}x
    \\ & -\left(\frac{p(p^*_{\beta}-1)}{2}
    +\frac{p\kappa}{p^*_{\beta}}\right)d_n^2
    \int_{\mathbb{R}^N}|x|^{-\beta}\frac{(|c_n U_{\lambda_n}|+\mathcal{C}_1|d_nw_n|)^{p^*_{\beta}}}{|c_n U_{\lambda_n}|^2+|d_nw_n|^2}w_n^2 \mathrm{d}x,
    \end{align*}
    \end{small}
    since $\|U\|^p=\mathcal{S}_\beta^{\frac{p^*_{\beta}}{p^*_{\beta}-p}}$. Lemma \ref{lemsgap2} allows us to reabsorb the last term above: more precisely, we have
    \begin{small}
    \begin{align*}
    & \|u_n\|^p
    -\mathcal{S}_\beta\|u_n\|_*^p \\
    \geq & pd_n^2\left(\frac{(1-\kappa)}{2} - \frac{(p^*_{\beta}-1)+\frac{2}{p^*_{\beta}}\kappa}
    {2(p^*_{\beta}-1)
    +2\tau }\right) \\
    & \quad \times
    \int_{\mathbb{R}^N}\left[|\nabla c_nU_{\lambda_n}|^{p-2}  |\nabla w_n|^2
    +(p-2)|\omega(c_n \nabla U_{\lambda_n},\nabla u_n)|^{p-2}\left(\frac{|c_n \nabla U_{\lambda_n}|-|\nabla u_n|}{d_n}\right)^2\right]\mathrm{d}x \\
    & + d_n^{2}\left(\mathcal{C}_2 - \gamma_0\frac{p\left[(p^*_{\beta}-1)
    +\frac{2}{p^*_{\beta}}\kappa\right]}
    {2(p^*_{\beta}-1)+2\tau}\right)
    \int_{\mathbb{R}^N}\min\{d_n^{p-2}|\nabla w_n|^{p}, |c_n \nabla U_{\lambda_n}|^{p-2}|\nabla w_n|^{2}\} \mathrm{d}x.
    \end{align*}
    \end{small}
    Now, let us recall the definition of $\omega$, as stated in Lemma \ref{lemui1p}, we have
    \[
    |\nabla c_nU_{\lambda_n}|^{p-2}  |\nabla w_n|^2
    +(p-2)|\omega(c_n \nabla U_{\lambda_n},\nabla u_n)|^{p-2}\left(\frac{|c_n \nabla U_{\lambda_n}|-|\nabla u_n|}{d_n}\right)^2\geq 0,
    \]
    then choosing $\kappa>0$ small enough such that
    \[
    \frac{(1-\kappa)}{2} - \frac{(p^*_{\alpha,\beta}-1)+\frac{2}{p^*_{\alpha,\beta}}\kappa}{2(p^*_{\alpha,\beta}-1)
    +2\tau }\geq 0,
    \]
    and then $\gamma_0>0$ small enough such that
    \[
    \frac{\mathcal{C}_2}{2}\geq \gamma_0\frac{p\left[(p^*_{\beta}-1)
    +\frac{2}{p^*_{\beta}}\kappa\right]}
    {2(p^*_{\beta}-1)+2\tau},
    \]
    we eventually arrive at
    \begin{align}\label{emn21}
    & \|u_n\|^p
    -\mathcal{S}_\beta\|u_n\|_*^p
    \geq \frac{\mathcal{C}_2}{2}d_n^{2}\int_{\mathbb{R}^N}
    \min\{d_n^{p-2}|\nabla w_n|^{p}, |c_n \nabla U_{\lambda_n}|^{p-2}|\nabla w_n|^{2}\} \mathrm{d}x.
    \end{align}
    Observe that, since $1<p<2$, it follows by H\"{o}lder inequality that
    \begin{align*}
    \left(\int_{\{d_n|\nabla w_n|\geq|c_n \nabla U_{\lambda_n}|\}}|\nabla w_n|^{p} \mathrm{d}x\right)^{\frac{2}{p}}
    \leq & \left(\int_{\{d_n|\nabla w_n|\geq|c_n \nabla U_{\lambda_n}|\}}|\nabla U_{\lambda_n}|^{p} \mathrm{d}x\right)^{\frac{2}{p}-1}  \\
    & \quad \times
    \int_{\{d_n|\nabla w_n|\geq|c_n \nabla U_{\lambda_n}|\}}|\nabla U_{\lambda_n}|^{p-2}|\nabla w_n|^{2} \mathrm{d}x  \\
    \leq & \mathcal{S}_\beta^{\frac{p^*_{\beta}(\frac{2}{p}-1)}
    {p^*_{\beta}-p}} \int_{\{d_n|\nabla w_n|\geq|c_n \nabla U_{\lambda_n}|\}}|\nabla U_{\lambda_n}|^{p-2}|\nabla w_n|^{2}
    \mathrm{d}x,
    \end{align*}
    then we obtain
    \begin{align}\label{emn21b}
    & \int_{\mathbb{R}^N}\min\{d_n^{p-2}|\nabla w_n|^{p}, |c_n \nabla U_{\lambda_n}|^{p-2}|\nabla w_n|^{2}\} \mathrm{d}x
    \nonumber\\
    = & d_n^{p-2}\int_{\{d_n|\nabla w_n|<|c_n \nabla U_{\lambda_n}|\}}|\nabla w_n|^{p} \mathrm{d}x
    +\int_{\{d_n|\nabla w_n|\geq |c_n \nabla U_{\lambda_n}|\}}|c_n \nabla U_{\lambda_n}|^{p-2}|\nabla w_n|^{2}\mathrm{d}x
    \nonumber\\
    \geq & d_n^{p-2}\int_{\{d_n|\nabla w_n|< |c_n \nabla U_{\lambda_n}|\}}|\nabla w_n|^{p} \mathrm{d}x
    + c\left(\int_{\{d_n|\nabla w_n|\geq|c_n \nabla U_{\lambda_n}|\}}|\nabla w_n|^{p} \mathrm{d}x\right)^{\frac{2}{p}}
    \nonumber\\
    \geq & c\left(\int_{\mathbb{R}^N}|\nabla w_n|^{p} \mathrm{d}x\right)^{\frac{2}{p}}=c,
    \end{align}
    for some constant $c>0$.
    The conclusion (\ref{rtnmb2}) follows immediately from \eqref{emn21} and \eqref{emn21b}.

    $\bullet$ {\em The case $\frac{2N}{N+2-\beta}< p<2$} which implies $p^*_{\beta}> 2$.

    The proof is very similar to the previous case, with very small changes. From Lemma \ref{lemui1p*l}, we have that for any $\kappa>0$, there exists a constant $\mathcal{C}_1=\mathcal{C}_1(p^*_{\beta},\kappa)>0$ such that
    \begin{align*}
    \|u_n\|_*^{p^*_\beta}
    = & \int_{\mathbb{R}^N}|x|^{-\beta}|u_n|^{p^*_{\beta}} \mathrm{d}x \\
    \leq & |c_n|^{p^*_{\beta}}
    \int_{\mathbb{R}^N}|x|^{-\beta}U_{\lambda_n}^{p^*_{\beta}} \mathrm{d}x
    +|c_n|^{p^*_{\beta}-2}c_n p^*_{\alpha} d_n \int_{\mathbb{R}^N}|x|^{-\beta}U_{\lambda_n}^{p^*_{\beta}-1}w_n \mathrm{d}x  \\
    & +\left(\frac{p^*_{\beta}(p^*_{\beta}-1)}{2}+\kappa\right)
    |c_n|^{p^*_{\beta}-2} d_n^2
    \int_{\mathbb{R}^N}|x|^{-\beta}U_{\lambda_n}^{p^*_{\beta}-2}w_n^2
    +\mathcal{C}_1d_n^{p^*_{\beta}}
    \int_{\mathbb{R}^N}|x|^{-\beta}|w_n|^{p^*_{\alpha,\beta}} \mathrm{d}x \\
    = & |c_n|^{p^*_{\beta}}\|U\|^p
    +\left(\frac{p^*_{\beta}(p^*_{\beta}-1)}{2}+\kappa\right)
    |c_n|^{p^*_{\beta}-2} d_n^2
    \int_{\mathbb{R}^N}|x|^{-\beta}
    U_{\lambda_n}^{p^*_{\beta}-2}w_n^2 \mathrm{d}x
    + o(d_n^2).
    \end{align*}
    Then by the concavity of $t\mapsto t^{\frac{p}{p^*_{\beta}}}$, we have
    \begin{align}\label{epkeyiyxbbg2}
    \|u_n\|_*^{p}
    \leq  &  |c_n|^p\|U\|^{\frac{p^2}{p^*_{\beta}}}
    + o(d_n^2)
    + \frac{p|c_n|^{p^*_{\beta}-2} d_n^2}{p^*_{\beta}}
    \left(\frac{p^*_{\beta}(p^*_{\beta}-1)}{2}+\kappa\right)
    \|U\|^{\frac{p^2}{p^*_{\beta}}-p}
    \int_{\mathbb{R}^N}|x|^{-\beta}U_{\lambda_n}^{p^*_{\beta}-2}w_n^2 \mathrm{d}x.
    \end{align}
    Hence, arguing as in the case $1<p<\frac{2N}{N+2-\beta}$,
    Therefore, as $d_n\to 0$, combining \eqref{epknug2} with \eqref{epkeyiyxbbg2}, it follows from Lemma \ref{lemsgap} that, by choosing $\kappa>0$ small enough,
    \begin{small}
    \begin{align*}
    \|u_n\|^p
    -\mathcal{S}_\beta\|u_n\|_*^p
    \geq & |c_n|^{p}\|U\|^p
    +\frac{(1-\kappa)p}{2} |c_n|^{p-2}d_n^2\int_{\mathbb{R}^N}|\nabla U_{\lambda_n}|^{p-2}  |\nabla w_n|^2\mathrm{d}x \\
    & +\frac{(1-\kappa)p(p-2)}{2}\int_{\mathbb{R}^N}|\omega(c_n \nabla U_{\lambda_n},\nabla u_n)|^{p-2}(|c_n \nabla U_{\lambda_n}|-|\nabla u_n|)^2  \mathrm{d}x\\
    & +\mathcal{C}_2d_n^{2} \int_{\mathbb{R}^N}\min\{d_n^{p-2}|\nabla w_n|^{p}, |c_n \nabla U_{\lambda_n}|^{p-2}|\nabla w_n|^{2}\} \mathrm{d}x \\
    & -\mathcal{S}_\beta\Bigg\{|c_n|^p\|U\|^{\frac{p^2}{p^*_{\beta}}}
    + o(d_n^2) \\
    & \quad \quad+ \frac{p|c_n|^{p^*_{\beta}-2} d_n^2}{p^*_{\beta}}
    \left(\frac{p^*_{\beta}(p^*_{\beta}-1)}{2}+\kappa\right)
    \|U\|^{\frac{p^2}{p^*_{\beta}}-p}
    \int_{\mathbb{R}^N}|x|^{-\beta}U_{\lambda_n}^{p^*_{\alpha,\beta}-2}
    w_n^2 \mathrm{d}x
    \Bigg\} \\
    \geq  & \frac{(1-\kappa)p}{2} |c_n|^{p-2}d_n^2\int_{\mathbb{R}^N}|\nabla U_{\lambda_n}|^{p-2}  |\nabla w_n|^2\mathrm{d}x \\
    & +\frac{(1-\kappa)p(p-2)}{2}\int_{\mathbb{R}^N}|\omega(c_n \nabla U_{\lambda_n},\nabla u_n)|^{p-2}(|c_n \nabla U_{\lambda_n}|-|\nabla u_n|)^2  \mathrm{d}x\\
    & +\mathcal{C}_2d_n^{2} \int_{\mathbb{R}^N}\min\{d_n^{p-2}|\nabla w_n|^{p}, |c_n \nabla U_{\lambda_n}|^{p-2}|\nabla w_n|^{2}\} \mathrm{d}x \\
    & -\left(\frac{p(p^*_{\beta}-1)}{2}
    +\frac{p\kappa}{p^*_{\beta}}\right)d_n^2
    \int_{\mathbb{R}^N}|x|^{-\beta}U_{\lambda_n}^{p^*_{\beta}-2}w_n^2 \mathrm{d}x
    - o(d_n^2) .
    \end{align*}
    \end{small}
    Lemma \ref{lemsgap2} allows us to reabsorb the last term above: more precisely, we have
    \begin{small}
    \begin{align*}
    & \|u_n\|^p
    -\mathcal{S}_\beta\|u_n\|_*^p
    \\ \geq & pd_n^2\left(\frac{(1-\kappa)}{2} - \frac{(p^*_{\alpha,\beta}-1)+\frac{2}{p^*_{\alpha,\beta}}\kappa}{2(p^*_{\alpha,\beta}-1)
    +2\tau }\right) \\
    & \quad \times
    \int_{\mathbb{R}^N}\left[|\nabla U_{\lambda_n}|^{p-2}  |\nabla w_n|^2
    +(p-2)|\omega(c_n \nabla U_{\lambda_n},\nabla u_n)|^{p-2}\left(\frac{|c_n \nabla U_{\lambda_n}|-|\nabla u_n|}{d_n}\right)^2\right]\mathrm{d}x \\
    & + d_n^{2}\left(\mathcal{C}_2 - \gamma_0\frac{p\left[(p^*_{\beta}-1)
    +\frac{2}{p^*_{\beta}}\kappa\right]}
    {2(p^*_{\beta}-1)+2\tau}\right)
    \int_{\mathbb{R}^N}\min\{d_n^{p-2}|\nabla w_n|^{p}, |c_n \nabla U_{\lambda_n}|^{p-2}|\nabla w_n|^{2}\} \mathrm{d}x
    \\ & - o(d_n^2) .
    \end{align*}
    \end{small}
    Now, let us recall the definition of $\omega$, as stated in Lemma \ref{lemui1p}, we have
    \[
    |\nabla c_nU_{\lambda_n}|^{p-2}  |\nabla w_n|^2
    +(p-2)|\omega(c_n \nabla U_{\lambda_n},\nabla u_n)|^{p-2}\left(\frac{|c_n \nabla U_{\lambda_n}|-|\nabla u_n|}{d_n}\right)^2\geq 0,
    \]
    then  choosing $\kappa>0$ small enough such that
    \[
    \frac{(1-\kappa)}{2} - \frac{(p^*_{\beta}-1)+\frac{2}{p^*_{\beta}}\kappa}{2(p^*_{\beta}-1)
    +2\tau}\geq 0,
    \]
    and then choosing $\gamma_0>0$ small enough such that
    \[
    \frac{\mathcal{C}_2}{2}\geq \gamma_0\frac{p\left[(p^*_{\beta}-1)+\frac{2}{p^*_{\beta}}\kappa\right]}
    {2(p^*_{\beta}-1)+2\tau}.
    \]
    From \eqref{emn21b}, we eventually arrive at
    \begin{small}
    \begin{align}\label{emn212}
    \|u_n\|^p
    -\mathcal{S}_\beta\|u_n\|_*^p
    \geq & \frac{\mathcal{C}_2}{2}d_n^{2}\int_{\mathbb{R}^N}|x|^{\alpha}\min\{d_n^{p-2}|\nabla w_n|^{p}, |c_n \nabla U_{\lambda_n}|^{p-2}|\nabla w_n|^{2}\} \mathrm{d}x
    - o(d_n^2) \nonumber \\
    \geq & cd_n^{2} ,
    \end{align}
    \end{small}
    for some constant $c>0$, thus the conclusion (\ref{rtnmb2}) follows immediately.
    \end{proof}

    Now, we are ready to prove our main result.
\subsection{\bf Proof of Theorem \ref{thmprtp}.}
  We argue by contradiction. In fact, if the theorem is false then there exists a sequence $\{u_n\}\subset \mathcal{D}^{1,p}_{0}(\mathbb{R}^N)\backslash \mathcal{M}_\beta$ such that
    \begin{equation*}
    \liminf_{n\to\infty}
    \frac{\|u_n\|^p
    -\mathcal{S}_\beta\|u_n\|_*^p}
    {d_n^\gamma}
    \to 0,\quad \mbox{as}\quad n\to \infty,
    \end{equation*}
    where $\gamma=\max\{2,p\}$.
    By homogeneity, we can assume that $\|u_n\|=1$, and after selecting a subsequence we can assume that $d_n\to \varpi\in[0,1]$ since $d_n=\inf_{c\in\mathbb{R}, \lambda>0}\|u_n-cU_{\lambda}\|\leq \|u_n\|$. If $\varpi=0$, then we deduce a contradiction by Lemmas  \ref{lemma:rtnm2b} and \ref{lemma:rtnm2b2}.

    The other possibility only is that $\varpi>0$, that is
    \[d_n=\inf_{c\in\mathbb{R}, \lambda>0}\|u_n-cU_{\lambda}\|\to \varpi>0\quad \mbox{as}\quad n\to \infty,\]
    then we must have
    \begin{equation}\label{wbsi}
    \|u_n\|^p
    -\mathcal{S}_\beta\|u_n\|_*^p\to 0,\quad \|u_n\|=1.
    \end{equation}
    By Lions' concentration and compactness principle (see \cite[Theorem 2.4]{Li85-2}),  going if necessary to a subsequence,
    we deduce that there exists a sequence of positive numbers  $\lambda_n>0$ such that
    \begin{equation*}
    \lambda_n^{\frac{N-p}{p}}u_n(\lambda_n x)\to U_*\quad \mbox{in}\quad \mathcal{D}^{1,p}_0(\mathbb{R}^N)\quad \mbox{as}\quad n\to \infty,
    \end{equation*}
    for some $U_*\in\mathcal{M}_\beta$, thus
    \begin{equation*}
    d_n\to 0 \quad \mbox{as}\quad n\to \infty,
    \end{equation*}
    which leads to a contradiction.

    Therefore, the proof of Theorem \ref{thmprtp} is complete.
    \qed

\noindent{\bfseries Acknowledgements}

The research has been supported by National Natural Science Foundation of China (No. 11971392).

\appendix

\section{Several crucial algebra inequalities}\label{sectpls}

\begin{lemma}\label{lemui1p}
    \cite[Lemmas 2.1]{FZ22} Let $x, y\in\mathbb{R}^N$. Then for any $\kappa>0$, there exists a constant $\mathcal{C}_1=\mathcal{C}_1(p,\kappa)>0$ such that
     the following inequalities hold.

    $\bullet$ For $1<p<2$,
    \begin{align}\label{uinb1p}
    |x+y|^p
    \geq & |x|^p+ p|x|^{p-2}x\cdot y+ \frac{1-\kappa}{2}\left(p|x|^{p-2}|y|^2+ p(p-2)|\omega|^{p-2}(|x|-|x+y|)^2 \right) \nonumber\\
    & +\mathcal{C}_1\min\{|y|^p,|x|^{p-2}|y|^2\},
    \end{align}
    where
    \begin{eqnarray*}
    \omega=\omega(x,x+y)=
    \left\{ \arraycolsep=1.5pt
       \begin{array}{ll}
        \left(\frac{|x+y|}{(2-p)|x+y|+(p-1)|x|}\right)^{\frac{1}{p-2}}x,\ \ &{\rm if}\ \ |x|<|x+y|,\\[3mm]
        x,\ \ &{\rm if}\ \  |x+y|\leq |x|.
        \end{array}
    \right.
    \end{eqnarray*}
    Furthermore, it is easy to verify that $|x|^{p-2}|y|^2+(p-2)|\omega|^{p-2}(|x|-|x+y|)^2\geq 0$.

    $\bullet$ For $p\geq 2$,
    \begin{align}\label{uinb2p}
    |x+y|^p
    \geq & |x|^p+ p|x|^{p-2}x\cdot y+ \frac{1-\kappa}{2}\left(p|x|^{p-2}|y|^2+ p(p-2)|\omega|^{p-2}(|x|-|x+y|)^2 \right) \nonumber\\
    & +\mathcal{C}_1 |y|^p ,
    \end{align}
    where
    \begin{eqnarray*}
    \omega=\omega(x,x+y)=
    \left\{ \arraycolsep=1.5pt
       \begin{array}{ll}
        x,\ \ &{\rm if}\ \ |x|<|x+y|,\\[3mm]
        \left(\frac{|x+y|}{|x|}\right)^{\frac{1}{p-2}}(x+y),\ \ &{\rm if}\ \  |x+y|\leq |x|.
        \end{array}
    \right.
    \end{eqnarray*}
    \end{lemma}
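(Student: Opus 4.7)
The natural starting point is the second-order Taylor identity
\[
|x+y|^p - |x|^p - p|x|^{p-2}x\cdot y
= \int_0^1 (1-t)\,\frac{d^2}{dt^2}|x+ty|^p\,dt,
\]
valid whenever the segment $\{x+ty:t\in[0,1]\}$ avoids the origin. Setting $g(t):=|x+ty|$ and computing directly with $g'(t)=(x+ty)\cdot y/g(t)$, one obtains
\[
\frac{d^2}{dt^2}|x+ty|^p = p\,g(t)^{p-2}|y|^2 + p(p-2)\,g(t)^{p-2}(g'(t))^2,
\]
while $\int_0^1 g'(t)\,dt = |x+y|-|x|$ and $|g'(t)|\le|y|$ by Cauchy--Schwarz. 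These three ingredients reduce the whole statement to an algebraic bookkeeping of the integrand against the right-hand side.

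The first main step is to absorb the mixed $(g')^2$-integral into the prescribed quadratic form $(|x|-|x+y|)^2$. Applying Cauchy--Schwarz in the weighted form
\[
\left(\int_0^1 g'(t)\,dt\right)^{\!2}
\le \int_0^1 w(t)\,dt\ \cdot\int_0^1 \frac{(g'(t))^2}{w(t)}\,dt,
\]
with $w$ proportional to $g(t)^{p-2}$, collapses one side of the estimate into $|\omega|^{p-2}(|x|-|x+y|)^2$ with exactly the $\omega$ given in the statement. The case split on $|x|\lessgtr|x+y|$ reflects which direction of Cauchy--Schwarz is useful, and the swap between the $p\ge 2$ and $1<p<2$ cases reflects that $g(t)^{p-2}$ is monotone in opposite directions with respect to $|g|$. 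This construction also yields the claimed positivity $|x|^{p-2}|y|^2+(p-2)|\omega|^{p-2}(|x|-|x+y|)^2\ge 0$ automatically, since it is in essence a Cauchy--Schwarz inequality.

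The second step is to extract the surplus term $\mathcal C_1|y|^p$ (for $p\ge 2$) or $\mathcal C_1\min\{|y|^p,|x|^{p-2}|y|^2\}$ (for $1<p<2$). After paying a small loss $\kappa>0$ in the main quadratic coefficient, the leftover positive integrand is handled by a dichotomy on $|y|/|x|$. When $|y|\lesssim|x|$ one has $g(t)\sim|x|$ uniformly on $[0,1]$, so the remainder is of order $|x|^{p-2}|y|^2$; when $|y|\gtrsim|x|$ one instead invokes the elementary convexity estimate $|x+y|^p\ge(1-\kappa)|y|^p-C_\kappa|x|^p$ and reabsorbs the $|x|^p$ error into the linear term via $p|x|^{p-2}x\cdot y\ge -C'|x|^{p-1}|y|$. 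Combining the two regimes yields $|y|^p$ in the case $p\ge 2$ and, for $1<p<2$, the minimum $\min\{|y|^p,|x|^{p-2}|y|^2\}$, since it is $|x|^{p-2}|y|^2$ that is the smaller of the two precisely when $|y|\le|x|$.

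The main obstacle is the case $1<p<2$. The integrand $g(t)^{p-2}(g'(t))^2$ is singular whenever the segment $\{x+ty\}$ approaches the origin, so the Taylor identity cannot be invoked naively, and the simple monotone comparison $g(t)^{p-2}\ge|x|^{p-2}$ used in the $p\ge 2$ regime fails. The explicit asymmetric $\omega$-formula $|\omega|^{p-2}=(|x+y|/((2-p)|x+y|+(p-1)|x|))\,|x|^{p-2}$ is engineered precisely to compensate for this singularity by matching the endpoint contribution rather than an interior one, and all remaining estimates must be arranged so that the $\kappa$-loss is taken uniformly in $|y|/|x|$. Once this delicate weight matching is in place, the dichotomy argument proceeds as in the $p\ge 2$ case.
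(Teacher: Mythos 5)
The paper itself offers no proof of this lemma: it is quoted verbatim from [FZ22, Lemma 2.1], so the only question is whether your sketch is a self-contained proof, and it is not — the two steps on which everything hinges are asserted rather than carried out, and both have concrete problems. First, the claim that a weighted Cauchy--Schwarz with $w\propto g(t)^{p-2}$ ``collapses'' the integral remainder into $|\omega|^{p-2}(|x|-|x+y|)^2$ \emph{with exactly the stated $\omega$} is unsubstantiated. To retain the factor $\tfrac12=\int_0^1(1-t)\,dt$ you would need the weight $w(t)=(1-t)g(t)^{p-2}$, but then $\int_0^1 w^{-1}\,dt=\infty$ and the naive Cauchy--Schwarz gives nothing; and no computation is offered producing the specific expressions $|\omega|^{p-2}=|x+y|^{p-1}/|x|$ (case $p\ge2$, $|x+y|\le|x|$) or $|\omega|^{p-2}=\frac{|x+y|\,|x|^{p-2}}{(2-p)|x+y|+(p-1)|x|}$ (case $1<p<2$, $|x|<|x+y|$). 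Worse, for $1<p<2$ the $(g')^2$-term enters the Taylor remainder with a \emph{negative} sign, so what is needed is an \emph{upper} bound for $\int_0^1(1-t)g^{p-2}(g')^2\,dt$ relative to $\tfrac12|\omega|^{p-2}(|x|-|x+y|)^2$ plus a portion of the $|y|^2$-term; Cauchy--Schwarz only bounds such integrals from \emph{below} by $\bigl(\int_0^1 g'\,dt\bigr)^2=(|x+y|-|x|)^2$. Indeed, when $|x+y|=|x|$ with $y\neq0$ the $\omega$-term on the right of \eqref{uinb1p} vanishes while the integral is strictly positive, so the absorption must go through the pointwise bound $|g'|\le|y|$; and the configuration $y=-2x$ (where $\int_0^1(1-t)|1-2t|^{p-2}dt=\tfrac{1}{2(p-1)}$) shows the resulting constant is \emph{exactly} $\tfrac12$, i.e.\ there is no slack in this step beyond the $\kappa$-loss, so it cannot be waved through.

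Second, your dichotomy on $|y|/|x|$ only covers the extremes. For $|y|\le c|x|$ the argument $g\sim|x|$ plus the $\kappa$-slack is fine, and for $|y|\ge K(\kappa)|x|$ with $p>2$ the crude bound works, but in the intermediate regime $|y|\sim|x|$ the estimate $|x+y|^p\ge(1-\kappa)|y|^p-C_\kappa|x|^p$ is useless (its right-hand side is very negative), while the right-hand sides of \eqref{uinb1p}--\eqref{uinb2p} are of order $|x|^p$ and the left-hand side can be arbitrarily small (e.g.\ $x+y\to0$); this is precisely the regime where the particular form of $\omega$ and the constant $\tfrac{1-\kappa}{2}$ matter, and the proposal offers no argument there (a compactness argument in the scale-invariant variables $|y|/|x|$ and $\cos\theta$ could in principle close it, but it must be set up and the strict inequality without the $\mathcal C_1$-term verified, which is exactly the nontrivial content). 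So the plan has the right flavor — second-order Taylor with integral remainder, $\kappa$-slack, case analysis — but the parts that make the lemma true with this specific $\omega$ and sharp coefficient are missing; as written this is an outline, not a proof, whereas the paper simply cites [FZ22, Lemma 2.1].
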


    \begin{lemma}\label{lemui1p*l}
    Let $a, b\in\mathbb{R}$. Then for any $\kappa>0$, there exists a constant $\mathcal{C}_2=\mathcal{C}_2(p^*_{\beta},\kappa)>0$ where $p^*_{\beta}=\frac{p(N-\beta)}{N-p}$ such that
     the following inequalities hold.

    $\bullet$ For $1<p\leq\frac{2N}{N+2-\beta}$,
    \begin{equation}\label{uinx2pl}
    |a+b|^{p^*_{\beta}}
    \leq |a|^{p^*_{\beta}}+ p^*_{\beta}|a|^{p^*_{\beta}-2}a b
    + \left(\frac{p^*_{\beta}(p^*_{\beta}-1)}{2}+\kappa\right)
    \frac{(|a|+\mathcal{C}_2|b|)^{{p^*_{\beta}}}|b|^2}
    {|a|^2+|b|^2}|b|^{2}.
    \end{equation}

    $\bullet$ For $\frac{2N}{N+2-\beta}< p<N$,
    \begin{equation}\label{uinx2pb}
    |a+b|^{p^*_{\beta}}
    \leq |a|^{p^*_{\beta}}+ p^*_{\beta}|a|^{p^*_{\beta}-2}a b
    + \left(\frac{p^*_{\beta}(p^*_{\beta}-1)}{2}+\kappa\right)
    |a|^{{p^*_{\beta}}-2}|b|^2 +\mathcal{C}_2|b|^{p^*_{\beta}}.
    \end{equation}
    \end{lemma}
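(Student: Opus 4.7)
The strategy is to treat both bounds as quantitative second-order Taylor expansions of $g(t):=|a+tb|^{p^*_\beta}$ at $t=0$ evaluated at $t=1$. The cases $a=0$ or $b=0$ are immediate, and by positive one-homogeneity in the pair $(|a|,|b|)$ together with the sign symmetry $b\mapsto -b$ we may normalize $a>0$ and reduce to a scalar inequality in $s=b/a\in\mathbb{R}$. From this point the two bullets behave differently and should be treated separately.

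For the second bullet, $p^*_\beta>2$, I would use the integral form of Taylor's remainder
\[
g(1)-g(0)-g'(0)=p^*_\beta(p^*_\beta-1)\int_0^1(1-t)|a+tb|^{p^*_\beta-2}b^2\,\mathrm{d}t,
\]
and split the weight by the elementary convexity bound $(x+y)^q\leq(1+\kappa_0)x^q+C(q,\kappa_0)y^q$, valid for $q>0$, $x,y\geq 0$ and any $\kappa_0>0$, applied with $q=p^*_\beta-2$, $x=|a|$ and $y=t|b|$. Integrating in $t$ yields a main term $(1+\kappa_0)\frac{p^*_\beta(p^*_\beta-1)}{2}|a|^{p^*_\beta-2}|b|^2$ and a lower-order term bounded by $\mathcal{C}_2|b|^{p^*_\beta}$; picking $\kappa_0$ small relative to $\kappa$ delivers \eqref{uinx2pb}.

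The first bullet is where the real difficulty lies: for $1<p^*_\beta\leq 2$ the weight $|a+tb|^{p^*_\beta-2}$ in the Taylor remainder is singular where $a+tb=0$, so the pointwise split above fails. This is precisely why the RHS is written in the interpolated form $\frac{(|a|+\mathcal{C}_2|b|)^{p^*_\beta}}{|a|^2+|b|^2}\,|b|^2$, which behaves like $|a|^{p^*_\beta-2}|b|^2$ for $|b|\ll|a|$ and like $\mathcal{C}_2^{p^*_\beta}|b|^{p^*_\beta}$ for $|b|\gg|a|$. After normalization I would set
\[
\Phi_1(s):=|1+s|^{p^*_\beta}-1-p^*_\beta s,\qquad \Phi_2(s;\mathcal{C}_2):=\Bigl(\tfrac{p^*_\beta(p^*_\beta-1)}{2}+\kappa\Bigr)\frac{(1+\mathcal{C}_2|s|)^{p^*_\beta}}{1+s^2}\,s^2,
\]
and prove $\Phi_1\leq \Phi_2(\,\cdot\,;\mathcal{C}_2)$ on $\mathbb{R}$ in three regions: (i) on $|s|\leq\delta$, for $\delta=\delta(\kappa,p^*_\beta)$ small and \emph{independent of} $\mathcal{C}_2$, Taylor gives $\Phi_1(s)=\frac{p^*_\beta(p^*_\beta-1)}{2}s^2+O(|s|^3)$ and $\Phi_2(s;\mathcal{C}_2)\geq\bigl(\frac{p^*_\beta(p^*_\beta-1)}{2}+\kappa\bigr)\frac{s^2}{1+s^2}$ (using only $(1+\mathcal{C}_2|s|)^{p^*_\beta}\geq 1$), so the extra $\kappa s^2$ absorbs the cubic error; (ii) on $|s|\geq 1$ one has $\Phi_1(s)\leq C|s|^{p^*_\beta}$ while $\Phi_2(s;\mathcal{C}_2)\geq c\,\mathcal{C}_2^{p^*_\beta}|s|^{p^*_\beta}$, so $\mathcal{C}_2$ chosen large enough dominates; (iii) on the compact annulus $\delta\leq|s|\leq 1$, $\Phi_1$ is bounded while $\Phi_2(s;\mathcal{C}_2)\geq c_\delta(1+\mathcal{C}_2\delta)^{p^*_\beta}\to\infty$ as $\mathcal{C}_2\to\infty$, so enlarging $\mathcal{C}_2$ one final time finishes the proof and yields \eqref{uinx2pl}.

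The main obstacle is clearly the first bullet: the pointwise Taylor remainder is not integrable in a scale-invariant way because of the negative exponent $p^*_\beta-2$, so no direct splitting of the integrand reproduces the RHS. The compactness/three-region argument above succeeds because the interpolated bound already carries both a quadratic regime near $s=0$ (matching the second derivative) and a $p^*_\beta$-power regime at infinity (matching the large-perturbation behaviour); $\mathcal{C}_2$ is chosen last and only has to dominate $\Phi_1$ on a single compact intermediate region whose boundary depends on $\kappa$ and $p^*_\beta$ alone.
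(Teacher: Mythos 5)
Your proof is correct, but it takes a genuinely different route from the paper's. The paper settles Lemma \ref{lemui1p*l} in two lines: it observes that the two ranges of $p$ correspond exactly to $p^*_\beta\le 2$ and $p^*_\beta>2$, and then quotes the known one--variable inequalities \cite[Lemma 3.2]{FN19} and \cite[Lemma 2.4]{FZ22} with the exponent there replaced by $p^*_\beta$ (the constants in those lemmas depend only on the exponent and $\kappa$, so nothing has to be re-proved). You instead give a self-contained argument: for $p^*_\beta>2$, Taylor's formula with integral remainder plus the splitting $(x+y)^q\le(1+\kappa_0)x^q+C(q,\kappa_0)y^q$, and for $p^*_\beta\le 2$ a reduction by $p^*_\beta$-homogeneity to the scalar inequality $\Phi_1\le\Phi_2$, proved on three regions with $\delta=\delta(\kappa,p^*_\beta)$ fixed first and $\mathcal{C}_2$ chosen last; this is essentially the mechanism behind the cited lemmas, so what you gain is independence from the references at the cost of length. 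Three small points to tighten: the symmetry needed to normalize $a>0$ is $(a,b)\mapsto(-a,-b)$, not $b\mapsto-b$ alone (the linear term and $|a+b|$ are not invariant under the latter); since $s=b/a$ is unchanged, your analysis over all $s\in\mathbb{R}$ is unaffected. In region (i) the margin $\kappa$ must absorb not only the cubic Taylor error of $\Phi_1$ but also the $O(s^4)$ loss incurred in replacing $s^2$ by $s^2/(1+s^2)$ in $\Phi_2$, which it does for $\delta$ small. Finally, the displayed inequality \eqref{uinx2pl} carries a spurious extra factor $|b|^2$; the version you prove, with a single factor $|b|^2$, is the one actually used later in the paper (see the estimates in Lemma \ref{lemma:rtnm2b2}), so your reading is the intended one.
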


    \begin{proof}
    We notice that $1<p\leq\frac{2N}{N+2-\beta}$ indicates $p^*_{\beta}\leq2$ and $\frac{2N}{N+2-\beta}< p<N$ indicates $p^*_{\beta}> 2$, then the above inequalities directly follow from \cite[Lemma 3.2]{FN19} and \cite[Lemma 2.4]{FZ22}.
    \end{proof}

\section{Some useful estimates}\label{appsue}

     The following lemmas mainly play roles in proving  Lemma \ref{propcetl}.
     Firstly, according the work of \cite{Sk14}, we establish the following Hardy-Poincar\'{e} type inequality:
    \begin{lemma}\label{lem:A-hpiw}
Let $1<p<N$, $0<\beta<p$  and $\xi\geq 1$. Then, for any compactly supported function $w\in \mathcal{D}^{1,p}_0(\mathbb{R}^N)$, one has
    \begin{align}\label{HPIw}
    \bar{C}_{N,p,\beta,\xi}\int_{\mathbb{R}^N}|w|^{p}
    |x|^{-\beta}\left[\left(1+|x|^{\frac{p-\beta}{p-1}}
    \right)^{p-1}\right]^{\xi-1}
    \mathrm{d}x
    \leq & \int_{\mathbb{R}^N}|\nabla w|^{p}
    \left[\left(1+|x|^{\frac{p-\beta}{p-1}}\right)^{p-1}\right]^{\xi}
    \mathrm{d}x,
    \end{align}
    for some constant $\bar{C}_{N,p,\beta,\xi}>0$.
    \end{lemma}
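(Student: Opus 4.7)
The plan is to prove the inequality by the classical method of positive supersolutions combined with the Picone/Allegretto--Huang identity, which is the standard strategy for weighted Hardy--Sobolev-type inequalities involving the $p$-Laplacian. Set $q=(p-\beta)/(p-1)$ and $\rho(x)=(1+|x|^{q})^{p-1}$, and look for a radial positive supersolution of the form $\phi(x)=(1+|x|^{q})^{-a}$, with $a>0$ to be chosen.

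First I would compute the weighted $p$-Laplacian of $\phi$ directly. Since $\phi$ is radial and $(q-1)(p-1)=1-\beta$, a straightforward calculation shows
\[
-\mathrm{div}\bigl(\rho^{\xi}|\nabla\phi|^{p-2}\nabla\phi\bigr)
=(aq)^{p-1}|x|^{-\beta}(1+|x|^{q})^{(p-1)(\xi-a-1)-1}\Bigl[(N-\beta)+\bigl((N-\beta)+(p-\beta)(\xi-a-1)\bigr)|x|^{q}\Bigr].
\]
The target right-hand side weight equals $|x|^{-\beta}\rho^{\xi-1}\phi^{p-1}=|x|^{-\beta}(1+|x|^{q})^{(p-1)(\xi-a-1)}$. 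Hence, as long as both $N-\beta>0$ and $(N-\beta)+(p-\beta)(\xi-a-1)>0$ --- equivalently $a<\xi+(N-p)/(p-\beta)$ --- I obtain pointwise on $\mathbb{R}^{N}\setminus\{0\}$
\[
-\mathrm{div}\bigl(\rho^{\xi}|\nabla\phi|^{p-2}\nabla\phi\bigr)\geq \bar{C}_{N,p,\beta,\xi}\,|x|^{-\beta}\rho^{\xi-1}\phi^{p-1}.
\]
For concreteness I would fix $a=\xi$, which satisfies the constraint since $N>p$.

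Second, I would apply the Picone testing argument. For a compactly supported $w\in\mathcal{D}^{1,p}_{0}(\mathbb{R}^{N})$, I use the nonnegative function $\varphi=|w|^{p}\phi^{1-p}$ as a test function in the supersolution inequality. Since $\phi$ is smooth, positive, and bounded above and below by positive constants on every compact set, $\varphi$ is admissible after a standard approximation. Integration by parts yields
\[
\int_{\mathbb{R}^{N}}\rho^{\xi}|\nabla\phi|^{p-2}\nabla\phi\cdot\nabla\!\left(\frac{|w|^{p}}{\phi^{p-1}}\right)\mathrm{d}x\ \geq\ \bar{C}_{N,p,\beta,\xi}\int_{\mathbb{R}^{N}}|x|^{-\beta}\rho^{\xi-1}|w|^{p}\,\mathrm{d}x.
\]
Expanding the gradient on the left gives two terms, which combine via Young's inequality in the pointwise form
\[
\frac{p|w|^{p-2}w\,|\nabla\phi|^{p-2}\nabla\phi\cdot\nabla w}{\phi^{p-1}}-\frac{(p-1)|\nabla\phi|^{p}|w|^{p}}{\phi^{p}}\leq |\nabla w|^{p},
\]
(which is the Allegretto--Huang/Picone inequality). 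Integrating this against the weight $\rho^{\xi}$ bounds the left-hand side by $\int_{\mathbb{R}^{N}}\rho^{\xi}|\nabla w|^{p}\,\mathrm{d}x$, and the lemma follows.

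The only mildly delicate point --- and the step I would treat carefully --- is the justification of the test function $\varphi=|w|^{p}\phi^{1-p}$ and the integration by parts near the origin, where $|x|^{-\beta}$ is singular and the distributional identity for $\mathrm{div}(\rho^{\xi}|\nabla\phi|^{p-2}\nabla\phi)$ must be handled. Since $\phi$ itself is smooth and bounded, the issue reduces to controlling $|w|^{p}$ on small balls; truncating $w$ by $\chi_{|x|>\varepsilon}$, using that $w\in\mathcal{D}^{1,p}_{0}(\mathbb{R}^{N})$ embeds continuously into $L^{p^{*}}(\mathbb{R}^{N})$, and verifying that the boundary flux on $\{|x|=\varepsilon\}$ vanishes as $\varepsilon\to 0$ (a Hardy-type estimate on the shell, trivial because $\beta<p<N$), one passes to the limit. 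This approximation argument is classical in the Picone literature, so the proof is essentially complete once the supersolution computation above is carried out.
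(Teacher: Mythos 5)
Your proposal is correct, and the computation checks out: with $q=\frac{p-\beta}{p-1}$ one has $\rho^{\xi}|\nabla\phi|^{p-2}\nabla\phi=-(aq)^{p-1}|x|^{-\beta}(1+|x|^{q})^{(p-1)(\xi-a-1)}x$, whose divergence gives exactly your displayed identity, and with $a=\xi$ the bracket is bounded below by $\min\{N-\beta,\,N-p\}=N-p>0$, so the supersolution inequality holds with $\bar{C}=\bigl(\tfrac{\xi(p-\beta)}{p-1}\bigr)^{p-1}(N-p)$; the Picone/Allegretto--Huang step and the removal of the origin (the flux on $\{|x|=\varepsilon\}$ scales like $\varepsilon^{N-\beta}\to0$) are routine as you indicate. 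Your route is, however, genuinely different from the paper's: the paper treats $\xi=1$ separately via the classical weighted Hardy inequality and, for $\xi>1$, invokes Skrzypczak's general Hardy-type theorem derived from $p$-harmonic problems, taking $u_{\upsilon}=(1+|x|^{q})^{-\upsilon}$ as a supersolution of the \emph{unweighted} $p$-Laplace equation and then producing the two weights through the measures $\mu_1,\mu_2$ and a careful choice of the parameters $\upsilon,\varsigma,\sigma$ (including computing the threshold $\sigma_0$). You instead absorb the weight $\rho^{\xi}$ into the operator from the start and run the supersolution-plus-Picone argument directly, which makes the proof self-contained (no external theorem), reduces the parameter bookkeeping to the single exponent $a$, and yields a constant that does not degenerate as $\xi\to1$, so both cases $\xi=1$ and $\xi>1$ are handled uniformly; what the paper's reduction buys is that the admissibility/approximation issues for the test functions are delegated to the cited result rather than argued by hand, which is the one point your write-up would need to carry out explicitly (and which you correctly identify as the only delicate step).
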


    \begin{proof}
    When $\xi=1$, \eqref{HPIw} reduces to classical Hardy inequality (see \cite[Theorem 4.1]{Sk14}), that is,
    \begin{align*}
    \int_{\mathbb{R}^N}|\nabla w|^{p}\left(1+|x|^{\frac{p-\beta}{p-1}}\right)^{p-1}
    \mathrm{d}x
    \geq \int_{\mathbb{R}^N}|\nabla w|^{p}|x|^{p-\beta}
    \mathrm{d}x
    \geq \left(\frac{N-\beta}{p}\right)^p
    \int_{\mathbb{R}^N}|w|^{p}
    |x|^{-\beta}
    \mathrm{d}x.
    \end{align*}
    So we only need to prove the case $\xi>1$.

    First we note that, by standard density argument, it suffices to prove \eqref{HPIw} for every compactly supported function $w\in C^1_{c,0}(\mathbb{R}^N)$.
    Indeed, let $w\in \mathcal{D}^{1,p}_0(\mathbb{R}^N)$ and
    \begin{eqnarray*}
    \phi(x)=
    \left\{ \arraycolsep=1.5pt
       \begin{array}{ll}
        1,\ \ &{\rm for}\ \ |x|<1,\\[2mm]
       -|x|+2,\ \ &{\rm for}\ \  1\leq |x|\leq 2,\\[2mm]
        0,\ \ &{\rm for}\ \ |x|> 2,
        \end{array}
    \right.
    \end{eqnarray*}
    and
    \begin{align*}
    \phi_R(x)=\phi\left(\frac{x}{R}\right),\quad w_R(x)=w(x)\phi_R(x).
    \end{align*}
    An easy verification shows that $w_R\to w$ in $\mathcal{D}^{1,p}_0(\mathbb{R}^N)$.  A standard convolution argument shows that every compactly supported function $w\in \mathcal{D}^{1,p}_0(\mathbb{R}^N)$ can be approximated in $ \mathcal{D}^{1,p}_0(\mathbb{R}^N)$ by compactly supported functions in $C^1_{c,0}(\mathbb{R}^N)$.

    Let us consider the function $u_\upsilon(x)=\left(1+|x|^{\frac{p-\beta}{p-1}}
    \right)^{-\upsilon}$ with $\upsilon>0$. Now the proof follows by steps.

    $\bullet$ {\em \color{blue} Step 1}. We recognize $u_\upsilon$ locally in $ \mathcal{D}^{1,p}_0(\mathbb{R}^N)$ and that it is a nonnegative solution to the following equation
    \begin{equation}\label{PpwhAe}
    -\mathrm{div}(|\nabla u_\upsilon|^{p-2}\nabla u_\upsilon)=
     |x|^{-\beta}d \left(1+|x|^{\frac{p-\beta}{p-1}}
    \right)^{\upsilon-\upsilon p-p} \left(1+k|x|^{\frac{p-\beta}{p-1}}
    \right)=:\Phi,
    \end{equation}
     a.e. in $\mathbb{R}^N$, where
    \begin{equation}\label{PpwhAc}
    d
    =\left(\frac{\upsilon(p-\beta)}{p-1}\right)^{p-1}(N-\beta),\quad \mbox{and}\quad
    k
    =1-\frac{(\upsilon+1)(p-\beta)}{N-\beta}.
    \end{equation}
    Moreover, it is easy to verify that $\Phi$ satisfies the following (same as  \cite[Definition 2.2]{Sk14}): let $\Omega$ be an open subset of $\mathbb{R}^N$, for every nonnegative compactly supported $v\in \mathcal{D}^{1,p}_0(\mathbb{R}^N)$ it holds that
    \[
    \int_{\Omega} \Phi v \mathrm{d}x>-\infty.
    \]

    $\bullet$ {\em \color{blue} Step 2}. Let us recall  a Hardy-type inequality \cite[Theorem 4.1]{Sk13}, we deduce that let $\varsigma$ and $\sigma$ be arbitrary numbers such that $\varsigma>0$ and $\varsigma>\sigma\geq \sigma_0$,  where
    \[
    \sigma_0:=-\inf\left\{\sigma\in\mathbb{R}: \Phi u_\upsilon+\sigma|\nabla u_\upsilon|^p\geq 0 \quad \mbox{a.e. in}\quad \{u_\upsilon>0\}\cap \Omega\right\}\in\mathbb{R},
    \]
    then for every Lipschitz function $w$ with compact support in $\Omega$, it holds that
    \begin{equation}\label{PpwhAcy}
    \int_{\Omega}|w|^p \mu_1(\mathrm{d}x)\leq \int_{\Omega}|\nabla w|^p \mu_2(\mathrm{d}x),
    \end{equation}
    where
    \begin{align}
    \mu_1(\mathrm{d}x)
    = & \left(\frac{\varsigma-\sigma}{p-1}\right)^{p-1}
    \left[\Phi u_\upsilon+\sigma |\nabla u_\upsilon|^p\right]u_\upsilon^{-\varsigma-1}\chi_{\{u_\upsilon>0\}}\mathrm{d}x,
    \label{mu1}
    \\ \mu_2(\mathrm{d}x)
    = & u_\upsilon^{p-\varsigma-1}\chi_{\{|\nabla u_\upsilon|\neq 0\}}\mathrm{d}x.
    \label{mu2}
    \end{align}
    Here $\chi_E$ denotes that $\chi_E=1$ if $x\in E$ and  $\chi_E=0$ if $x\notin E$.
    By direct calculation, we deduce  that
    \begin{align*}
    \sigma_0
    = & -\mathrm{ess}\inf\left(\frac{\Phi u_\upsilon}{|\nabla u_\upsilon|^p}\right)
    \\ = & -\inf\frac{\left(\frac{\upsilon(p-\beta)}{p-1}\right)^{p-1}(N-\beta)
    \left(1+|x|^{\frac{p-\beta}{p-1}}
    \right)^{\upsilon-\upsilon p-p} \left(1+\left(1-\frac{(\upsilon+1)(p-\beta)}{N-\beta}\right)|x|^{\frac{p-\beta}{p-1}}
    \right)|x|^{-\beta}}
    {\left(\frac{\upsilon(p-\beta)}{p-1}\right)^{p}\left(1+|x|^{\frac{p-\beta}{p-1}}
    \right)^{-(\upsilon+1)p}|x|^{\frac{(1-\beta)p}{p-1}}}
    \\ =  & -\inf \frac{(p-1)
    \left(1+|x|^{\frac{p-\beta}{p-1}}
    \right)^{\upsilon} \left(N-\beta+\left(N-\beta-(\upsilon+1)(p-\beta)\right)|x|^{\frac{p-\beta}{p-1}}
    \right)}
    {\upsilon(p-\beta)|x|^{\frac{p-\beta}{p-1}}}
    \\ =  & -\frac{(p-1)\left(N-\beta-(\upsilon+1)(p-\beta)\right)}{\upsilon(p-\beta)}.
    \end{align*}

    $\bullet$ {\em \color{blue} Step 3}. For given $\upsilon>-\xi$, define $\varsigma=(p-1)\left(\frac{\xi}{\upsilon}+1\right)$.  In order to apply inequality \eqref{PpwhAcy}, we require that $\varsigma>0$ and that $\sigma\in\mathbb{R}$ is such that $\varsigma>\sigma\geq \sigma_0$. This is equivalent to the condition $\xi>\max\{-\upsilon,\frac{p-N}{p-\beta}\}$, which obviously holds for all $\xi>1$, $\varsigma>0$.

    We are going to compute the measure $\mu_1(\mathrm{d}x)$ given by \eqref{mu1}. We note that $\xi=\upsilon\left(\frac{\varsigma}{p-1}-1\right)$ and $-p(\upsilon+1)+\upsilon(\varsigma+1)=(p-1)(\xi-1)-1$ and recall that $d$ and $k$ are given in \eqref{PpwhAc}. Applying these formulates to \eqref{mu1}, we obtain
    \begin{small}
    \begin{align}\label{mu1c}
    \mu_1(\mathrm{d}x)
    = & \left(\frac{\varsigma-\sigma}{p-1}\right)^{p-1}
    \left[\Phi u_\upsilon+\sigma |\nabla u_\upsilon|^p\right]u_\upsilon^{-\varsigma-1}\mathrm{d}x
    \nonumber\\
    = & \left(\frac{\varsigma-\sigma}{p-1}\right)^{p-1}
    \left[
    \frac{|x|^{-\beta}d \left(1+k|x|^{\frac{p-\beta}{p-1}}\right)}
    {\left(1+|x|^{\frac{p-\beta}{p-1}}\right)^{p(\upsilon+1)}}
    + \frac{\left(\frac{\upsilon(p-\beta)}{p-1}\right)^{p}\sigma |x|^{\frac{p(1-\beta)}{p-1}}}
    {\left(1+|x|^{\frac{p-\beta}{p-1}}\right)^{p(\upsilon+1)}}
    \right]
    \left(1+|x|^{\frac{p-\beta}{p-1}}\right)^{\upsilon(\varsigma+1)}
    \mathrm{d}x
    \nonumber\\
    = & |x|^{-\beta}\left(\frac{(\varsigma-\sigma)\upsilon (p-\beta)}{(p-1)^2}\right)^{p-1}
    \left\{N-\beta+\left[N-\beta-(\upsilon+1)(p-\beta)+\frac{\sigma \upsilon (p-\beta)}{p-1}
    \right]
    |x|^{\frac{p-\beta}{p-1}}\right\}
    \nonumber\\
    & \quad \times \left(1+|x|^{\frac{p-\beta}{p-1}}\right)^{-1}
    \left[\left(1+|x|^{\frac{p-\beta}{p-1}}\right)^{p-1}\right]^{\xi-1}
    \mathrm{d}x,
    \end{align}
    \end{small}
    with after substitution of $\varsigma=\frac{(p-1)(\upsilon+\xi)}{\upsilon}$, we obtain from \eqref{mu2} that
    \begin{align}
    \mu_2(\mathrm{d}x)
    = & u_\upsilon^{p-\varsigma-1}\chi_{\{|\nabla u_\upsilon|\neq 0\}}\mathrm{d}x
    = \left[\left(1+|x|^{\frac{p-\beta}{p-1}}\right)^{-\upsilon}\right]^{p-\varsigma-1}
    \mathrm{d}x
    = \left[\left(1+|x|^{\frac{p-\beta}{p-1}}\right)^{p-1}\right]^{\xi}\mathrm{d}x.
    \label{mu2c}
    \end{align}
    We choose $\sigma:=\frac{(p-1)(\upsilon+1)}{\upsilon}$ and realize that
    \[
    \varsigma =\frac{(p-1)(\upsilon+\xi)}{\upsilon}
    >\sigma>\sigma_0
    =\frac{(p-1)\left[(\upsilon+1)-(N-\beta)/(p-\beta)\right]}{\upsilon},
     \]
    since $\xi>1$ and $\max\{1,\beta\}<p<N$, then combining inequality \eqref{PpwhAcy} with \eqref{mu1c} and \eqref{mu2c}, we get our conclusion \eqref{HPIw} with
    \[
    \bar{C}_{N,p,\beta,\xi}
    =(N-\beta)\left(\frac{(\xi-1)(p-\beta)}{p-1}\right)^{p-1},\quad \mbox{if}\quad \xi>1.
    \]
    \end{proof}

\begin{lemma}\label{lem:A-ni}
Let $1<p\leq \frac{2N}{N+2-\beta}$ and $0<\beta<p$. Given $\varepsilon_0>0$, there exists $\eta=\eta(\varepsilon_0)>0$ small enough so that the following inequality holds for any nonnegative numbers $\varepsilon$, $r$, $a$,$b$ satisfying $\varepsilon\in (0,1)$ and $\varepsilon a \leq \zeta \left(1+r^{\frac{p-\beta}{p-1}}\right)^{-\frac{N-p}{p-\beta}}$:
\begin{align}
    & \left(1+r^{\frac{p-\beta}{p-1}}\right)^{-\frac{N-p}{p-\beta}(p^*_\beta-2)+p-1}
    \left[
    a^2\zeta^p r^{\frac{p(1-\beta)}{p-1}}\left(1+r^{\frac{p-\beta}{p-1}}\right)^{-p}
    +a^2 \varepsilon^p b^p \left(1+r^{\frac{p-\beta}{p-1}}\right)^{\frac{(N-p)p}{p-\beta}}
    +a^{2-p}b^p
    \right]
    \nonumber \\
    \leq & \varepsilon_0  r^{-\beta} \left(1+r^{\frac{p-\beta}{p-1}}\right)^{-\frac{N-p}{p-\beta}(p^*_\beta-2)}a^2
    +C (1+r)^{-\frac{p-\beta}{p-1}}
    \left(\left(1+r^{\frac{p-\beta}{p-1}}\right)^{-\frac{N-\beta}{p-\beta}}
    r^{\frac{1-\beta}{p-1}}+\varepsilon b\right)^{p-2}b^2
    \label{nif}
    \\
    \leq & \varepsilon_0    r^{-\beta} \left(1+r^{\frac{p-\beta}{p-1}}\right)^{-\frac{N-p}{p-\beta}(p^*_\beta-2)}a^2
    +C
    \left(\left(1+r^{\frac{p-\beta}{p-1}}\right)^{-\frac{N-\beta}{p-\beta}}
    r^{\frac{1-\beta}{p-1}}+\varepsilon b\right)^{p-2}b^2.
    \label{nis}
    \end{align}
\end{lemma}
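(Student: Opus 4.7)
The plan is to decompose the LHS of \eqref{nif} into three pieces and estimate each, using the hypothesis $\varepsilon a\leq \zeta U$ together with weighted Young's inequality. Write $R:=1+r^{(p-\beta)/(p-1)}$, $\tau:=(p-\beta)/(p-1)$, $\Lambda:=R^{p-1-(N-p)(p^*_\beta-2)/(p-\beta)}$, $U:=R^{-(N-p)/(p-\beta)}$, $G:=R^{-(N-\beta)/(p-\beta)}r^{(1-\beta)/(p-1)}$, and $W_1:=r^{-\beta}U^{p^*_\beta-2}$, $W_2:=(1+r)^{-\tau}$. The LHS equals $\Lambda(T_1+T_2+T_3)$ with
\[
T_1:=\zeta^p a^2 r^{p(1-\beta)/(p-1)}R^{-p},\quad T_2:=a^2\varepsilon^p b^p R^{(N-p)p/(p-\beta)},\quad T_3:=a^{2-p}b^p.
\]
The crucial observation I would use is that, thanks to $U^p R^{(N-p)p/(p-\beta)}=1$ and $\varepsilon a\leq\zeta U$, one has $T_2/T_3=(\varepsilon a)^p R^{(N-p)p/(p-\beta)}\leq \zeta^p$, so $\Lambda T_2\leq \zeta^p\Lambda T_3$ and it is enough to handle $\Lambda T_1$ and $\Lambda T_3$.

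For $\Lambda T_1$, the elementary identity $p(1-\beta)/(p-1)+\beta=\tau$ gives $r^{p(1-\beta)/(p-1)}=r^{-\beta}r^\tau\leq r^{-\beta}R$, which yields $\Lambda T_1\leq \zeta^p W_1 a^2$; choosing $\zeta$ small enough that $\zeta^p\leq\varepsilon_0/2$ absorbs this term into $(\varepsilon_0/2)W_1 a^2$.

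For $\Lambda T_3$ I would split into two regimes, noting that $(G+\varepsilon b)^{p-2}b^2$ is comparable to $G^{p-2}b^2$ when $\varepsilon b\leq G$ (Case A) and to $\varepsilon^{p-2}b^p$ when $\varepsilon b>G$ (Case B). In Case A, apply weighted Young with exponents $2/(2-p)$ and $2/p$:
\[
\Lambda a^{2-p}b^p=\bigl(W_1^{(2-p)/2}a^{2-p}\bigr)\bigl(W_1^{-(2-p)/2}\Lambda b^p\bigr)\leq \delta W_1 a^2+C_\delta W_1^{-(2-p)/p}\Lambda^{2/p}b^2,
\]
and verify by matching $r$- and $R$-exponents that $W_1^{-(2-p)/p}\Lambda^{2/p}\leq CR^{-1}G^{p-2}\leq CW_2 G^{p-2}$; this verification relies on the identities $(N-p)(p^*_\beta-p)=p(p-\beta)$, $\tau(p-1)=p-\beta$ and the elementary $R^{-1}\leq C(1+r)^{-\tau}$. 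In Case B, I would use $a^{2-p}\leq (\zeta U/\varepsilon)^{2-p}=\zeta^{2-p}\varepsilon^{p-2}U^{2-p}$ directly together with the algebraic identity $\Lambda U^{2-p}=R^{p-1-(N-p)(p^*_\beta-p)/(p-\beta)}=R^{-1}\leq CW_2$ to get $\Lambda T_3\leq C\zeta^{2-p}W_2\varepsilon^{p-2}b^p$, which matches the RHS in this regime.

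Summing the three estimates with, e.g., $\delta=\varepsilon_0/4$ then yields \eqref{nif}, and \eqref{nis} follows by discarding the harmless factor $(1+r)^{-\tau}\leq 1$. The main obstacle will be the exponent bookkeeping in Case A confirming $W_1^{-(2-p)/p}\Lambda^{2/p}\sim R^{-1}G^{p-2}$: no single step is deep, but the algebra is delicate and hinges on the identity $(N-p)(p^*_\beta-p)=p(p-\beta)$; the hypothesis $p\leq 2N/(N+2-\beta)$ (forcing $\beta<2$ and hence $p<2$) enters precisely to ensure the Young exponents $2/(2-p)$ and $2/p$ are both legitimate and greater than $1$.
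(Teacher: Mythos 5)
Your argument is correct, but it is organized quite differently from the paper's. The paper reduces \eqref{nif} separately on the regimes $0<r\le 1$ and $r>1$ (where $1+r^{\frac{p-\beta}{p-1}}$ is comparable to $1$, resp.\ to $r^{\frac{p-\beta}{p-1}}$) and then runs a threshold case analysis in $b$: for the term $a^2\varepsilon^p b^p$ it distinguishes $\varepsilon b$ small/large, and for $a^{2-p}b^p$ three ranges of $b$, absorbing the small-$b$ cases into the $\varepsilon_0\, r^{-\beta}(\cdot)a^2$ term and invoking $\varepsilon a\le\zeta\bigl(1+r^{\frac{p-\beta}{p-1}}\bigr)^{-\frac{N-p}{p-\beta}}$ in the large-$b$ cases; the first term is handled by $\zeta^p\le\varepsilon_0/3$, exactly as you do. Your version removes the $r$-regime split through exact exponent identities: I checked that $\Lambda U^{2-p}=R^{-1}$ and $W_1^{-(2-p)/p}\Lambda^{2/p}\le R^{-1}G^{p-2}$ (the latter reduces to $r^{\tau(2-p)/p}\le R^{(2-p)/p}$) both hold, resting on $\frac{N-p}{p-\beta}(p^*_\beta-p)=p$, and $p<2$ indeed follows from $p<p^*_\beta\le 2$, so the Young exponents $\frac{2}{2-p},\frac{2}{p}$ are legitimate; moreover your observation $T_2\le\zeta^p T_3$, coming from $(\varepsilon a)^p R^{\frac{(N-p)p}{p-\beta}}\le\zeta^p$, eliminates the middle term outright, which the paper instead treats by its own sub-cases. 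What each approach buys: yours is shorter and regime-free, with weighted Young replacing the small/middle-$b$ thresholds and making the exponent sharpness at $r\to\infty$ transparent; the paper's is more elementary (only monomial comparisons) at the cost of duplicating the bookkeeping in two regimes. Two cosmetic points to add when writing it up: dispose of $r=0$ separately (the left side vanishes if $a=0$, and the first right-hand term is $+\infty$ otherwise), and note that, as in the paper, the final constant $C$ depends on $\varepsilon_0$ through $C_\delta$, the tuned smallness parameter being $\zeta$ (the $\eta$ of the statement).
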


\begin{proof}
We follow the arguments as those in \cite[Lemma B.1]{FZ22}. Note that \eqref{nis} immediately follows from \eqref{nif}, so it suffices to prove \eqref{nif}. When $r=0$, \eqref{nif} holds obviously since $0<\beta<p$. Then we distinguish several cases.

$\bullet$ {\em \color{blue} Case 1: $0< r\leq 1$}. In this case, up to changing the values of $\varepsilon_0$ and $\zeta$ by a universal constant, \eqref{nif} is equivalent to
\begin{align}\label{ni1e}
    a^2\zeta^p r^{\frac{p(1-\beta)}{p-1}}
    +a^2 \varepsilon^p b^p
    +a^{2-p}b^p
    \leq \varepsilon_0    r^{-\beta} a^2
    +C
    \left(r^{\frac{1-\beta}{p-1}}+\varepsilon b\right)^{p-2}b^2.
    \end{align}
    Note that:

    \noindent{-if } $\varepsilon b\leq \left(\frac{\varepsilon_0}{3}\right)^{\frac{1}{p}}r^{\frac{1-\beta}{p-1}}$
    then $a^2\varepsilon^p b^p\leq \frac{\varepsilon_0}{3}r^{-\beta} a^2$ since $\beta<p$;

    \noindent{-if } $\varepsilon b> \left(\frac{\varepsilon_0}{3}\right)^{\frac{1}{p}}r^{\frac{1-\beta}{p-1}}$
    then, since $\varepsilon a \leq \zeta \left(1+r^{\frac{p-\beta}{p-1}}\right)^{-\frac{N-p}{p-\beta}}\leq 2\zeta$,
    \[
    a^2 \varepsilon^p b^p \leq 4\zeta^2\varepsilon^{p-2} b^p
    \leq C
    \left(r^{\frac{1-\beta}{p-1}}+\varepsilon b\right)^{p-2}b^2.
    \]
    Similarly:

    \noindent{-if } $b\leq \left(\frac{\varepsilon_0}{3}\right)^{\frac{1}{p}}a r^{-\frac{\beta}{p}}$
    then $a^{2-p} b^p\leq \frac{\varepsilon_0}{3}r^{-\beta} a^2$;

    \noindent{-if } $\left(\frac{\varepsilon_0}{3}\right)^{\frac{1}{p}}a r^{-\frac{\beta}{p}}
    <b<\varepsilon^{-1}r^{\frac{1-\beta}{p-1}}$ then
    \[
    a^{2-p} b^p
    \leq \left(\frac{\varepsilon_0}{3}\right)^{-\frac{2-p}{p}} r^{\frac{\beta(2-p)}{p}} b^2
    \leq C r^{\frac{(1-\beta)(p-2)}{p-1}}b^2
    \leq C
    \left(r^{\frac{1-\beta}{p-1}}+\varepsilon b\right)^{p-2}b^2;
    \]

    \noindent{-if } $b\geq \varepsilon^{-1}r^{\frac{1-\beta}{p-1}}$
    then, since $\varepsilon a \leq \zeta \left(1+r^{\frac{p-\beta}{p-1}}\right)^{-\frac{N-p}{p-\beta}}\leq 2\zeta$,
    \[
    a^{2-p} b^p
    \leq 2^{2-p}\zeta^{2-p}\varepsilon^{p-2} b^p
    \leq 4^{2-p}\zeta^{2-p}
    \left(r^{\frac{1-\beta}{p-1}}+\varepsilon b\right)^{p-2}b^2.
    \]
    Thus, choosing $\zeta^p\leq \frac{\varepsilon_0}{3}$, \eqref{ni1e} holds in all cases.

    $\bullet$ {\em \color{blue} Case 2: $ r>1$}. In this case, up to changing the values of $\varepsilon_0$ and $\zeta$ by a universal constant, \eqref{nif} is equivalent to
    \begin{align}\label{nise}
    & r^{\frac{(p-N)(p^*_\beta-2)}{p-1}-\beta}
    a^2\zeta^p
    +r^{\frac{(p-N)(p^*_\beta-2-p)}{p-1}+p-\beta}a^2 \varepsilon^p b^p
    +r^{\frac{(p-N)(p^*_\beta-2)}{p-1}+p-\beta}a^{2-p}b^p
    \nonumber \\
    \leq & \varepsilon_0  r^{\frac{(p-N)(p^*_\beta-2)}{p-1}-\beta}a^2
    +C r^{-\frac{p-\beta}{p-1}}
    \left(r^{\frac{1-N}{p-1}}+\varepsilon b\right)^{p-2}b^2.
    \end{align}
    Again:

    \noindent{-if } $\varepsilon b\leq \left(\frac{\varepsilon_0}{3}\right)^{\frac{1}{p}}r^{\frac{1-N}{p-1}}$
    then
    \[
    r^{\frac{(p-N)(p^*_\beta-2-p)}{p-1}+p-\beta}a^2 \varepsilon^p b^p
    \leq \frac{\varepsilon_0}{3}  r^{\frac{(p-N)(p^*_\beta-2)}{p-1}-\beta}a^2;
    \]

    \noindent{-if } $\varepsilon b> \left(\frac{\varepsilon_0}{3}\right)^{\frac{1}{p}}r^{\frac{1-N}{p-1}}$,
    then we apply the inequality $\varepsilon a \leq \zeta \left(1+r^{\frac{p-\beta}{p-1}}\right)^{-\frac{N-p}{p-\beta}}\leq 2\zeta r^{\frac{p-N}{p-1}}$ to conclude that
    \[
    r^{\frac{(p-N)(p^*_\beta-2-p)}{p-1}+p-\beta}a^2 \varepsilon^p b^p
    \leq 4  r^{-\frac{p-\beta}{p-1}} \zeta^2 (\varepsilon b)^{p-2} b^2
    \leq C r^{-\frac{p-\beta}{p-1}}
    \left(r^{\frac{1-N}{p-1}}+\varepsilon b\right)^{p-2}b^2.
    \]
    On the other hand:

    \noindent{-if } $b\leq \left(\frac{\varepsilon_0}{3}\right)^{\frac{1}{p}}a r^{-1}$
    then
    \[
    r^{\frac{(p-N)(p^*_\beta-2)}{p-1}+p-\beta}a^{2-p}b^p
    \leq \frac{\varepsilon_0}{3}  r^{\frac{(p-N)(p^*_\beta-2)}{p-1}-\beta}a^2;
    \]
    \noindent{-if } $\left(\frac{\varepsilon_0}{3}\right)^{\frac{1}{p}}a r^{-1}
    <b<\varepsilon^{-1}r^{\frac{1-N}{p-1}}$ then
    \begin{align*}
    r^{\frac{(p-N)(p^*_\beta-2)}{p-1}+p-\beta}a^{2-p}b^p
     \leq & Cr^{\frac{(p-N)(p^*_\beta-2)}{p-1}+2-\beta}b^2
     \\ = & C r^{-\frac{p-\beta}{p-1}}r^{\frac{(1-N)(p-2)}{p-1}}b^2
    \leq C
    r^{-\frac{p-\beta}{p-1}}
    \left(r^{\frac{1-N}{p-1}}+\varepsilon b\right)^{p-2}b^2;
    \end{align*}

    \noindent{-if } $b\geq \varepsilon^{-1}r^{\frac{1-N}{p-1}}$
    then, since $\varepsilon a \leq \zeta \left(1+r^{\frac{p-\beta}{p-1}}\right)^{-\frac{N-p}{p-\beta}}\leq 2\zeta r^{\frac{p-N}{p-1}}$,
    \[
    r^{\frac{(p-N)(p^*_\beta-2)}{p-1}+p-\beta}a^{2-p}b^p
    \leq 2^{2-p}\zeta^{2-p}r^{-\frac{p-\beta}{p-1}}
    (\varepsilon b)^{p-2} b^2
    \leq C
    r^{-\frac{p-\beta}{p-1}}
    \left(r^{\frac{1-N}{p-1}}+\varepsilon b\right)^{p-2}b^2.
    \]
    This proves \eqref{nise} whenever $\zeta^p\leq \frac{\varepsilon_0}{3}$, concluding the proof of \eqref{nif}.
    \end{proof}

\subsection*{Acknowledgments}
The authors have been supported by National Natural Science Foundation of China 11971392.

    \end{document}